\theoremstyle{plain}
\newtheorem{theorem}{Theorem}[section]
\newtheorem{lemma}{Lemma}[section]
\newtheorem{corollary}{Corollary}[section]
\newtheorem{proposition}{Proposition}[section]
\theoremstyle{definition}
\newtheorem{definition}{Definition}[section]
\newtheorem{example}{Example}[section]
\theoremstyle{remark}
\newtheorem{remark}{Remark}[section]
\numberwithin{equation}{section}
\crefname{pluralequation}{Eqs.}{Eqs.}
\Crefname{pluralequation}{Eqs.}{Eqs.}
\newcommand{\lb}{[\![}
\newcommand{\rb}{]\!]}
\newcommand{\Cour}[1]{\lb #1\rb}
\newcommand{\la}{\langle}
\newcommand{\ra}{\rangle}
\newcommand{\mf}{\mathfrak}
\newcommand{\mbb}{\mathbb}
\newcommand{\mbf}{\mathbf}
\newcommand{\mc}{\mathcal}
\newcommand{\on}{\operatorname}
\newcommand{\Lied}{\mathcal{L}}
\newcommand{\g}{\mathfrak{g}}
\newcommand{\h}{\mathfrak{h}}
\renewcommand{\d}{\mathrm{d}}
\newcommand{\gr}{\on{gr}}
\newcommand{\ann}{\on{ann}}
\newcommand{\mmat}[2][3em]{\matrix (#2) [matrix of math nodes, row sep=#1,
  column sep=#1, text height=1.5ex, text depth=0.25ex]}
\tikzset{node distance=2cm, auto}
\title{Pseudo-Dirac structures}
\author{David Li-Bland}
\begin{document}
\begin{abstract}
A Dirac structure is a Lagrangian subbundle of a Courant algebroid, $L\subset\mbb{E}$, which is involutive with respect to the Courant bracket. In particular, $L$ inherits the structure of a Lie algebroid. In this paper, we introduce the more general notion of a pseudo-Dirac structure: an arbitrary subbundle, $W\subset\mbb{E}$, together with a \emph{pseudo-connection} on its sections, satisfying a natural integrability condition. As a consequence of the definition, $W$ will be a Lie algebroid. Allowing non-isotropic subbundles of $\mbb{E}$ incorporates non-skew tensors and connections into Dirac geometry. Novel examples of pseudo-Dirac structures arise in the context of quasi-Poisson geometry, Lie theory, generalized K\"ahler geometry, and Dirac Lie groups, among others. Despite their greater generality, we show that pseudo-Dirac structures share many of the key features of Dirac structures. In particular, they behave well under composition with Courant relations.
\end{abstract}
\maketitle
\tableofcontents

\section{Introduction}
Dirac structures were first introduced by Courant and Weinstein \cite{Courant:tm} in their study of constrained mechanical systems. They were further generalized in \cite{ManinTriplesBi}, and have since found applications in myriad settings, from generalized complex geometry \cite{Gualtieri:2007wh}, to the theory of moment maps \cite{Bursztyn:2009wi}.

In their most basic form, Dirac structures encode a Hamiltonian structure on a manifold $M$ as a \emph{Lagrangian} subbundle 
$L\subset \mbb{T}M$
of the \emph{Pontryagin} bundle $\mbb{T}M=TM\oplus T^*M$, satisfying some integrability condition. By \emph{Lagrangian}, we mean that $L=L^\perp$ with respect to the natural symmetric pairing 
$$\la(X,\alpha),(Y,\beta)\ra=\la\alpha,Y\ra+\la\beta,X\ra,\quad (X,\alpha),(Y,\beta)\in TM\oplus T^*M.$$
In particular, any two-form $\omega\in\Omega^2(M)$ corresponds to the Lagrangian subbundle
$$\gr(\omega^\flat):=\big\{\big(X,\omega(X,\cdot)\big)\mid X\in TM\big\}\subset \mbb{T}M,$$
while any bivector field $\pi\in\mf{X}^2(M)$  corresponds to the Lagrangian subbundle
$$\gr(\pi^\sharp):=\big\{\big(\pi(\alpha,\cdot),\alpha\big)\mid \alpha\in T^*M\big\}\subset \mbb{T}M.$$
Arbitrary Lagrangian subbundles of $\mbb{T}M$ interpolate between these two extremes. 

  Courant described a bracket, which is usually referred to as the \emph{Courant bracket} in the literature,\footnote{In fact, this bracket was introduced by Irene Dorfman in the context of two dimensional variational problems \cite{Dorfman:1993us}. Courant worked with its skew symmetrization instead.}  
 \begin{equation}\label{eq:DorfBrk}\Cour{(X,\alpha),(Y,\beta)}=\big([X,Y],\Lied_X\beta-\iota_Yd\alpha\big)\end{equation}
  on the space of sections $\Gamma(\mbb{T}M)$. 
  A \emph{Dirac structure} is defined to be a Lagrangian subbundle $L\subseteq \mbb{T}M$ which is involutive with respect the Courant bracket. As examples, $\gr(\omega^\flat)$ is a Dirac structure if and only if $\omega$ defines a presymplectic structure on $M$, while $\gr(\pi^\sharp)$ is a Dirac structure if and only if $\pi$ defines a Poisson structure on $M$.

In general, the Courant bracket \labelcref{eq:DorfBrk} does not define a Lie bracket  on the sections of $\mbb{T}M$, since it fails to be skew symmetric. 
In order to obtain a Lie bracket from the Courant bracket, Courant restricted the bracket to sections of Lagrangian subbundles $L\subseteq \mbb{T}M$, which ensures skew symmetry of the bracket. In this paper, we will take a different approach to obtain a Lie bracket from the Courant bracket: we modify the Courant bracket itself.


We introduce the notion of a \emph{pseudo-Dirac structure} in the Courant algebroid $\mbb{T}M$: a subbundle $$W\subseteq \mbb{T}M$$ together with a map $$\nabla:\Omega^0(M,W)\to\Omega^1(M,W^*)$$ satisfying certain axioms, including
\begin{subequations}\label[pluralequation]{eq:PsConIntAll}
\begin{align}
\label{eq:PsConInt}\nabla f\sigma&=f\nabla\sigma+\d\!f\otimes \la\sigma,\cdot\ra,&\sigma\in\Gamma(W),\\
\label{eq:derPairInt}\d\la\sigma,\tau\ra&=\la\nabla\sigma,\tau\ra+\la\sigma,\nabla\tau\ra,&\sigma,\tau\in\Gamma(W).
\end{align}
\end{subequations}
Since \cref{eq:PsConIntAll} reduce to the definition of a metric connection when $\la\cdot,\cdot\ra\rvert_W$ is non-degenerate, we call $\nabla$ a \emph{pseudo-connection}. 
\Cref{eq:derPairInt} guarantees that the modification to the Courant bracket, given by 
\begin{equation}\label{eq:ModBrkInt}[\sigma,\tau]:=\Cour{\sigma,\tau}-\la\nabla\sigma,\tau\ra,\end{equation}
is skew symmetric. 
More importantly, in \cref{thm:LieSubIsVBDir}, we prove that \cref{eq:ModBrkInt} endows $W$ with the structure of a Lie algebroid. In particular, the sections of $W$ form a Lie algebra.

Since pseudo-Dirac structures $W\subseteq \mbb{T}M$ need not be Lagrangian subbundles, they are able to encode more data than Dirac structures. 
 As a simple example, which arises in the context of generalized K\"ahler geometry \cite{Gualtieri:2004wh}, suppose $g\in S^2(TM)$ is a pseudo-Riemannian metric on $M$ then
$$\gr(\omega^\flat+g^\flat):=\big\{\big(X,\omega(X,\cdot)+g(X,\cdot)\big)\mid X\in TM\big\}$$
is never a Dirac structure, but it
is a pseudo-Dirac structure with respect to the Levi-Civita connection if and only if $\omega$ is closed. More generally, it is a pseudo-Dirac structure with respect to a given metric connection, $\nabla$, if and only if its torsion is $2\d\omega$ (c.f. \cref{ex:MetCon}). 

As a second example, suppose that  $\g$ is a Lie algebra endowed with a non-degenerate invariant quadratic form, and $\rho:\g\times M\to M$ is an action of $\g$ with coisotropic stabilizers; that is, the stabilizer $\g_m:=\on{ker}\big(\rho(\cdot,m)\big)$ of any point $m\in M$ satisfies $\g_m^\perp\subseteq \g_m$. Then 
$T^*M\subset \mbb{T}M$ is a pseudo-Dirac structure for the pseudo-connection $$\nabla \alpha=\sum_i \rho(\xi_i) \d\alpha\big(\rho(\xi^i)\big),\quad \alpha\in\Omega^1(M),$$
where $\{\xi_i\}\subset\g$ and $\{\xi^i\}\subset\g$ are basis in duality.
This scenario occurs, for instance, when $M=\mc{L}_\g$ is the variety of Lagrangian (or coisotropic) subalgebras of $\g$, or when $M=G/P$ is a flag variety (here $P$ is a parabolic subgroup of a semi-simple algebraic group, $G$).

Further examples of pseudo-Dirac structures include quasi-Poisson structures, action Courant algebroids, and Lie algebroid structures on $T^*M$.

 One of the key aspects of Courant's framework is the ability to impose constraints on a Dirac structure. More precisely, he described a procedure to \emph{restrict}\footnote{Courant calls this procedure \emph{reduction} in \cite{Courant:1990uy}. We refer to it as \emph{restriction} (as is also done in \cite{Jotz:2008wn}, for instance) or \emph{pull-back} (as is done in \cite{LiBland:2009ul}, for instance) to distinguish it from more general reduction procedures.} a Dirac structure $L\subseteq \mbb{T}M$ to a submanifold $S\subseteq M$, 
 \begin{equation}\label{eq:restProc}\big(L\subseteq \mbb{T}M\big)\to \big(L_S\subseteq \mbb{T}S),\end{equation} where $L_S\subseteq \mbb{T}S$ is computed by the formula 
\begin{equation}\label{eq:restForm}L_S:=\frac{L\cap TS\oplus T^*M\rvert_S}{L\cap \ann(TS)}\subseteq \mbb{T}S.\end{equation}
 Courant showed that  \cref{eq:restForm} describes a Dirac structure (under some cleanness assumptions).

\Cref{prop:clasLagTDSB} states that  subbundles of $\mbb{T}M$ endowed with a pseudo-connection are in one-to-one correspondence with Lagrangian subbundles of the Courant algebroid $\mbb{T}TM$ which are linear with respect to the vector bundle structure on $TM$.
 Thus it is quite natural to study them. Similarly, pseudo-Dirac structures in $\mbb{T}M$ are in one-to-one correspondence with (linear) Dirac structures in $\mbb{T}TM$.

As a consequence, it is easy to impose constraints on a pseudo-Dirac structure in $\mbb{T}M$:
Suppose that $S\subseteq M$ is the constraint submanifold. Courant's restriction procedure \labelcref{eq:restProc} allows us to restrict Dirac structures in $\mbb{T}TM$ to Dirac structure in $\mbb{T}TS$:
\begin{center}
\begin{tikzpicture}
\node (B) at (-4,-1) [text width=4cm,align=center] {Dirac structures in $\mbb{T}TM$};
\node (D) at (4,-1) [text width=4cm,align=center] {Dirac structures in $\mbb{T}TS$};
\draw [->] (B) edge node  [text width=4cm,align=center,below] {Courant's restriction procedure \labelcref{eq:restProc}} (D);
\end{tikzpicture}
\end{center} 
Composing this restriction procedure with the equivalence between (linear) Dirac structures in $\mbb{T}TM$ and pseudo-Dirac structures in $\mbb{T}M$ allows us to impose constraints on pseudo-Dirac structures in $\mbb{T}M$:
\begin{center}
\begin{tikzpicture}
\node (A) at (-4,1) [text width=4cm,align=center] {pseudo-Dirac structures in $\mbb{T}M$};
\node (B) at (-4,-1) [text width=4cm,align=center] {(linear) Dirac structures in $\mbb{T}TM$};
\node (C) at (4,1) [text width=4cm,align=center] {pseudo-Dirac structures in $\mbb{T}S$};
\node (D) at (4,-1) [text width=4cm,align=center] {(linear) Dirac structures in $\mbb{T}TS$};
\draw [->] (B) edge node  [text width=4cm,align=center,below] {Courant's restriction procedure \labelcref{eq:restProc}} (D);
\draw [->,dashed] (A) edge node [text width=4cm,align=center] {restriction procedure for pseudo-Dirac structures} (C);
\draw [<->] (A) edge (B);
\draw [<->] (C) edge (D);
\end{tikzpicture}
\end{center}

In this way, pseudo-Dirac structures share the key features of Dirac structures, while encoding a greater variety of structures. Moreover, we should remark that while in the introduction we only described pseudo-Dirac structures in the standard Courant algebroid, $\mbb{T}M$, in the bulk of the paper, we will describe pseudo-Dirac structures in arbitrary Courant algebroids.

The structure of this paper is as follows. In \cref{sec:Pre} we recall some background material, including the general definition of a Courant algebroid structure on a vector bundle, $\mbb{E}\to M$. In \cref{sec:PseuDir} we define pseudo-Dirac structures in $\mbb{E}$, and describe a number of examples. In \cref{sec:LieSubAlg}, we relate pseudo-Dirac structures in $\mbb{E}$ with Dirac structures in $T\mbb{E}$. Finally, in \cref{sec:FBimage} we describe the behavior of pseudo-Dirac structures under composition with Courant relations, and describe further examples.

\subsection{Acknowledgements}
D. Li-Bland would like to thank Eckhard Meinrenken for his advice throughout this project, as well as Marco Gualtieri, Mathieu Stienon and Madeleine Jotz for helpful conversations. D. Li-Bland was supported in part by a NSERC CGS-D Grant as well as by NSF Award No. DMS-1204779.

\section{Preliminaries}\label{sec:Pre}

\subsection{Relations}\label{sec:LinRel}

A smooth relation $S\colon M_1\dasharrow M_2$ between manifolds is an immersed submanifold 
$S\subseteq M_2\times M_1$. The \emph{transpose relation} $S^\top \colon M_2\dasharrow M_1$ consists of all $(m_1,m_2)$ such that 
$(m_2,m_1)\in R$.

We will write $$m_1\sim_S m_2,$$ if $(m_2,m_1)\in S\subseteq M_2\times M_1$, and for functions $f_i\in C^\infty(M_i)$, we will write $$f_1\sim_S f_2$$ if $f_2\oplus(-f_1)$ vanishes on $S$.
\begin{example}
The identity relation $M_\Delta: M\dasharrow M$ is given by the diagonal 
$$M_\Delta:=\{(m,m)\in M\times M\mid m\in M\}.$$
\end{example}

Given smooth relations $S\colon M_1\dasharrow M_2$ and 
$S'\colon M_2\dasharrow M_3$, the set-theoretic composition $S'\circ S$ is the image of 
\begin{equation}\label{eq:intersection}
 S'\diamond S=(S'\times S)\cap (M_3\times (M_2)_\Delta \times M_1)
 \end{equation}
under projection to $M_3\times M_1$. Here $(M_2)_\Delta\subset M_2\times M_2$ denotes the diagonal.
\subsubsection{$\mc{VB}$-relations}
If $V_1,V_2$ are two vector bundles over $M_1$ and $M_2$ respectively, then a \emph{$\mc{VB}$-relation} $R:V_1\dasharrow V_2$ is a subbundle
$R\subseteq V_2\times V_1$  along a submanifold $S\subset M_2\times M_1$.

We define $\ker(R)\subseteq p_{M_1}^*V_1,$ and $\on{ran}(R)\subseteq p_{M_2}^*V_2$ to be the kernel and range of the bundle map $$R\to p_{M_2}^*V_2,\ 
(v_2,v_1)\mapsto v_2$$ (where $p_{M_i}\colon S\to M_i,\ (m_2,m_1)\mapsto m_i$). 
If $\sigma_i\in\Gamma(V_i)$, then we write $$\sigma_1\sim_R\sigma_2$$ whenever $(\sigma_2,\sigma_1)\rvert_S\in \Gamma(R)$.

We define the $\mc{VB}$-relation $\ann^\natural(R):V_1^*\dasharrow V_2^*$ by $$\ann^\natural(R)=\{(\mu_2,-\mu_1)\mid (\mu_2,\mu_1)\in \ann(R)\}\subseteq V_2^*\times V_1^*.$$

We will frequently find the following Lemma useful.
\begin{lemma}[\cite{LiBland:2011vqa}]\label{lem:ann}
For any relations $R\colon V_1\dasharrow	 V_2$ and $R'\colon V_2\dasharrow V_3$, 
one has  
$$\ann^\natural(R'\circ R)=\ann^\natural(R')\circ \ann^\natural(R).$$ 
\end{lemma}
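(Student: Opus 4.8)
The plan is to reduce the statement to a pointwise claim in linear algebra and then to prove that claim using the behaviour of the annihilator under the three elementary operations out of which composition of relations is built. First I would observe that $\ann^\natural$ leaves the base submanifold unchanged: $\ann^\natural(R)$ is a relation $V_1^*\dasharrow V_2^*$ along the same $S\subseteq M_2\times M_1$ as $R$, and since $\ann^\natural$ is a fibrewise linear isomorphism onto $\ann(R)$, its fibre rank is the corank of $R$. Consequently, under the cleanness hypotheses that make $R'\circ R$ a genuine $\mc{VB}$-relation, both sides of the asserted identity are relations along the single composed base $S'\circ S$, and it suffices to compare their fibres over a fixed point. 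Fixing compatible base points reduces the problem to the purely linear statement for linear relations $R\subseteq V_2\times V_1$ and $R'\subseteq V_3\times V_2$ between the fibres.

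For the linear statement I would realize the composition through the diamond construction of \cref{eq:intersection}: $R'\circ R$ is the image of $A\cap D$ under the projection $p\colon V_3\times V_2\times V_2\times V_1\to V_3\times V_1$, where $A=R'\times R$ and $D=V_3\times (V_2)_\Delta\times V_1$. I then apply three standard annihilator identities: $\ann(A)=\ann(R')\times\ann(R)$; $\ann(A\cap D)=\ann(A)+\ann(D)$, where $\ann(D)=\{(0,\beta,-\beta,0)\}$ is the antidiagonal in the two middle factors; and $\ann\big(p(A\cap D)\big)=(p^*)^{-1}\ann(A\cap D)$, with $p^*(\xi_3,\xi_1)=(\xi_3,0,0,\xi_1)$. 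Combining these, a covector $(\alpha_3,\alpha_1)$ lies in $\ann(R'\circ R)$ exactly when there is some $\alpha_2\in V_2^*$ with $(\alpha_3,\alpha_2)\in\ann(R')$ and $(-\alpha_2,\alpha_1)\in\ann(R)$.

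It then remains to carry out the sign bookkeeping imposed by the $\natural$. Unwinding the definition, $(\nu_3,\nu_1)$ lies in $\ann^\natural(R')\circ\ann^\natural(R)$ iff there is $\nu_2$ with $(\nu_2,-\nu_1)\in\ann(R)$ and $(\nu_3,-\nu_2)\in\ann(R')$; the substitution $\nu_2=-\alpha_2$ turns this into precisely the description of $\ann^\natural(R'\circ R)$ obtained above, finishing the proof. The inclusion $\supseteq$ is essentially formal: given a mediating $\nu_2$, one checks $\langle\nu_1,v_1\rangle=\langle\nu_2,v_2\rangle=\langle\nu_3,v_3\rangle$ along any $(v_3,v_1)\in R'\circ R$. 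The genuine content, and the step I expect to be the main obstacle, is the reverse inclusion, namely the \emph{existence} of the mediating covector $\nu_2$; this is exactly what $\ann(A\cap D)=\ann(A)+\ann(D)$ supplies, and it is the place where finite-dimensional duality — and, globally, the constant-rank/cleanness assumption ensuring $A\cap D$ is a subbundle — is indispensable.
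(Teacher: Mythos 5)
Your argument is correct. Note, however, that the paper itself does not prove \cref{lem:ann}; it only cites \cite{LiBland:2011vqa}, so there is no internal proof to compare against. Your reduction to a fibrewise linear-algebra statement, followed by the three annihilator identities $\ann(R'\times R)=\ann(R')\times\ann(R)$, $\ann(A\cap D)=\ann(A)+\ann(D)$, and $\ann\big(p(A\cap D)\big)=(p^*)^{-1}\ann(A\cap D)$ applied to the diamond construction \labelcref{eq:intersection}, is exactly the standard route, and your sign bookkeeping for the $\natural$ (the substitution $\nu_2=-\alpha_2$) checks out: the existence of the mediating covector is indeed supplied by $\ann(A\cap D)=\ann(A)+\ann(D)$, which is where finite-dimensionality enters. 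The one point worth stating explicitly is the caveat you already gesture at: as a set-theoretic identity the fibre of $R'\circ R$ over a point of $S'\circ S$ could a priori be a union of linear compositions over several mediating base points $m_2$, and $\ann$ does not commute with unions; so the clean/constant-rank hypotheses under which $R'\circ R$ is a genuine $\mc{VB}$-relation along $S'\circ S$ are not merely for smoothness but are what legitimizes the pointwise reduction. With that understood, your proof is complete.
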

The proof can be found in \cite{LiBland:2011vqa}.

\begin{example}\label{ex:GrAdd}
Let $p:V\to M$ be a vector bundle. Then the $\mc{VB}$-relation $$\gr(+):V\times V\dasharrow V$$ defined by $$(v_1,v_2)\sim_{\gr(+)} v_1+v_2,$$ whenever $p(v_1)=p(v_2)$ is called the \emph{graph of addition}. A (fibrewise) linear function $f \in C^\infty(V)$ satisfies  $$f\oplus f\sim_{\gr(+)}f$$ while a  fibrewise constant function satisfies $$f\oplus 0\sim_{\gr(+)}f.$$
\end{example}

%

\subsection{Lie algebroids and Courant algebroids}

\subsubsection{Lie algebroids}
Lie algebroids, which are a common generalization of Lie algebras and the tangent bundle, will be an important concept in this paper. We recall their definition and basic properties, and refer the reader to \cite{moerdijk03,Mackenzie05} for more comprehensive expositions.

\begin{definition}
A \emph{Lie algebroid} is a vector bundle $A\to M$ together with a Lie bracket $[ \cdot,\cdot ]$
on its space of sections $\Gamma(A)$ and a 
bundle map $\mbf{a}:A\to TM$ called the \emph{anchor map} 
%
such that the following Leibniz identity is satisfied:
$$[\sigma,f\tau]=(\mbf{a}(\sigma)\cdot f)\tau+f[\sigma,\tau],\quad \sigma,\tau\in\Gamma(A), f\in C^\infty(M).$$

\begin{remark}
Note that the anchor map is determined by the Lie bracket. Indeed $$(\mbf{a}(\sigma)\cdot f)\sigma=[\sigma,f\sigma], \quad \sigma\in\Gamma(A),f\in C^\infty(M).$$ Additionally, the anchor map intertwines the Lie brackets:
$$\mbf{a}[\sigma,\tau]=[\mbf{a}(\sigma),\mbf{a}(\tau)].$$
\end{remark}

\end{definition}

\begin{example}[The tangent bundle]\label{ex:TangBundLie}
The tangent bundle $A=TM\to M$ is a Lie algebroid, where the bracket on $\Gamma(TM)=\mf{X}(M)$ is the Lie bracket of vector fields, and the anchor map $\mbf{a}:TM\to TM$ is the identity map.

\end{example}

\begin{example}[Lie algebras and action Lie algebroids]
Any Lie algebra $\g$, regarded as a vector bundle over the point, is a Lie algebroid. 

More generally, if $\g$ acts on a manifold $M$ via the Lie algebra morphism $\rho:\g\to \mf{X}(M)$, then $\g\times M$ is a Lie algebroid with bracket 
\begin{equation}
\label{eq:actionlie} [\xi_1,\xi_2]_{\g\times M}=[\xi_1,\xi_2]_\g+\Lied_{\rho(\xi_1)}\xi_2-\Lied_{\rho(\xi_2)}\xi_1,\quad \xi_1,\xi_1\in\Gamma(\g\times M)\cong C^\infty(M,\g).\end{equation} The anchor map $\mbf{a}:\g\times M\to TM$ is defined to extend the map $\rho:\g\to\mf{X}(M)$ on constant sections. With this structure, $\g\times M$ is called the \emph{action Lie algebroid} for the action of $\g$ on $M$.
\end{example}

Next we recall the notion of a Lie subalgebroid of a Lie algebroid, due to Higgins and Mackenzie \cite{Higgins:1990gq}.

\begin{definition}\label{def:SubLA}
Let $A\to M$ be a Lie algebroid. A subbundle 
$B\subseteq A$ over a submanifold $S\subseteq M$ is called a Lie subalgebroid if for any two sections 
$\sigma,\tau\in\Gamma(A)$ satisfying $\sigma\rvert_S,\tau\rvert_S\in \Gamma(B),$
$$[\sigma,\tau]\rvert_S\in\Gamma(B).$$

\end{definition}

\begin{definition}\label{def:MorphLA}
Suppose that $A_1\to M_1$ and $A_2\to M_2$ are two Lie algebroids.
\begin{itemize}
 \item A morphism of vector bundles $\Phi:A_1\to A_2$ is called a Lie algebroid morphism if its graph $\on{gr}(\Phi)\subset A_2\times A_1$ is a Lie subalgebroid \cite{Higgins:1990gq}.

\item Suppose $\Psi:\psi^* A_2\to A_1$ is a base-preserving map of vector bundles, where $\psi^*A_2$ is the pullback of $A_2$ along a map $\psi:M_1\to M_2$. If $\on{gr}(\Psi)\subset A_2\times A_1$ is a Lie subalgebroid, then the vector bundle relation $\gr(\Psi):A_1\dasharrow A_2$ is said to be a \emph{comorphism} of Lie algebroids \cite{Higgins:1993bc}.

\item More generally, a relation $R:A_1\dasharrow A_2$ is called an $\mc{LA}$ relation if $R\subseteq A_2\times A_1$ is a Lie subalgebroid. In particular, for any $\sigma_i,\tau_i\in\Gamma(A_i)$ satisfying $\sigma_1\sim_R\sigma_2$ and $\tau_1\sim_R\tau_2$, we have
$$[\sigma_1,\tau_1]\sim_R[\sigma_2,\tau_2].$$
\end{itemize}

%

%
%
%
\end{definition}

\subsubsection{Courant algebroids}
Dirac structures were introduced by Courant \cite{Courant:1990uy} as a unified framework from which to study constrained mechanical systems. Liu-Weinstein-Xu \cite{ManinTriplesBi}
generalized Courant's original set-up, replacing $\mbb{T}M$ with a more general notion of a \emph{Courant algebroid}
$\mbb{E}\to M$.  We recall the basic theory, and refer the reader to \cite{Dorfman:1993us,LetToWein,Roytenberg99,Severa:2005vla,Roytenberg:2002,Uchino02,Bursztyn:2009wi} for more details.

\begin{definition}\label{def:CA}
A \emph{Courant algebroid} over a manifold $M$ is a vector bundle $\mbb{E}\to M$, together with a bundle 
map $\mbf{a}\colon \mbb{E}\to TM$ called the \emph{anchor}, a bundle metric\footnote{In this paper, we take 
`metric' to mean a non-degenerate symmetric bilinear form which is not necessarily positive definite.} $\la\cdot,\cdot\ra$, and 
 a bilinear bracket $\Cour{\cdot,\cdot}$ on its space of sections $\Gamma(\mbb{E})$. These are required 
to satisfy the following axioms, for all sections $\sigma_1,\sigma_2,\sigma_3\in\Gamma(\mbb{E})$:
\begin{enumerate}
\item[c1)] $\Cour{\sigma_1,\Cour{\sigma_2,\sigma_3}}=\Cour{\Cour{\sigma_1,\sigma_2},\sigma_3}
+\Cour{\sigma_2,\Cour{\sigma_1,\sigma_3}}$, 
\item[c2)] $\mbf{a}(\sigma_1)\la \sigma_2,\sigma_3\ra=\la \Cour{\sigma_1,\sigma_2},\,\sigma_3\ra+\la \sigma_2,\,\Cour{\sigma_1,\sigma_3}\ra$,
\item[c3)] $\Cour{\sigma_1,\sigma_2}+\Cour{\sigma_2,\sigma_1}=\mbf{a}^*(\d \la \sigma_1,\sigma_2\ra)$.
\end{enumerate}
Here $\mbf{a}^*\colon T^*M\to \mbb{E}^*\cong\mbb{E}$ is the dual map to $\mbf{a}$. The axioms c1)-c3) imply various other properties, 
in particular
\begin{enumerate}
\item[c4)] $\Cour{\sigma_1,f\sigma_2}=f\Cour{\sigma_1,\sigma_2}+\mbf{a}(\sigma_1)(f)\sigma_2$,
\item[c5)] $\Cour{f\sigma_1,\sigma_2}=f\Cour{\sigma_1,\sigma_2}-\mbf{a}(\sigma_2)(f)\sigma_1+\la \sigma_1,\sigma_2\ra \mbf{a}^*(\d f)$, 
\item[c6)] $\mbf{a}(\Cour{\sigma_1,\sigma_2})=[\mbf{a}(\sigma_1),\mbf{a}(\sigma_2)]$,
\end{enumerate}
for sections $\sigma_i\in\Gamma(\mbb{E})$ and functions $f\in C^\infty(M)$. We
will refer to the bracket $\Cour{\cdot,\cdot}$ as the \emph{Courant
bracket} (some authors refer to $\Cour{\cdot,\cdot}$ as the Dorfman bracket (after Dorfman \cite{Dorfman:1993us}, who introduced it) and its skew-symmetric part as the Courant bracket). 

For any Courant algebroid $\mbb{E}$, we denote by $\overline{\mbb{E}}$ the Courant algebroid with the same 
bracket and anchor, but with the bundle metric, $\la\cdot,\cdot\ra$, negated.\footnote{To see that axiom c3) holds for $\overline{\mbb{E}}$, it can be useful to rewrite it as $$\la\Cour{\sigma_1,\sigma_2}+\Cour{\sigma_2,\sigma_1},\sigma_3\ra=\mbf{a}(\sigma_3)\cdot\la \sigma_1,\sigma_2\ra,$$ an equation whose validity is manifestly preserved when the bundle metric is negated. Note that  we abused notation when writing the original equation, denoting  the composition $T^*M\xrightarrow{\mbf{a}^*}\mbb{E}^*\xrightarrow{\la\cdot,\cdot\ra}\mbb{E}$ simply by $\mbf{a}^*$. }

\end{definition}

A subbundle $E\subseteq \mbb{E}$ along a submanifold $S\subseteq M$ is called \emph{involutive} if it has the 
property 
\[ \sigma_1|_S,\ \sigma_2|_S\in\Gamma(E) 
\Rightarrow \Cour{\sigma_1,\sigma_2}|_S\in \Gamma(E),\]  
for any $\sigma_1,\sigma_2\in\Gamma(\mbb{E})$.
It is important to note that this does not define a bracket on sections of $E$, in general.

We let $L^\perp\subseteq \mbb{E}$ denote the orthogonal complement of $L$ with respect to the fibre metric.
A subbundle $L\subset \mbb{E}$ is called \emph{Lagrangian} if $L^\perp=L$, and coisotropic if $L^\perp\subseteq L$.
An involutive Lagrangian subbundle $E\subseteq \mbb{E}$  along $S\subseteq M$ is called
a \emph{Dirac structure along $S$}.   

A Dirac structure, $E$, along $S=M$ is
simply called a Dirac structure, and the pair $(\mbb{E},E)$ is called a \emph{Manin pair} \cite{PonteXu:08,Bursztyn:2009wi}. In this case, the restriction of the Courant bracket and the anchor map endows $E$ with the structure of a Lie algebroid \cite{Courant:1990uy,ManinTriplesBi}.
Dirac structures were introduced by
Courant \cite{Courant:1990uy} and Liu-Weinstein-Xu
\cite{ManinTriplesBi}. The notion of a Dirac structure along a
submanifold goes back to \v{S}evera \cite{LetToWein} and was developed
in \cite{Alekseev:2002tn,Bursztyn:2009wi,PonteXu:08}.

\begin{example}[The standard Courant algebroid]\label{ex:StdCourAlg}
The \emph{standard Courant algebroid} over $M$ is $\mbb{T} M=TM\oplus T^*M$ with anchor map the projection to the first factor and bilinear form 
$\la (X,\alpha),(Y,\beta)\ra=\la\beta,X\ra+\la \alpha,Y\ra$. The Courant bracket reads
\begin{equation}\label{eq:StdCourBrack} \Cour{(X,\alpha),(Y,\beta)}=([X,Y],\Lied_{X}\beta-\iota_{Y}d\alpha),\end{equation}
for vector fields $X,Y\in\mf{X}(M)$ and 1-forms $\alpha,\beta\in\Omega^1(M)$.

Both $TM$ and $T^*M$ are Dirac structures in $\mbb{T} M$. 

The standard Courant algebroid was introduced by Courant \cite{Courant:1990uy}.
\end{example}

\begin{example}[Poisson structures]\label{ex:StdCourAlgPoisSymp}
A manifold $M$ is called a \emph{Poisson manifold} if the space of functions $C^\infty(M)$ is equipped with a Lie bracket $$\{\cdot,\cdot\}:C^\infty(M)\times C^\infty(M)\to C^\infty(M)$$ satisfying the Leibniz rule, $$\{f,gh\}=\{f,g\}h+g\{f,h\}, \quad f,g,h\in C^\infty(M).$$
Consequently, for any function $f\in C^\infty(M)$, the operator $\{f,\cdot\}:C^\infty(M)\to C^\infty(M)$ is a derivation, and so it defines a vector field $X_f=\{f,\cdot\}$ on $M$ called the \emph{Hamiltonian vector field} associated to $f$. By skew symmetry the Poisson bracket is also a derivation in the first variable. 

Since $\{f,g\}$ depends only on the differentials $\d\! f$ and $\d g$, there exists a bivector field $\pi\in\mf{X}^2(M)$ such that \begin{equation}\label{eq:bivField}\{f,g\}=\pi(\d\! f,\d g).\end{equation} We let $\pi^\sharp:T^*M\to TM$ be the associated skew-symmetric map, that is $\pi^\sharp(\d\! f)=X_f$.

Given a bivector field, $\pi\in\mf{X}^2(M)$, then the graph $\on{gr}(\pi^\sharp)\subset \mbb{T}M$ of the associated skew symmetric map $\pi^\sharp:T^*M\to TM$ is a Dirac structure if and only if \cref{eq:bivField} defines a Poisson structure \cite{Courant:1990uy,Courant:tm}.

\end{example}

\begin{example}[Exact Courant algebroids]\label{ex:exactCA}
The theory of \emph{exact Courant algebroids} was developed by \v{S}evera \cite{Severa:2001}.
A Courant algebroid $\mbb{E}\to M$ is called \emph{exact}, if the sequence $$0\to T^*M\xrightarrow{\mbf{a}^*}\mbb{E}\xrightarrow{\mbf{a}}TM\to0$$ is  exact. In this case, any Lagrangian splitting $s:TM\to\mbb{E}$ of this sequence defines a closed 3-form $\gamma\in\Omega^3(M)$ by the formula 
\begin{equation}\label{eq:SplitExtGam}\gamma(X,Y,Z)=\la\Cour{s(X),s(Y)},s(Z)\ra,\quad X,Y,Z\in\Gamma(TM)\end{equation}
Additionally, the splitting defines a trivialization $$(\mbf{a}\times s^*):\mbb{E}\to TM\times_M T^*M,$$ which intertwines the bundle metric on $\mbb{E}$ with the natural pairing on $TM\oplus T^*M$. 

Under this identification, the Courant bracket becomes
\begin{equation}\label{eq:twistCourBrack} \Cour{(X,\alpha),(Y,\beta)}_\gamma=([X,Y],\Lied_{X}\beta-\iota_{Y}d\alpha+\iota_X\iota_Y\gamma),\end{equation}
for vector fields $X,Y\in\mf{X}(M)$ and 1-forms $\alpha,\beta\in\Omega^1(M)$.

More generally, for any closed 3-form $\gamma\in\Omega^3_{cl}(M)$, we shall denote the Courant algebroid on the pseudo-euclidean bundle $TM\oplus T^*M$ equipped with bracket \labelcref{eq:twistCourBrack} by $\mbb{T}_\gamma M$. Thus the splitting $s:TM\to \mbb{E}$ identifies $\mbb{E}\cong \mbb{T}_\gamma M$.

 Up to isomorphism, exact Courant algebroids are classified by the de Rham cohomology class $[\gamma]\in H^3_{dR}(M)$, often referred to as the \emph{\v{S}evera class} of the Courant algebroid.
\end{example}

\begin{example}[Quadratic Lie algebras]\label{ex:ActCourAlg}
A Lie algebra together with an invariant metric is called a \emph{quadratic Lie algebra}.
Courant algebroids over a point correspond to \emph{quadratic Lie algebras}.

Suppose $\mf{d}$ is a quadratic Lie algebra,
acting on a manifold $M$. Let $\rho\colon \mf{d}\times M\to TM$ be the
action map. Let $\mbb{E}=\mf{d}\times M$ with anchor map $\mbf{a}=\rho$ and with
the bundle metric coming from the metric on $\mf{d}$. As shown in
\cite{LiBland:2009ul}, the Lie bracket on constant sections $\mf{d}\subseteq
C^\infty(M,\mf{d})=\Gamma(\mbb{E})$ extends to a Courant bracket if and only if
the action has coisotropic stabilizers, i.e. $\ker(\rho(\cdot,m))\subseteq
\mf{d}$ satisfies $\ker(\rho(\cdot,m))^\perp\subseteq \ker(\rho(\cdot,m))$ for any $m\in M$. Explicitly, for $\sigma_1,\sigma_2\in \Gamma(\mbb{E})=C^\infty(M,\mf{d})$ the
Courant bracket reads (see \cite[$\mathsection$ 4]{LiBland:2009ul})
\begin{equation}
\label{eq:actioncourant} \Cour{\sigma_1,\sigma_2}=[\sigma_1,\sigma_2]+\Lied_{\rho(\sigma_1)}\sigma_2-\Lied_{\rho(\sigma_2)}\sigma_1+\rho^*\la \d\sigma_1,\sigma_2\ra.\end{equation}
Here $\rho^*\colon T^*M\to \mf{d}\times M$ is the dual map to the action map, 
using the metric to identify $\mf{d}^*\cong \mf{d}$. 
Note that the first three terms give the Lie algebroid bracket \labelcref{eq:actionlie} for the action Lie algebroid $\mf{d}\times M$. The correction term \begin{equation}\label{eq:ActCourCorr}\rho^*\la \d\sigma_1,\sigma_2\ra\end{equation} turns the Lie algebroid bracket into a 
Courant bracket. We refer to $\mf{d}\times M$ with bracket \labelcref{eq:actioncourant} as an \emph{action Courant algebroid}.
\end{example}

\subsubsection{Courant Morphisms and Relations}

\begin{definition}[Courant morphisms and relations \cite{Alekseev:2002tn}]\label{def:CARel}
Let $\mbb{E}_1,\mbb{E}_2$ be two Courant algebroids over $M_1$ and $M_2$, respectively. A \emph{Courant relation} $R:\mbb{E}_1\dasharrow\mbb{E}_2$ is a Dirac structure $R\subseteq \mbb{E}_2\times\overline{\mbb{E}_1}$ along a submanifold $S\subset M_2\times M_1$.
If $S$ is the graph of a map $M_1\to M_2$, then $R$ is called a \emph{Courant morphism}.
\end{definition}

As a consequence of the definition, if $\sigma_i\in \Gamma(\mbb{E}_i)$ and $\tau_i\in\Gamma(\mbb{E}_i)$ satisfy 
$\sigma_1\sim_R\sigma_2$, and $\tau_1\sim_R\tau_2$, then
\begin{equation*}\begin{split}
\Cour{\sigma_1,\tau_1}&\sim_R\Cour{\sigma_2,\tau_2},\\
\la\sigma_1,\tau_1\ra&\sim_R\la\sigma_2,\tau_2\ra.
\end{split}
\end{equation*}

\begin{example}\label{ex:diagDirStr}
Suppose $\mbb{E}$ is a Courant algebroid over $M$. Then the diagonal $\mbb{E}_\Delta\subseteq \mbb{E}\times\overline{\mbb{E}}$ is a Dirac structure with support along the diagonal $M_\Delta\subseteq M\times M$. The corresponding Courant relation, $$\mbb{E}_\Delta:\mbb{E}\dasharrow\mbb{E}$$ is just the identity map.
\end{example}

\begin{example}[Standard lift of a relation]\label{ex:StdDiracStr}
Let $S\subset M$ be an embedded submanifold. Then $TS\oplus \ann(TS)\subseteq \mbb{T} M$ is a Dirac structure along $S$. Moreover, if $S:M_1\dasharrow M_2$ is a relation, then $R_S:=TS\oplus\ann^\natural(TS)\subseteq \mbb{T}M_2\times \overline{\mbb{T} M_1}$ defines a Courant relation $$R_S:\mbb{T}_{\gamma_1}M_1\dasharrow \mbb{T}_{\gamma_2}M_2,$$
whenever $(\gamma_2\oplus-\gamma_1)\rvert_S=0$.
\end{example}

\begin{example}[{\cite[Proposition 1.6]{LiBland:2009ul}} the diagonal morphism]\label{ex:DiagMorph}
Suppose $\mbb{E}\to M$ is a Courant algebroid, then
\begin{equation}\label{eq:DiagMorph}R_{\on{diag}}:(\mbb{T}M,TM)\dasharrow (\mbb{E}\times\overline{\mbb{E}},\mbb{E}_\Delta)\end{equation}
is a Morphism of Manin pairs over the diagonal embedding $M\to M\times M$, where
\begin{equation}\label{eq:DiagMorph2}(v,\mu)\sim_{R_{\on{diag}}} (x,y)\Leftrightarrow v=\mbf{a}(x),\quad x-y=\mbf{a}^*\mu.\end{equation}
\end{example}

If two Courant relations $R_1:\mbb{E}_1\dasharrow \mbb{E}_2$ and $R_2:\mbb{E}_2\dasharrow\mbb{R}_3$ compose \emph{cleanly}, then their composition $R_2\circ R_1:\mbb{E}_1\dasharrow \mbb{R}_3$ is a Courant relation (see \cite[Proposition~1.4]{LiBland:2011vqa}). Furthermore: 

\begin{proposition}[{\cite[Proposition~1.4]{LiBland:2011vqa}}]
Suppose that $\mbb{E}\to M$ and $\mbb{F}\to N$ are Courant algebroids and
$R:\mbb{E}\dasharrow\mbb{F}$ is a Courant relation.
\begin{itemize}
\item If $E\subseteq \mbb{E}$ is a Dirac structure which composes cleanly with $R$ and the subbundle $F\subseteq \mbb{F}$ supported on all of $N$ satisfies $F=R\circ E$, then $F\subseteq \mbb{F}$ is a Dirac structure.
\item If $F\subseteq \mbb{F}$ is a Dirac structure which composes cleanly with $R$ and the subbundle $E\subseteq \mbb{E}$ supported on all of $M$ satisfies $E=F\circ R$, then $E\subseteq \mbb{E}$ is a Dirac structure.
\end{itemize}
\end{proposition}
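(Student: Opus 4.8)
The plan is to deduce both statements from the clean-composition theorem for Courant relations quoted immediately above, by reinterpreting a Dirac structure supported on the whole base as a Courant relation to (or from) the trivial Courant algebroid over a point. Write $\mbf{0}$ for the zero vector bundle over a point, viewed as a trivial Courant algebroid (zero anchor, zero bracket, trivial metric). The key elementary observation is that, under the canonical identifications $\mbb{E}\times\overline{\mbf{0}}\cong\mbb{E}$ and $M\times\mathrm{pt}\cong M$, a subbundle $E\subseteq\mbb{E}$ supported on all of $M$ is a Dirac structure if and only if it is a Courant relation $E\colon\mbf{0}\dasharrow\mbb{E}$: since $\overline{\mbf{0}}=\mbf{0}$ and neither the metric nor the bracket is altered by the product with $\mbf{0}$, the conditions ``Lagrangian and involutive along $M$'' agree on both sides.

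For the first bullet I would regard $E$ as the Courant relation $E\colon\mbf{0}\dasharrow\mbb{E}$ and form $R\circ E\colon\mbf{0}\dasharrow\mbb{F}$. The hypothesis that $E$ composes cleanly with $R$ is exactly the clean-composition hypothesis for this pair of relations, so the theorem above gives that $R\circ E$ is a Courant relation, i.e. a Dirac structure in $\mbb{F}\times\overline{\mbf{0}}\cong\mbb{F}$ along some submanifold $S'\subseteq N$. Because the composition is clean, the set-theoretic image $R\circ E$ coincides with this relation-theoretic composition; the assumption that $F=R\circ E$ is a subbundle supported on all of $N$ then forces $S'=N$, so $F$ is a Dirac structure along all of $N$, that is, a Dirac structure in the plain sense.

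The second bullet is handled symmetrically. There I would instead view the Dirac structure $F$ as a Courant relation $F\colon\mbb{F}\dasharrow\mbf{0}$; this is legitimate because a Dirac structure in $\overline{\mbb{F}}$ is literally the same subbundle as a Dirac structure in $\mbb{F}$ (negating the metric preserves the condition $L^\perp=L$, and the Courant bracket is untouched). Composing $R\colon\mbb{E}\dasharrow\mbb{F}$ with $F\colon\mbb{F}\dasharrow\mbf{0}$ cleanly yields the Courant relation $F\circ R\colon\mbb{E}\dasharrow\mbf{0}$, which is a Dirac structure in $\mbf{0}\times\overline{\mbb{E}}\cong\overline{\mbb{E}}$, hence in $\mbb{E}$; the full-support hypothesis over $M$ again upgrades this to a Dirac structure along all of $M$.

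The only genuine points requiring care, and where I expect the main obstacle to lie, are the two pieces of bookkeeping concealed inside the black box. First, one must check that the Dirac-theoretic meaning of ``$E$ composes cleanly with $R$'' really coincides with clean composition of the relations $E\colon\mbf{0}\dasharrow\mbb{E}$ and $R$, so that $R\diamond E$ is a clean intersection and its projection has locally constant rank --- this is what guarantees that the set-theoretic image is a smooth subbundle matching the relation-theoretic composition, and it is the step that makes the Lagrangian and involutivity conclusions of the cited theorem applicable. Second, one must confirm that cleanness together with full support over $N$ (resp.\ $M$) is precisely what pins the support submanifold $S'$ to be the entire base. Both amount to unwinding the definition of clean composition, but they are exactly where the ``supported on all of $N$'' and ``supported on all of $M$'' hypotheses get used, and hence the heart of the argument.
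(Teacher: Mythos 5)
Your proposal is correct. The paper offers no proof of this proposition at all---it is imported verbatim from \cite{LiBland:2011vqa} as a companion to the clean-composition theorem stated just before it---and your derivation is exactly the intended one: regarding a Dirac structure supported on all of the base as a Courant relation to or from the trivial Courant algebroid over a point (using that $\overline{\mbf{0}}=\mbf{0}$ and that a Dirac structure in $\overline{\mbb{F}}$ is literally the same subbundle as one in $\mbb{F}$) reduces both bullets to the relation-composition statement, with the only remaining content being the two bookkeeping checks you correctly flag, namely that the Dirac-theoretic and relation-theoretic notions of clean composition agree and that the full-support hypothesis forces the support of the composed relation to be the entire base.
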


\section{Pseudo-Dirac structures}\label{sec:PseuDir}
In this section we describe our main results and examples. Proofs of the results will be presented later, in \cref{sec:LieSubAlg}, after describing the tangent prolongation of a Courant algebroid \cite{Boumaiza:2009eg}, and the theory of double structures.

\begin{definition}\label{def:pseuCon}
Suppose $\mbb{E}\to M$ is a vector bundle with a bundle metric $\la\cdot,\cdot\ra$, and $W\subseteq\mbb{E}$ is a subbundle. A map $\nabla:\Omega^0(M,W)\to \Omega^1(M, W^*)$ satisfying 
\begin{subequations}\label[pluralequation]{eq:pseudoCon}
\begin{align} 
\label{eq:NabDer}\nabla(f\sigma)&=f\nabla\sigma+\d\!f\otimes \la\sigma,\cdot\ra,\\
\label{eq:NabPairDer}\d\la\sigma,\tau\ra&=\la\nabla\sigma,\tau\ra+\la\sigma,\nabla\tau\ra,
\end{align}
\end{subequations}
 is called a \emph{pseudo-connection} for $W$. 
 
 Given a smooth map $\phi:N\to M$,  the \emph{pull-back pseudo-connection} $\phi^*\nabla$ for $\phi^*W\subseteq \phi^*\mbb{E}$ is defined by $$(\phi^*\nabla)_X\phi^*\sigma=\phi^*(\nabla_{d\phi(X)}\sigma),$$ where $\sigma\in\Gamma(W)$ and $X\in TN$.
\end{definition}

\begin{remark}
When $W\subseteq \mbb{E}$ is a quadratic subbundle (i.e. $W^\perp\cap W=0$), then \cref{eq:pseudoCon} are just the axioms of a metric connection. When $W\subseteq\mbb{E}$ is Lagrangian, then \cref{eq:NabDer} implies that $\nabla:\Omega^0(M,W)\to \Omega^1(M, W^*)$ is tensorial, and \cref{eq:NabPairDer} implies that $\nabla\in\Omega^1(M,\wedge^2W^*).$
\end{remark}

For a subbundle $W\subseteq\mbb{E}$ of a Courant algebroid, modifying the Courant bracket using a pseudo-connection, 
\begin{equation}\label{eq:pcModBrk}[\sigma,\tau]:=\Cour{\sigma,\tau}-\mbf{a}^*\la \nabla\sigma,\tau\ra,\quad\sigma,\tau\in\Gamma(W),\end{equation} 
defines a  $\Gamma(\mbb{E})$-valued bracket on sections of $W$. 

Using the modified bracket, we define a `torsion' tensor for the pseudo-connection, \begin{equation}\label{eq:torsTens}T(\sigma,\tau,\upsilon)=\la\nabla_{\mbf{a}(\sigma)}\tau-\nabla_{\mbf{a}(\tau)}\sigma-[\sigma,\tau],\upsilon\ra,\quad\sigma,\tau,\upsilon\in\Gamma(W).\end{equation}
\begin{remark}
In \cref{sec:LieSubAlg} we will show that the bracket \labelcref{eq:pcModBrk} is skew symmetric (\cref{lem:ModBrkProp1}), and that \cref{eq:torsTens} defines a skew-symmetric tensor, i.e. $T\in\Gamma(\wedge^3W^*)$ (\cref{prop:TorsTens}).
\end{remark}

\begin{definition}\label{def:LieSubalg}
Suppose $\mbb{E}\to M$ is a Courant algebroid.
A pair, $(W,\nabla)$, consisting of a subbundle $W\subseteq \mbb{E}$ together with pseudo-connection $\nabla$ for $W\subseteq \mbb{E}$ (cf. \cref{def:pseuCon}) is called a \emph{pseudo-Dirac structure} in $\mbb{E}$
if
\begin{itemize}
\item the modified bracket
\labelcref{eq:pcModBrk} takes values in $\Gamma(W)$, and
\item  the following expression 
\begin{equation}\label{eq:Psi_L}
\begin{split}
\Psi(\sigma,\tau,\upsilon)=&\la[\sigma,\tau],\nabla\upsilon\ra+\la[\upsilon,\sigma],\nabla\tau\ra+\la[\tau,\upsilon],\nabla\sigma\ra\\
&+\iota_{\mbf{a}(\sigma)}\d\la\nabla\tau,\upsilon\ra+\iota_{\mbf{a}(\upsilon)}\d\la\nabla\sigma,\tau\ra+\iota_{\mbf{a}(\tau)}\d\la\nabla\upsilon,\sigma\ra\\
&+\d T(\sigma,\tau,\upsilon),
\end{split}\end{equation}
for $\sigma,\tau,\upsilon\in\Gamma(W)$ vanishes.
\end{itemize}
\end{definition}

\begin{remark}[$\Psi$ is a tensor]
Suppose that  the modified bracket \labelcref{eq:pcModBrk} takes values in $\Gamma(W)$. 
In \cref{sec:LieSubAlg} we shall prove that \cref{eq:Psi_L} defines a skew symmetric tensor on $W$ (\cref{lem:PsiProp}). That is,  $$\Psi\in\Omega^1(M, \wedge^3 W^*).$$
\end{remark}

\begin{remark}\label{rem:PsiAsCurv} If $W\subseteq\mbb{E}$ is quadratic, then a pseudo-connection is simply a metric connection on $W$. Using the identity $$d\la\nabla\tau,\upsilon\ra=\la R\tau,\upsilon\ra-\la\nabla\tau\wedge\nabla\upsilon\ra,$$ where $R\in\Omega^2(M,\mf{o}(W))$ is the curvature tensor, one may rewrite \cref{eq:Psi_L} as
\begin{equation}\label{eq:PsiMet}
\Psi(\sigma,\tau,\upsilon)=\iota_{\mbf{a}(\sigma)}\la R\tau,\upsilon\ra+\iota_{\mbf{a}(\upsilon)}\la R\sigma,\tau\ra+\iota_{\mbf{a}(\tau)}\la R\upsilon,\sigma\ra
+(\nabla T)(\sigma,\tau,\upsilon).
\end{equation}
So, in this case, $\Psi\in\Omega^1(M,\wedge^3 W^*)$ can be expressed entirely in terms of the curvature and torsion of the connection.
\end{remark}

\begin{remark} The tensor $\Psi$ can be understood as an obstruction to the modified bracket \labelcref{eq:pcModBrk} satisfying the Jacobi identity. Indeed, for $\sigma,\tau,\upsilon\in\Gamma(W)$, we have
$$[\sigma,[\tau,\upsilon]]+[\tau,[\upsilon,\sigma]]+[\upsilon,[\sigma,\tau]]=-\mbf{a}^*\Psi(\sigma,\tau,\upsilon).$$

\end{remark}
The following theorem, one of our main results, will justify our interest in pseudo-Dirac structures.
\begin{theorem}\label{thm:LieSubIsVBDirMinor}
Suppose $\mbb{E}\to M$ is a Courant algebroid.
If $(W,\nabla)$ is a pseudo-Dirac structure for $\mbb{E}$, then 
$$[\sigma,\tau]:=\Cour{\sigma,\tau}-\mbf{a}^*\la \nabla\sigma,\tau\ra,\quad\sigma,\tau\in\Gamma(W),$$
 defines a Lie algebroid bracket on $W$.
\end{theorem}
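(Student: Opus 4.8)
The plan is to verify the defining properties of a Lie algebroid directly. That the modified bracket \labelcref{eq:pcModBrk} takes values in $\Gamma(W)$ is built into \cref{def:LieSubalg}, and $\mbb{R}$-bilinearity is immediate, so it remains to check skew-symmetry, the Leibniz rule, and the Jacobi identity. Skew-symmetry is a one-line computation: axiom c3) gives $\Cour{\sigma,\tau}+\Cour{\tau,\sigma}=\mbf{a}^*\d\la\sigma,\tau\ra$, whence $[\sigma,\tau]+[\tau,\sigma]=\mbf{a}^*\big(\d\la\sigma,\tau\ra-\la\nabla\sigma,\tau\ra-\la\sigma,\nabla\tau\ra\big)=0$ by the compatibility axiom \cref{eq:NabPairDer} (this is \cref{lem:ModBrkProp1}). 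For the Leibniz rule, axiom c4) gives $\Cour{\sigma,f\tau}=f\Cour{\sigma,\tau}+(\mbf{a}(\sigma)f)\tau$, while $C^\infty(M)$-linearity of the pairing in its $W$-slot and tensoriality of $\mbf{a}^*$ give $\mbf{a}^*\la\nabla\sigma,f\tau\ra=f\,\mbf{a}^*\la\nabla\sigma,\tau\ra$; subtracting, $[\sigma,f\tau]=f[\sigma,\tau]+(\mbf{a}(\sigma)f)\tau$. This identifies the anchor as the restriction $\mbf{a}|_W\colon W\to TM$ of the Courant anchor, and confirms that no anomalous $\d f$-term survives---it is \cref{eq:NabDer} that apportions the $\d f$ contribution correctly.

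The heart of the matter is the Jacobi identity, and the strategy is to reduce it to the hypothesis $\Psi=0$ by proving the Jacobiator identity
$$[\sigma,[\tau,\upsilon]]+[\tau,[\upsilon,\sigma]]+[\upsilon,[\sigma,\tau]]=-\mbf{a}^*\Psi(\sigma,\tau,\upsilon),\qquad \sigma,\tau,\upsilon\in\Gamma(W).$$
Granting this, the second bullet of \cref{def:LieSubalg} yields Jacobi at once. Establishing the Jacobiator identity is the main obstacle. I would expand each iterated bracket via \labelcref{eq:pcModBrk} into a pure Courant double bracket plus correction terms built from $\mbf{a}^*\la\nabla\cdot,\cdot\ra$. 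The cyclic sum of the pure Courant double brackets does \emph{not} vanish---the Courant bracket is not skew---but axioms c1) and c3) collapse it to $\mbf{a}^*$ applied to an explicit one-form. Expanding the correction terms and systematically applying c2) to rewrite the derivatives $\mbf{a}(\cdot)\la\cdot,\cdot\ra$, the pseudo-connection axioms \cref{eq:NabDer,eq:NabPairDer} to commute $\nabla$ past functions and across the pairing, and the definition \labelcref{eq:torsTens} of the torsion $T$, produces the remaining $\mbf{a}^*$-valued contributions. The crux is bookkeeping: one must show the entire cyclic sum lands in the image of $\mbf{a}^*$ and that its $T^*M$-valued coefficient assembles into exactly the expression $\Psi$ of \labelcref{eq:Psi_L}. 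The tensoriality of $\Psi$ (\cref{lem:PsiProp}) is a helpful consistency check, since it lets me verify the identity on a convenient local frame.

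Finally, I would note a more conceptual route that avoids the brute-force Jacobiator computation: the equivalence (\cref{prop:clasLagTDSB} and its Dirac refinement) between pseudo-Dirac structures $(W,\nabla)$ in $\mbb{E}$ and linear Dirac structures in the tangent prolongation $T\mbb{E}$. A Dirac structure carries a canonical Lie algebroid bracket, so the desired bracket on $W$---and its Jacobi identity---would be inherited under the correspondence. Here the obstacle is relocated rather than removed: one must match the inherited bracket with \labelcref{eq:pcModBrk}, which again amounts to understanding how $\nabla$ enters the Courant bracket on $T\mbb{E}$. Given the double-structure machinery to be developed in \cref{sec:LieSubAlg}, I expect this to be the cleaner path and the one actually taken there.
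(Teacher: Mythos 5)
Your verification of skew-symmetry is exactly the paper's \cref{lem:ModBrkProp1}, and your Leibniz computation is correct (the paper obtains it only implicitly, through \cref{prop:VBDirIsLaVB,prop:qIndBrk}). Where you diverge is the Jacobi identity. The paper never verifies Jacobi by direct computation: its proof of \cref{thm:LieSubIsVBDir} runs entirely through the tangent prolongation --- \cref{prop:clasLagTDSB}, \cref{lem:ModBrkProp2} and \cref{lem:PsiProp} identify pseudo-Dirac structures $(W,\nabla)$ with $\mc{VB}$-Dirac structures $L\subseteq T\mbb{E}$; \cref{prop:VBDirIsLaVB} then gives the Lie algebroid structure on the side bundle $W$ essentially for free, since Jacobi is inherited from axiom c1) applied to sections of the Dirac structure $L$; and \cref{prop:qIndBrk} computes that the inherited bracket is precisely the modified bracket \labelcref{eq:pcModBrk}. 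This is the ``conceptual route'' you describe in your closing paragraph, and you are right that it is the one the paper takes.

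The gap in your primary route is the Jacobiator identity $[\sigma,[\tau,\upsilon]]+[\tau,[\upsilon,\sigma]]+[\upsilon,[\sigma,\tau]]=-\mbf{a}^*\Psi(\sigma,\tau,\upsilon)$. You reduce everything to it, but you do not prove it, and in the paper it appears only as an unproven remark following \cref{def:LieSubalg}, so you cannot cite it either; all of the content of the theorem is concentrated in this one computation, which is of the same order of difficulty as the paper's derivation of \labelcref{eq:Psi_L} in the proof of \cref{lem:PsiProp}. Your outline of how it would go is sound: using c1) and c3) together with the identity $\Cour{\sigma,\mbf{a}^*\alpha}=\mbf{a}^*\Lied_{\mbf{a}(\sigma)}\alpha$, the cyclic sum of double Courant brackets does collapse to $\mbf{a}^*$ of the one-form $\d\la\Cour{\sigma,\tau},\upsilon\ra+\d\,\iota_{\mbf{a}(\tau)}\d\la\sigma,\upsilon\ra$, and the correction terms can then be massaged into \labelcref{eq:Psi_L} using c2), \cref{eq:NabPairDer} and the definition \labelcref{eq:torsTens} of $T$. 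But until that bookkeeping is actually executed the proof is incomplete at its decisive step. Either carry the computation out in full, or commit to the tangent-prolongation argument, where the Jacobi identity costs nothing.
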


We will prove \cref{thm:LieSubIsVBDirMinor} in \cref{sec:LieSubAlg}, where we will also show that there is a one-to-one correspondence between pseudo-Dirac structures in $\mbb{E}\to M$ and  (linear) Dirac structures in the tangent prolongation, $T\mbb{E}\to TM$, of $\mbb{E}$.

\subsection{Examples}

\begin{example}
If $W\subseteq\mbb{E}$ is a Dirac structure, then 
   $(W,\nabla\equiv0)$ is a pseudo-Dirac structure. 
\end{example}

\begin{example}\label{ex:BarLieSub}
If $(W,\nabla)$ is a pseudo-Dirac structure for the Courant algebroid $\mbb{E}$, then $(W,-\nabla)$ is a pseudo-Dirac structure for $\overline{\mbb{E}}$.
\end{example}

\begin{example}
If $\mf{d}$ is a quadratic Lie algebra, then any Lie subalgebra $\g\subset\mf{d}$ is a pseudo-Dirac structure.
\end{example}


\begin{example}[Action Courant algebroids]\label{ex:LieSubAct}
Suppose $\mf{d}$ is a quadratic Lie algebra which acts on a manifold $M$ with coisotropic stabilizers. In this case, as explained in \cite{LiBland:2009ul}, the bundle $\mf{d}\times M$ is naturally a Courant algebroid (see \cref{ex:ActCourAlg} for details). If $\h\subseteq\mf{d}$ is any subalgebra, then $(\h\times M,\d)$ is a pseudo-Dirac structure in $\mf{d}\times M$, where $\d$ is the standard connection on the trivial bundle $\mf{d}\times M$. As a Lie algebroid, $(\h\times M,\d)$ is isomorphic to the action Lie algebroid, as can be seen by comparing \labelcref{eq:pcModBrk} with \labelcref{eq:actioncourant}.
\end{example}

The following is a special case of the last example.
\begin{example}[Dirac Lie groups]
Dirac Lie groups for which multiplication is a morphism of Manin pairs were classified in \cite{LiBland:2010wi, LiBland:2011vqa}  (see also \cite{Ortiz:2008bd, Jotz:2009va} for a different setting). There it was shown that the underlying Courant algebroid can be canonically trivialized as an action Courant algebroid  $\mbb{A}=\mf{d}\times H$ (see \cref{ex:ActCourAlg}), and the Dirac structure is a constant subbundle  $E=\g\times H$ under this trivialization.

As such, both $(\mf{d}\times H,\d)$ and $(\g\times H,\d)$  define pseudo-Dirac structures in $\mbb{A}$. Moreover, if $\mf{r}\subset\mf{q}$ is the Lie subalgebra transverse to $\g$ described in \cite[Section~3.2]{LiBland:2011vqa}, then $(\mf{r}\times H,\d)$ describes a pseudo-Dirac structure which does not correspond to any Dirac structure in $\mbb{A}$. 
\end{example}

\begin{example}[Cotangent Lie algebroids]
Suppose that $T^*M$ carries the structure of a Lie algebroid with anchor map $\mbf{a}':T^*M\to TM$. Then $W=\gr(\mbf{a}')\subset \mbb{T}M$ is a pseudo-Dirac structure, where the pseudo-connection is defined by \cref{eq:pcModBrk}. That is
$$\la\nabla\sigma,\tau\ra=\Lied_{\mbf{a}'(\sigma)}j(\tau)-\iota_{\mbf{a}'(\tau)}dj(\sigma)-j([\sigma,\tau]),$$
for $\sigma,\tau\in\Gamma(W)$, where $j:W\to T^*M$ is the inverse of $( \mbf{a}'\oplus \on{id}):T^*M\to W$.

This example arises in q-Poisson geometry. \v{S}evera and the author \cite{LiBland:2010wi} showed that given any q-Poisson $(\g\oplus\bar\g,\g_\Delta)$-structure on a manifold $M$ (see \cref{ex:qPCotLie} for details), $T^*M$ carries the structure of a Lie algebroid. Of significance is that $T^*M$ cannot generally be identified with a Dirac structure in any exact Courant algebroid, which contrasts the case of a Poisson structure on $M$ (cf. \cref{ex:StdCourAlgPoisSymp}). However, both Poisson and q-Poisson structures on $M$ endow $T^*M$ with the structure of pseudo-Dirac structure, as we shall soon explain in more detail in \cref{ex:qPCotLie}.
\end{example}

\begin{example}\label{ex:PseudoAndTwists}
Suppose $(W,\nabla)$ is a pseudo-Dirac structure in $\mbb{T}M$ and $\gamma\in\Omega^3_{cl}(M)$ is a closed 3-form, then $(W,\nabla')$ is a pseudo-Dirac structure in $\mbb{T}_\gamma M$, where 
$$\la\nabla'\sigma,\tau\ra=\la\nabla\sigma,\tau\ra+\iota_{\mbf{a}(\sigma)}\iota_{\mbf{a}(\tau)}\gamma,$$
and $\mbb{T}_\gamma M$ is as in \cref{ex:exactCA}.
\end{example}

\begin{example}\label{ex:MetCon}
Let $g\in S^2(TM)$ be a non-degenerate quadratic form, defining a pseudo-Riemannian structure on $M$, and let $\tilde\nabla$ be the corresponding Levi-Civita connection.
 Suppose $\omega\in\Omega^2(M)$ is a 2-form. 
 We will consider the subbundle 
$$W=\gr(\omega^\flat+g^\flat)=\{(v,g(v,\cdot)+\iota_v\omega)\mid v\in TM\}\subseteq \mbb{T}M.$$
For any closed 3-form $\eta\in\Omega^3_{cl}(M)$, We define the pseudo-connection $\nabla^\eta:\Omega^0(W)\to\Omega^1(W^*)$  by
$$\nabla^\eta_Z X=\tilde\nabla_Z X-(g^\flat)^{-1}(\frac{1}{2}\iota_Z\iota_X \eta),\quad X,Z\in\mf{X}(M)$$
where we have used the anchor map to identify $W\cong TM$ and the pseudo-Riemannian structure to identify $TM\cong W^*$. (This is the unique metric connection on $TM\cong W$ which has torsion $T(X,Y,Z)=2\iota_Z\iota_Y\iota_X\eta$ (cf. \cite[Theorem 6.26]{Gualtieri:2004wh} or \cite[Proposition XII]{Greub:1973vs}).)

We claim that $(W,\nabla^{d\omega})$ is a pseudo-Dirac structure in $\mbb{T}M$. It follows from \cref{ex:PseudoAndTwists}  that for any closed 3-form $\gamma\in\Omega^3_{cl}(M)$, 
 $$(\gr(\omega^\flat+g^\flat),\nabla^{d\omega-\gamma})$$
  is a pseudo-Dirac structure in $\mbb{T}_\gamma M$. 
  
  To prove this claim first note that the pseudo-connection defined above is uniquely determined by requiring that $\Gamma(W)$ be involutive with respect to the modified bracket \labelcref{eq:pcModBrk}. Once one uses \cref{rem:PsiAsCurv} to express $\Psi$ in terms of the curvature and torsion of the connection, the fact that $\Psi$ vanishes is equivalent to the Bianchi identity.
  
  \begin{remark}
  When $g$ is positive definite, these structures play an important role in generalized-K\"{a}hler and bi-hermitian geometry as explained by Gualtieri \cite[Theorem 6.28]{Gualtieri:2004wh}: a generalized-K\"{a}hler structure on $M$ is equivalent to 
  a pair of almost complex structures $J^\pm:TM\to TM$ which are  compatible with a pseudo-Dirac structure, $(\gr(\omega^\flat+g^\flat),\nabla^{d\omega})$, in the following sense:
  \begin{itemize}
  \item$g(J^\pm\cdot,J^\pm\cdot)=g(\cdot,\cdot)$,
  \item$d\omega$ is of type $(2,1)+(1,2)$ with respect to $J^\pm$, and
  \item$\nabla^{\pm d\omega}J^\pm=0.$
  \end{itemize}
  \end{remark}


%
\end{example}

In \cref{sec:LieSubAlg}, we shall explain the relationship between pseudo-Dirac structures in $\mbb{E}$ and Dirac structures in $T\mbb{E}$. This will give us a more conceptual framework with which to work with pseudo-Dirac structures. Using this framework, we will describe a number of important examples of pseudo-Dirac structures in \cref{sec:FBimage}.

\section{Relation to the tangent prolongation of the Courant algebroid}\label{sec:LieSubAlg}
The tangent prolongation, $T\mbb{E}\to TM$ of a Courant algebroid $\mbb{E}\to M$ is an example of a \emph{double vector bundle}, a structure introduced by Pradines \cite{Pradines:1974tc}. In the next subsection, we recall some important properties of double vector bundles, before explaining the relationship between pseudo-Dirac structures in $\mbb{E}$ and Dirac structures in $T\mbb{E}$.  

\subsection{Double structures}
In this section, we recall the concepts of a double vector bundle and an $\mc{LA}$-vector bundle. We define both structures here to highlight their conceptual similarity.

\begin{definition}\label{def:DLACAVB}
Suppose that $D\to A$ and $B\to M$ are vector bundles, and
\begin{equation}\label{eq:DVB}\begin{tikzpicture}
\mmat{m}{
D&B\\
A&M\\
};
\path[->] (m-1-1) edge  (m-1-2);
\path[->] (m-1-1) edge  (m-2-1);
\path[->] (m-1-2) edge  (m-2-2);
\path[->] (m-2-1) edge  (m-2-2);
\end{tikzpicture}\end{equation}
is a morphism of vector bundles. We let $\gr(+_{D/A})\subseteq D\times D\times D$ denote the graph of addition for $D\to A$, as in \cref{ex:GrAdd}.
\begin{itemize}
\item $D$ is called a \emph{double vector bundle} if $D$ is a vector bundle over $B$ and $\gr(+_{D/A})$ is a vector subbundle of $D^3\to B^3$.
\item $D$ is called an \emph{$\mc{LA}$-vector bundle} if $D$ is a Lie algebroid over $B$ and $\gr(+_{D/A})\subseteq D^3$ is Lie subalgebroid.
\end{itemize}

\end{definition}
%


\subsubsection{Double vector bundles}
In essence, double vector bundles are vector bundles in the category of vector bundles. They were first introduced by Pradines \cite{Pradines:1974tc} and further studied in \cite{Mackenzie:2005tc,Konieczna:1999vh,Grabowski:2009dc}.

\begin{example}\label{ex:DirectSumDVB}
Suppose that $A$, $B$ and $C$ are all vector bundles over the manifold $M$. Then $A\times_M B\times _M C$ is the total space of a vector bundle over $A$ and of a vector bundle over $B$ with the respective additions given by
$$(a,b_1,c_1)+_{D/A}(a,b_2,c_2)=(a,b_1+b_2,c_1+c_2),$$ and
$$(a_1,b,c_1)+_{D/B}(a_2,b,c_2)=(a_1+a_2,b,c_1+c_2),$$ where $a,a_1,a_2\in A$, $b,b_1,b_2\in B$ and $c,c_1,c_2\in C$ all lie over the same point in $M$. With these vector bundle structures, 
$$\begin{tikzpicture}
\mmat{m}{
A\times_M B\times _M C&B\\
A&M\\
};
\path[->] (m-1-1) edge  (m-1-2);
\path[->] (m-1-1) edge  (m-2-1);
\path[->] (m-1-2) edge  (m-2-2);
\path[->] (m-2-1) edge  (m-2-2);
\end{tikzpicture}$$
is a double vector bundle. All double vector bundles are (non-canonically) isomorphic to a double vector bundle of this form.
\end{example}

\begin{example}[Tangent bundle of a vector bundle]\label{ex:TngDVB}
The following example, which will be of central importance to this paper, is due to Pradines \cite{Pradines:1974tc}.
Suppose that $B\to M$ is a vector bundle, then 
\begin{equation}\label{eq:TngDVB}\begin{tikzpicture}
\mmat{m}{
TB&B\\
TM&M\\
};
\path[->] (m-1-1)	edge (m-1-2)
				edge (m-2-1);
\path[<-] (m-2-2)	edge  (m-1-2)
				edge (m-2-1);
\end{tikzpicture}\end{equation}
is a double vector bundle. The graph of the addition in the vector bundle $TB\to TM$ is obtained by applying the tangent functor $$\gr(+_{TB/TM})=T\gr(+{B/M})$$ to the graph of the addition in the vector bundle $B\to M$. We will revisit this example in more detail in \cref{ex:TngDVB2,ex:TngLAVB}  below.
\end{example}

 \cref{def:DLACAVB} shows the similarity between the notion of double vector bundles and $\mc{LA}$-vector bundles. Moreover, it facilitates certain calculations. However some of the properties of double vector bundles are more readily apparent from the following standard definition \cite{Pradines:1974tc,Mackenzie:2005tc,Mackenzie05}, for which the roles of the two vector bundle structures on $D$ are manifestly symmetric.
\begin{definition}[\cite{Pradines:1974tc,Mackenzie:2005tc,Mackenzie05}]\label{def:AltDVBdef}A double vector bundle
 $$\begin{tikzpicture}
\mmat{m}{
D&B\\
A&M\\
};
\path[->] (m-1-1) edge  (m-1-2);
\path[->] (m-1-1) edge  (m-2-1);
\path[->] (m-1-2) edge  (m-2-2);
\path[->] (m-2-1) edge  (m-2-2);
\end{tikzpicture}$$
is a commutative diagram of vector bundles for which the following equations hold
\begin{align}\label{eq:DVBinterchg}(d_1+_{D/B} d_2)+_{D/A} (d_3+_{D/B} d_4)&=(d_1+_{D/A} d_3)+_{D/B}(d_2+_{D/A} d_4),\\
 t\cdot_{D/A}(d_1+_{D/B} d_2)&=t\cdot_{D/A} d_1+_{D/B} t\cdot_{D/A} d_2,\\
 t\cdot_{D/B}(d_1+_{D/A} d_3)&=t\cdot_{D/B} d_1 +_{D/A} t\cdot_{D/B} d_3,\end{align}
for any $d_1,d_2,d_3,d_4\in D$  satisfying $(d_1,d_2)\in D\times_B D$, $(d_3,d_4)\in D\times_B D$, and $(d_1,d_3)\in D\times_A D$, $(d_2,d_4)\in D\times_A D$, and any $t\in\mbb{R}$.

Here $+_{D/A}$ and $+_{D/B}$ denote the additive operations on $D$, viewed as a vector bundle over $A$ and $B$, respectively, while $\cdot_{D/A}$ and $\cdot_{D/B}$ denote the scalar multiplication operations.
\end{definition}
The proof that these two definitions are equivalent is the content of \cite[Corollary 2.5.1]{LiBland:2012un}.

We refer to the vector bundles $A\to M$ and $B\to M$ as the \emph{horizontal} and \emph{vertical side bundles}, respectively. We let $0_{D/A}:A\to D$ and $0_{D/B}:B\to D$ denote the zero sections, and $\tilde 0_{D/M}:M\to D$ be the composition of $0_{D/B}:B\to D$ with the zero section $M\to B$.

We let $D^{flip}$
denote the reflection of \labelcref{eq:DVB} across the diagonal, 
$$\begin{tikzpicture}
\mmat{m}{
D&A\\
B&M\\
};
\path[->] (m-1-1) edge node {$q_{D/A}$} (m-1-2);
\path[->] (m-1-1) edge node[swap] {$q_{D/B}$} (m-2-1);
\path[->] (m-1-2) edge node {$q_{A/M}$} (m-2-2);
\path[->] (m-2-1) edge node[swap] {$q_{B/M}$} (m-2-2);
\end{tikzpicture}$$
which, as is apparent from \cref{def:AltDVBdef}, is also a double vector bundle.

\subsubsection{The core}
Consider the submanifold $C:=\on{ker}(q_{D/A})\cap\on{ker}(q_{D/B})$.
If in \cref{eq:DVBinterchg}, we let $d_1,d_4\in C$ and $d_2,d_3=\tilde 0_{D/M}$, then we get 
\begin{align*}
d_1+_{D/A} d_4 =& (d_1+_{D/B} \tilde 0_{D/M})+_{D/A}(\tilde 0_{D/M}+_{D/B} d_4)\\
=&(d_1+_{D/A} \tilde 0_{D/M})+_{D/B} (\tilde 0_{D/M}+_{D/A} d_4)\\
=&d_1+_{D/B} d_4.
\end{align*} 
So both $+_{D/A}$ and $+_{D/B}$ define the same additive structure on $C$. Similarly both $\cdot_{D/A}$ and $\cdot_{D/B}$ restrict to the same scalar multiplication on $C$. Therefore, with either choice of addition and scalar multiplication, $C$ is a vector bundle over $M$, called the \emph{core} of $D$. 

We may occasionally display the core explicitly in a double vector bundle diagram, as follows:
$$\begin{tikzpicture}
\mmat{m}{
D&B\\
A&M\\
};
\path[->] (m-1-1) edge  (m-1-2);
\path[->] (m-1-1) edge  (m-2-1);
\path[->] (m-1-2) edge  (m-2-2);
\path[->] (m-2-1) edge  (m-2-2);
\draw (0,0) node (c) {$C$} ;
\path[left hook->] (c) edge (m-1-1);
\end{tikzpicture}$$

\begin{example}
Suppose $A,B$ and $C$ are vector bundles over $M$. The core in the double vector bundle
$$\begin{tikzpicture}
\mmat{m}{
A\times_M B\times _M C&B\\
A&M\\
};
\path[->] (m-1-1) edge  (m-1-2);
\path[->] (m-1-1) edge  (m-2-1);
\path[->] (m-1-2) edge  (m-2-2);
\path[->] (m-2-1) edge  (m-2-2);
\end{tikzpicture}$$
 (c.f \cref{ex:DirectSumDVB}) is the subbundle $C\subseteq A\times_M B\times _M C$.
\end{example}
%

We let $i:C\to D$ denote the inclusion and 
$i_A:q_{A/M}^*C\to D$
denote  the composition 
$$\begin{tikzpicture}
\mmat[5em]{m}{A\times_M C&D\times_B D&D.\\};
\path[->] (m-1-1) edge node {$ 0_{D/A}\times i$} (m-1-2);
\path[->] (m-1-2) edge node {$+_{D/B}$} (m-1-3);
\end{tikzpicture}$$
The exact sequence of vector bundles 
\begin{equation}\label{eq:iA}\begin{tikzpicture}
\mmat{m}{q_{A/M}^* C & D& q_{A/M}^* B\\ A& A& A\\};
\path[->] (m-1-1) 	edge node {$i_A$} (m-1-2)
				edge (m-2-1);
\path[->] (m-1-2)	edge (m-1-3)
				edge (m-2-2);
\path[->] (m-1-3)	edge (m-2-3);
\path[->] (m-2-1)	edge node[swap] {$\on{id}$} (m-2-2);
\path[->] (m-2-2)	edge node[swap] {$\on{id}$} (m-2-3);
\end{tikzpicture}\end{equation} is called the \emph{core sequence}.

Among the sections of $D\to A$, $\Gamma(D,A)$, there are two subspaces of sections: the \emph{core} sections $\Gamma_C(D,A)$, and the \emph{linear} sections $\Gamma_l(D,A)$. 
For any section $\sigma\in\Gamma(C)$, we let  the section $\sigma_{c_A}:A\to D$ be given by
$$\begin{tikzpicture}
\mmat{m}{A&&q_{A/M}^*C&D\\};
\path[->] (m-1-1) edge node {$q_{A/M}^*\sigma$} (m-1-3);
\path[->] (m-1-3) edge node {$i_A$} (m-1-4);
\end{tikzpicture}$$
The map $\sigma\to\sigma_{c_A}$ embeds $\Gamma(C)$ into $\Gamma(D,A)$, as the space $\Gamma_C(D,A)$ of core sections.

Meanwhile a section $\sigma\in\Gamma(D,A)$ is called linear, if there is a section $\sigma_0\in\Gamma(B)$ such that the vertical map
$$\begin{tikzpicture}
\mmat{m}{
D&B\\
A&M\\
};
\path[->] (m-2-1) edge node {$\sigma$} (m-1-1);
\path[->] (m-1-1) edge (m-1-2);
\path[->] (m-2-1) edge (m-2-2);
\path[->] (m-2-2) edge node {$\sigma_0$} (m-1-2);
\end{tikzpicture}$$
is a morphism of vector bundles (in this case, $\sigma_0=q_{D/B}\circ \sigma\circ 0_{A/M}$ is unique). The subspace of linear sections is denoted by $\Gamma_l(D,A)\subset\Gamma(D,A)$. Moreover, as explained in \cite[Section 2.4]{gracia2010lie} $\Gamma_l(D,A)$ is a locally free $C^\infty(M)$ module which fits into the short exact sequence \begin{equation}\label{eq:ShrtLinSeq}0\to \Gamma(A^*\otimes C)\to\Gamma_l(D,A)\to\Gamma(B)\to0.\end{equation} Here the injection $\Gamma(A^*\otimes C)\to\Gamma_l(D,A)$ maps $\tau\in \Gamma(A^*\otimes C)$ to the section $$a\to i_A(a,\tau(a)), \quad a\in A,$$ while the surjection $\Gamma_l(D,A)\to\Gamma(B)$ maps $\sigma\in\Gamma_l(D,A)$ to $q_{D/B}\circ \sigma\circ 0_{A/M}$. In the sequel, we will abuse notation and denote this surjection simply by $$q_{D/B}:\Gamma_l(D,A)\to\Gamma(B).$$

It is clear that there are analogous notions with the roles of $A$ and $B$ replaced.

\begin{example}[Tangent bundle of a vector bundle (cont.)]\label{ex:TngDVB2}
Suppose that $B\to M$ is a vector bundle, then as explained in \cref{ex:TngDVB},
$$\begin{tikzpicture}
\mmat{m}{
TB&B\\
TM&M\\
};
\path[->] (m-1-1)	edge (m-1-2)
				edge (m-2-1);
\path[<-] (m-2-2)	edge node[swap] {$q_{B/M}$} (m-1-2)
				edge node {$q_{TM/M}$}(m-2-1);
\end{tikzpicture}$$
is a double vector bundle. For a point $x\in M$, the core fibre over $x$ consists of vectors based at $x$ which are tangent to the fibre, $q_{B/M}^{-1}(x)$, of $B$ over $x$ (see \cref{fig:TngCore}). So the core is canonically isomorphic to $B$:
$$\begin{tikzpicture}
\mmat{m}{
TB&B\\
TM&M\\
};
\path[->] (m-1-1)	edge (m-1-2)
				edge (m-2-1);
\path[<-] (m-2-2)	edge (m-1-2)
				edge (m-2-1);
\draw (0,0) node (c) {$B$};
\path[left hook->] (c) edge (m-1-1);
\end{tikzpicture}$$
In the sequel, for $\sigma\in\Gamma(B)$, we will typically denote $\sigma_{c_{TM}}\in\Gamma_C(TB,TM)$ by $\sigma_C$ to simplify notation. 

\begin{figure}
\begin{tabular}{cc}
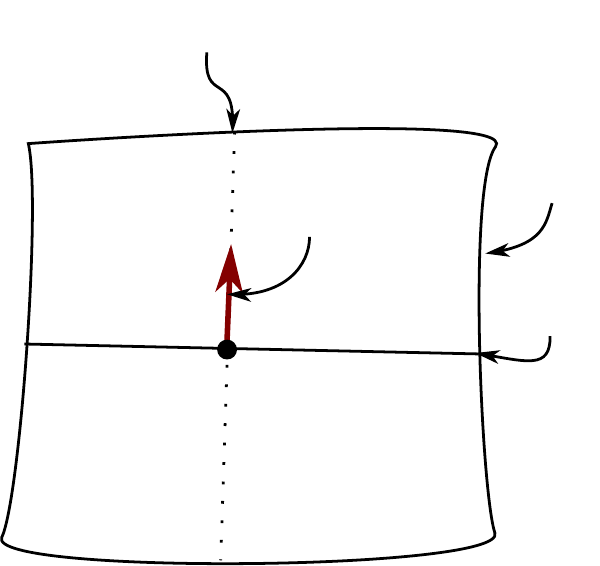 & 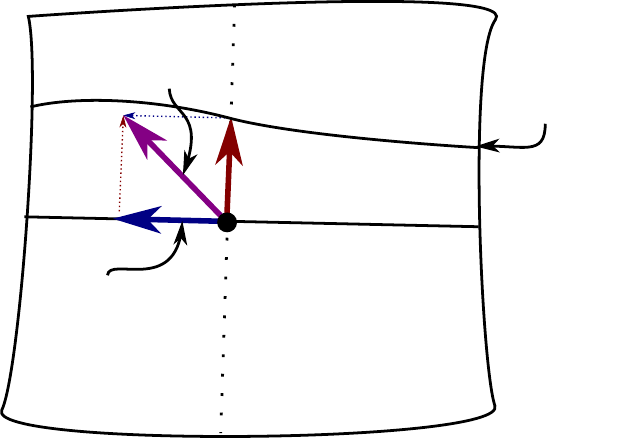
\end{tabular}
\caption{\label{fig:TngCore}The vector $\xi\in T_xB$ is tangent to the fibre $q_{B/M}^{-1}(x)$, and therefore defines a core element of $TB$ at $x\in M$. If $\sigma\in\Gamma(B)$ is such that $\xi=\sigma(x)$, then for any $X\in TM$, $\sigma_C(X)=X+_{TB/B}\xi$, (here the addition takes place in the vector space $T_xB$).}
\end{figure}

Meanwhile,  the differential $\d\sigma:TM\to TB$ is a linear section of $TB\to TM$ canonically associated to the section $\sigma:M\to B$, which we call the tangent lift $\sigma_T$ of $\sigma$ (see \cref{fig:TngLiftSec}). The map $$\Gamma(B)\xrightarrow{\sigma\to\sigma_T}\Gamma_l(TB,TM)$$ defines a splitting of the exact sequence \cref{eq:ShrtLinSeq} (but not of the underlying vector bundles).

\begin{figure}
\begin{center}
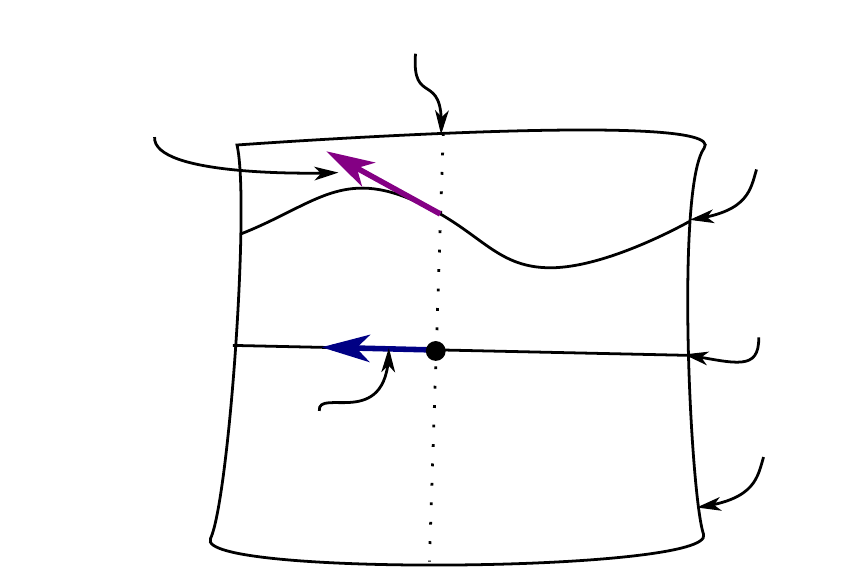
\end{center}
\caption{\label{fig:TngLiftSec}For any section $\sigma\in\Gamma(B)$, the tangent lift $\sigma_T\in\Gamma(TB,TM)$ of $\sigma$ takes $X\in TM$ to $\d\sigma(X)\in TB$.}
\end{figure}

For $f\in C^\infty(M)$, we introduce the notation $f_C:=q_{TM/M}^*f\in C^\infty(TM)$ and $f_T:=\d\! f\in C^\infty(TM)$.\footnote{Here, we understand the 1-form, $\d\! f\in\Omega^1(M)$, as defining a function on $TM$.} Then we have the following rules 
\begin{subequations}\label{eq:TCDer}
\begin{align}
(f\cdot\sigma)_C&=f_C\cdot\sigma_C,\\
(f\cdot\sigma)_T&=f_T\cdot\sigma_C+f_C\cdot\sigma_T.
\end{align}\end{subequations}

Finally, we point out that if $\{\sigma^i\}\subset\Gamma(B)$ is a local basis of sections of $B$, then $\{\sigma^i_C,\sigma^i_T\}\subset\Gamma(TB,TM)$ is a local basis of sections of $TB$.

\end{example}

\subsubsection{Duals of double vector bundles}
A double vector bundle
\begin{equation}\label{eq:DVBdualpic}\begin{tikzpicture}
\mmat{m}{
D&B\\
A&M\\
};
\path[->] (m-1-1) edge  (m-1-2);
\path[->] (m-1-1) edge  (m-2-1);
\path[->] (m-1-2) edge  (m-2-2);
\path[->] (m-2-1) edge  (m-2-2);
\draw (0,0) node (c) {$C$} ;
\path[left hook->] (c) edge (m-1-1);
\end{tikzpicture}\end{equation}
is the total space for two ordinary vector bundles: 
$$\begin{tikzpicture}
\draw (-3,0) node (d) {$D$};
\draw (-2,0) node (b) {$B$};
\draw (2,.5) node (d') {$D$};
\draw (2,-.5) node (a) {$A$};
\draw (0,0) node {and};
\path[->] (d) edge (b);
\path[->] (d') edge (a);
\end{tikzpicture}$$
 We denote the duals of these two ordinary vector bundles by
$$\begin{tikzpicture}
\draw (-3,0) node (d) {$D^{*_{\!x}}$};
\draw (-2,0) node (b) {$B$};
\draw (2,.5) node (d') {$D^{*_{\!y}}$};
\draw (2,-.5) node (a) {$A$};
\draw (0,0) node {and};
\path[->] (d) edge (b);
\path[->] (d') edge (a);
\end{tikzpicture}$$
The notation $*_{\!x}$ and $*_{\!y}$, which we have adapted from \cite{GraciaSaz:2009ck},\footnote{Gracia-Saz and Mackenzie denote $*_{\!x}$ and $*_{\!y}$ by $X$ and $Y$, respectively.} identifies the arrow along which we dualize with the appropriate axis:
$$\begin{tikzpicture}
\path[->] (0,0) edge node {$x$} (1,0)
		edge node[swap] {$y$} (0,-1);
\end{tikzpicture}$$
We call $D^{*_{\!y}}$  the \emph{vertical dual} or \emph{dual over $A$}, and $D^{*_{\!x}}$ the \emph{horizontal dual} or \emph{dual over $B$}.

Pradines \cite{Pradines:1988td}, and later Konieczna-Urba\'{n}ski \cite{Konieczna:1999vh}, Mackenzie \cite{Mackenzie:2005tc,Mackenzie:1999vk} and  Grabowski-Rotkiewicz \cite{Grabowski:2009dc}, studied the duals $D^{*_{\!x}}$ and $D^{*_{\!y}}$, proving that they form the total space for double vector bundles themselves. 
\begin{proposition}[\cite{Konieczna:1999vh,Mackenzie:1999vk}]
\begin{equation}\label{eq:DVBDual}\begin{tikzpicture}
\mmat{m}{D^{*_{\!y}}&C^*\\ A& M\\};
\path[->] (m-1-1)	edge (m-1-2)
				edge (m-2-1);
\path[<-] (m-2-2)	edge (m-1-2)
				edge (m-2-1);
\draw (0,0) node (b) {$B^*$};
\path[left hook->] (b) edge (m-1-1);
\end{tikzpicture}\end{equation}
is a double vector bundle, where the inclusion $B^*\to D^{*_{\!y}}$ and the projection $D^{*_{\!y}}\to C^*$ are given by dualizing the core sequence \labelcref{eq:iA}.
Addition and scalar multiplication for the vector bundle $ D^{*_{\!y}}\to C^*$ is defined by the relations \begin{align}\gr(+_{D^{*_{\!y}}/C^*})&:=\ann^\natural(\gr(+_{D/B})):D^{*_{\!y}}\times D^{*_{\!y}}\dasharrow D^{*_{\!y}}\\
\gr(t\cdot_{D^{*_{\!y}}/C^*})&:=\ann^\natural(\gr(t\cdot_{D/B})):D^{*_{\!y}}\dasharrow D^{*_{\!y}},\quad t\in\mbb{R}.\end{align} 
\end{proposition}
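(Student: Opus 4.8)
The plan is to deduce the double vector bundle structure on $D^{*_{\!y}}$ directly from that on $D$ by transporting everything through the operation $\ann^\natural$, organizing the bookkeeping around \cref{def:DLACAVB}. Since $D^{*_{\!y}}\to A$ is already a vector bundle (it is the fibrewise dual of $D\to A$), I would invoke \cref{def:DLACAVB} with the roles of the two side bundles assigned so that $C^*$ plays the role of the base of the ``new'' structure and $A$ that of the side bundle. With this assignment the conditions that $D^{*_{\!y}}\to A$ be a vector bundle is met by construction, and the inclusion $B^*\to D^{*_{\!y}}$, the projection $D^{*_{\!y}}\to C^*$, and the identification of the core as $B^*$ all come from dualizing the core sequence \cref{eq:iA} over $A$, which yields the exact sequence $0\to q_{A/M}^*B^*\to D^{*_{\!y}}\to q_{A/M}^*C^*\to 0$. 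It then remains to establish two things: (I) the vector bundle structure on $D^{*_{\!y}}\to C^*$ given in the statement, and (II) the double-vector-bundle compatibility, namely that $\gr(+_{D^{*_{\!y}}/C^*})$ is a vector subbundle of $(D^{*_{\!y}})^3\to A^3$ (this single condition encodes both the interchange law and the requirement that the defining square be a morphism of vector bundles).

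The heart of the argument is (I). First I would note that, because $D$ is a double vector bundle, the operations $+_{D/B}$ and $t\cdot_{D/B}$ are linear for the $A$-structure; this is exactly the content of the interchange and scalar laws of \cref{def:AltDVBdef}. Hence $\gr(+_{D/B})\colon D\times D\dasharrow D$ and $\gr(t\cdot_{D/B})\colon D\dasharrow D$ are $\mc{VB}$-relations for the vector bundle structures over $A$, supported over the corresponding base relations in $A$. Now each vector-bundle axiom for $D\to B$ — associativity and commutativity of $+_{D/B}$, the unit and inverse laws, and distributivity and bilinearity of $t\cdot_{D/B}$ — can be written as an identity between two compositions of these relations, together with the identity and swap relations on products. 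Applying $\ann^\natural$ and invoking \cref{lem:ann}, which asserts that $\ann^\natural$ preserves composition, together with the evident facts that $\ann^\natural$ carries identity and swap relations to identity and swap relations and is compatible with Cartesian products, I would conclude that every such identity is transported to the corresponding identity for $\ann^\natural(\gr(+_{D/B}))$ and $\ann^\natural(\gr(t\cdot_{D/B}))$. These are therefore the graph of addition and the graph of scalar multiplication of a vector bundle structure on $D^{*_{\!y}}$, which is precisely the structure defined in the statement.

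With (I) in hand, step (II) is essentially automatic: by definition $\gr(+_{D^{*_{\!y}}/C^*})=\ann^\natural(\gr(+_{D/B}))$, and since $\ann^\natural$ sends $\mc{VB}$-relations over $A$ to $\mc{VB}$-relations over $A$, while $\gr(+_{D/B})$ is such a relation, its image is again a $\mc{VB}$-relation over $A$, i.e. a vector subbundle of $(D^{*_{\!y}})^3\to A^3$. Feeding (I) and (II) into \cref{def:DLACAVB} then produces the double vector bundle \cref{eq:DVBDual}, with side bundles $A$ and $C^*$ and core $B^*$.

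The hard part will be the claim, implicit in step (I), that the $\ann^\natural$-images are genuine bundle operations — single-valued and defined precisely on pairs lying over a common point of $C^*$ — and that the base of the new structure is indeed $C^*$: the functorial transport controls the algebraic identities but not, a priori, this ``graph of a map over $C^*$'' property. I would settle this by tracking supports, noting that the support of $\ann^\natural(\gr(+_{D/B}))$ coincides with that of $\gr(+_{D/B})$, namely $\gr(+_{A/M})$, which forces the new addition to cover $+_{A/M}$ and, through the dualized core sequence, to be fibred over $C^*$. Alternatively, and perhaps most transparently, one may choose a splitting $D\cong A\times_M B\times_M C$ (\cref{ex:DirectSumDVB}) and compute $\ann^\natural(\gr(+_{D/B}))$ outright; it comes out as $(a_1;\beta_1,\gamma)+(a_2;\beta_2,\gamma)=(a_1+a_2;\beta_1+\beta_2,\gamma)$, manifestly the addition of a vector bundle over $C^*$ whose fibre over $\gamma\in C^*_m$ is $A_m\oplus B^*_m$ and whose core is $B^*$, in agreement with \cref{eq:DVBDual}.
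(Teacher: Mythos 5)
Your proposal is correct and follows essentially the same route as the paper, whose proof is just the two citations you expand upon: it deduces the dual structure from \cref{def:DLACAVB} together with the compatibility of $\ann^\natural$ with composition of $\mc{VB}$-relations (\cref{lem:ann}, from \cite{LiBland:2011vqa}). Your additional care about single-valuedness and the base being $C^*$, settled by the explicit computation in a splitting, is a worthwhile elaboration of what the paper leaves implicit, but it is not a different argument.
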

\begin{proof}
This follows from \cref{def:DLACAVB} and \cite[Lemma A.2.]{LiBland:2011vqa}.
\end{proof}
A similar proposition holds for the dual $D^{*_{\!x}}\to B$.

\begin{example}[Cotangent bundle of a vector bundle]\label{ex:CotDVB}
Taking the dual of the tangent double vector bundle \labelcref{eq:TngDVB} over $B$, and using the identification $(TB)^{*_{\!x}}\cong T^*B$, we get the cotangent double vector bundle
\begin{equation}\label{eq:CotDVB}\begin{tikzpicture}
\mmat{m}{
T^*B&B\\
B^*&M\\
};
\path[->] (m-1-1)	edge (m-1-2)
				edge (m-2-1);
\path[<-] (m-2-2)	edge (m-1-2)
				edge (m-2-1);
\draw (0,0) node (c) {$T^*M$};
\path[left hook->] (c) edge (m-1-1);
\end{tikzpicture}\end{equation}

\end{example}

\begin{example}\label{ex:SecDualTng}
If we take the dual of the tangent double vector bundle \labelcref{eq:TngDVB} over $TM$ instead, the result is canonically isomorphic to 
$$\begin{tikzpicture}
\mmat{m}{
TB^*&B^*\\
TM&M\\
};
\path[->] (m-1-1)	edge (m-1-2)
				edge (m-2-1);
\path[<-] (m-2-2)	edge (m-1-2)
				edge (m-2-1);
\draw (0,0) node (c) {$B^*$};
\path[left hook->] (c) edge (m-1-1);
\end{tikzpicture}$$
That is, $(TB)^{*_{\!y}}=T(B^*)$.
\end{example}

\subsubsection{Linear connections and double vector bundles}

An Ehresmann connection on a fibre bundle $B\xrightarrow{q_{B/M}} M$ is defined to be a subbundle $K\subseteq TB$ such that $\d q_{B/M}:K\to TM$ is a fibrewise isomorphism \cite{Ehresmann:1995ts}. Inverting this isomorphism defines an injection $\lambda_K:\mf{X}(M)\to \mf{X}(B)$, i.e. a way of lifting vector fields on $M$ to vector fields on $B$. When $B\to M$ is a vector bundle, then $K$ defines a linear connection precisely when $K\to TM$ is a vector subbundle of $TB\to TM$, i.e. 
$$\begin{tikzpicture}
\mmat{m1} at (-2,0){K&B\\ TM& M\\};
\path[->] (m1-1-1)	edge (m1-1-2)
				edge (m1-2-1);
\path[<-] (m1-2-2)	edge (m1-1-2)
				edge (m1-2-1);
\mmat{m2} at (2,0) {TB&B\\ TM& M\\};
\path[->] (m2-1-1)	edge (m2-1-2)
				edge (m2-2-1);
\path[<-] (m2-2-2)	edge (m2-1-2)
				edge (m2-2-1);
\draw (0,0) node {$\subseteq$} (m2);
\draw (2,0) node (c2) {$B$};
\path[left hook->] (c2) edge (m2-1-1);
\end{tikzpicture}
$$
is a double vector subbundle \cite{Konieczna:1999vh,Pradines:1974tc}. More generally, we have
\begin{proposition}\label{prop:DVSBandConnect}
Suppose $S\subseteq M$ is a submanifold, $W\to S$ and $C\to S$ are vector subbundles of $B\rvert_S\to S$, and $E\to S$ is a vector subbundle of $TM\rvert_S\to S$.
Let $$\widetilde{\Gamma(W)}:=\{\sigma\in\Gamma(B)\text{ such that } \sigma\rvert_S\in\Gamma(W)\}.$$
Then double vector subbundles 
\begin{equation}\label{eq:DVBforConn}\begin{tikzpicture}
\mmat[5em]{m1} at (-2.5,0){K&W\\ E& S\\};
\path[->] (m1-1-1)	edge node {$q_{K/W}$} (m1-1-2)
				edge node[swap] {$q_{K/E}$} (m1-2-1);
\path[<-] (m1-2-2)	edge (m1-1-2)
				edge (m1-2-1);
\mmat[5em]{m2} at (3.5,0) {TB&B\\ TM& M\\};
\path[->] (m2-1-1)	edge node {$q_{TB/B}$} (m2-1-2)
				edge node[swap] {$q_{\scriptscriptstyle TB/TM}$} (m2-2-1);
\path[<-] (m2-2-2)	edge (m2-1-2)
				edge (m2-2-1);
\draw (0,0) node {$\subseteq$} (m2);
\draw (-2.5,0) node (c) {$C$};
\path[left hook->] (c) edge (m1-1-1);
\draw (3.5,0) node (c2) {$B$};
\path[left hook->] (c2) edge (m2-1-1);
\end{tikzpicture}
\end{equation}
are in one-to-one correspondence with linear maps \begin{equation}\label{eq:NabForK}\nabla:\widetilde{\Gamma(W)}\to \Gamma(E^*\otimes \frac{B\rvert_S}{C})\end{equation} satisfying the Leibniz rule 
\begin{equation}\label{eq:NabForKLeib}\nabla f\sigma=f\nabla\sigma+\d\!f\otimes \sigma,\end{equation}
 for any $f\in C^\infty(M)$.
\end{proposition}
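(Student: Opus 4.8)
The plan is to realise both sides of the correspondence through the linear and core sections of the tangent double vector bundle, as set up in \cref{ex:TngDVB2}, and to match them using the splitting $\sigma\mapsto\sigma_T$ of the exact sequence \cref{eq:ShrtLinSeq}. Throughout I use that the core of $TB$ is $B$, that the core sections of $TB\to TM$ are the $\sigma_C$, and that, by \cref{def:AltDVBdef} together with \cref{eq:ShrtLinSeq}, a double vector subbundle of $TB$ is determined by its two side bundles, its core, and its space of linear sections.

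From $K$ to $\nabla$: given $\sigma\in\widetilde{\Gamma(W)}$, its restriction $\sigma|_S$ is a section of $W$, so the sequence \cref{eq:ShrtLinSeq} for $K$ supplies a linear section $\lambda\in\Gamma_l(K,E)$ covering $\sigma|_S$. Both $\lambda$ and the restriction $\sigma_T|_E$ are linear sections of $TB|_S\to E$ covering the same section $\sigma|_S$ of $B|_S$, so by exactness their difference $\sigma_T|_E-\lambda$ lies in $\Gamma(E^*\otimes B|_S)$, i.e. is core-valued. I set $\nabla\sigma:=[\,\sigma_T|_E-\lambda\,]\in\Gamma(E^*\otimes B|_S/C)$; this is independent of the choice of $\lambda$, since two choices differ by an element of $\Gamma(E^*\otimes C)$, which dies in the quotient. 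The Leibniz rule \cref{eq:NabForKLeib} then follows immediately from the product rule $(f\sigma)_T=f_T\cdot\sigma_C+f_C\cdot\sigma_T$ of \cref{ex:TngDVB2}: choosing $f_C\cdot\lambda$ as a lift for $f\sigma$, the term $f_C\cdot(\sigma_T-\lambda)$ reproduces $f\nabla\sigma$, while the core-valued term $f_T\cdot\sigma_C$ contributes exactly $\d\!f\otimes\sigma$.

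From $\nabla$ to $K$: reversing the construction, I declare the linear sections of the sought-for $K$ to be $\{\,\sigma_T|_E-\widetilde{\nabla\sigma}\mid\sigma\in\widetilde{\Gamma(W)}\,\}$ together with $\Gamma(E^*\otimes C)$, where $\widetilde{\nabla\sigma}\in\Gamma(E^*\otimes B|_S)$ is any lift of $\nabla\sigma$ regarded, via the injection in \cref{eq:ShrtLinSeq}, as a core-valued linear section of $TB|_S\to E$. Taking $K$ to be the union of the images of these linear sections together with the core $C$ produces a subset of $TB|_S$, and one checks using \cref{def:AltDVBdef} that it is stable under both additions of $TB$, hence is a double vector subbundle with side bundles $W$ and $E$ and core $C$. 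That the two assignments are mutually inverse is then immediate from the defining relations, since plugging the formula for $K$ into the recipe for $\nabla$ returns $\nabla$, and the linear sections used to build $K$ are exactly those realising $\nabla$ in the first construction.

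The step I expect to be the main obstacle is verifying that the sections $\sigma_T|_E-\widetilde{\nabla\sigma}$ assemble into a locally free submodule of the correct rank, equivalently that the resulting $K$ is a genuine constant-rank smooth subbundle rather than merely a subset. The delicate point concerns sections that restrict to $0$ along $S$: for such $\sigma$ the expression $\sigma_T|_E-\widetilde{\nabla\sigma}$ must land (modulo $C$) in the intrinsic first-order jet of $\sigma$ along $S$, namely the well-defined vertical part of $\sigma_T$, and this is exactly where the Leibniz rule is indispensable, since it is the Leibniz rule that ties $\nabla\sigma$ to this canonical derivative and so makes the construction independent of how sections are extended off $S$. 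Once this compatibility is in place, a local frame of $B$ adapted to $W$ and $C$ exhibits $K$ as a smooth double vector subbundle and confirms that the correspondence is a bijection.
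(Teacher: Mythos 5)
Your argument is correct and takes essentially the same route as the paper's proof: you identify $\nabla\sigma$ with the class of $\sigma_T\rvert_E$ modulo $K$ in $B\rvert_S/C$ (your difference $\sigma_T\rvert_E-\lambda$ is precisely the paper's total-quotient map $q_K$ applied to $\sigma_T\rvert_E$), you derive the Leibniz rule from $(f\sigma)_T=f_C\cdot\sigma_T+f_T\cdot\sigma_C$, and you reconstruct $K$ from the linear sections it must contain, with the same observation that the Leibniz rule is what makes the construction independent of extensions and yields a constant-rank subbundle. The only difference is cosmetic: you work directly with the exact sequence of linear sections, whereas the paper packages the same data as a morphism of double vector bundles onto the quotient of cores.
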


\begin{remark}As pointed out by a referee, when $W=B$,
then $K$ is a \emph{linear distribution} on $B$, and a similar one-to-one correspondence is described in Drummond-Jotz-Ortiz \cite{Drummond:2013wr}.

When $W=B$ and $E=TM$, then $K$ can be seen as a multiplicative distribution on the linear groupoid $B\rightrightarrows M$ (the vector bundle seen as a Lie groupoid). In this case, $\nabla$ is the Spencer operator studied by Crainic-Salazar-Struchiner \cite{Crainic:2012th,Salazar:2013tk}.  Jotz-Ortiz also study such \emph{foliated groupoids} in \cite{Jotz:2011wq}, where they classify them in terms of similar connection type data.

If, additionally, $C$ is trivial, then \cref{prop:DVSBandConnect} reduces to the usual relationship between linear Ehresmann connections on $B$ and covariant differential operators $\nabla:\Gamma(B)\to \Omega^1(M,B)$.
\end{remark}

The proof of this proposition is essentially the same as for linear Ehresmann connections, but we include it here for the curious reader.

\begin{proof}
Suppose first that \labelcref{eq:DVBforConn} is a double vector bundle. The space $$TB\rvert_{\scriptscriptstyle E\!\times_{\!S} \!W}:=q_{TB/B}^{-1}(W)\cap q_{TB/TM}^{-1}(E)$$ is a double vector bundle with the same side bundles as $K$, but with a different core:
$$\begin{tikzpicture}
\mmat[4em]{m1} at (-2.5,0){K&W\\ E& S\\};
\path[->] (m1-1-1)	edge  (m1-1-2)
				edge  (m1-2-1);
\path[<-] (m1-2-2)	edge (m1-1-2)
				edge (m1-2-1);
\mmat[4em]{m2} at (2.5,0) {TB\rvert_{\scriptscriptstyle E\!\times_{\!S} \!W}&W\\ E& S\\};
\path[->] (m2-1-1)	edge (m2-1-2)
				edge (m2-2-1);
\path[<-] (m2-2-2)	edge (m2-1-2)
				edge (m2-2-1);
\draw (0,0) node {$\subseteq$} (m2);
\draw (-2.5,0) node (c) {$C$};
\path[left hook->] (c) edge (m1-1-1);
\draw (2.5,0) node (c2) {$B\rvert_S$};
\path[left hook->] (c2) edge (m2-1-1);
\end{tikzpicture}
$$
Thus the double quotient (in both the vertical and horizontal directions) of $TB\rvert_{\scriptscriptstyle E\!\times_{\!S} \!W}$ by $K$ is canonically isomorphic to the quotient of the core bundles, $B\rvert_S/C$ (see \cref{app:TotQuo,rem:DblQuot} for more details). The quotient map is a morphism of double vector bundles
\begin{equation}\label{eq:DVBtotQuo}\begin{tikzpicture}
\mmat[4em]{m1} at (-2.5,0){TB\rvert_{\scriptscriptstyle E\!\times_{\!S} \!W}&W\\ E& S\\};
\path[->] (m1-1-1)	edge  (m1-1-2)
				edge  (m1-2-1);
\path[<-] (m1-2-2)	edge (m1-1-2)
				edge (m1-2-1);
\mmat[4em]{m2} at (2.5,0) {(B\rvert_S)/C&S\\ S& S\\};
\path[->] (m2-1-1)	edge (m2-1-2)
				edge (m2-2-1);
\path[<-] (m2-2-2)	edge (m2-1-2)
				edge (m2-2-1);
\draw (-2.5,0) node (c) {$B\rvert_S$};
\path[left hook->] (c) edge (m1-1-1);
\draw (2.5,0) node (c2) {$(B\rvert_S)/C$};
\path[left hook->] (c2) edge (m2-1-1);
\path[->] (m1) edge node {$q_K$}(m2);
\end{tikzpicture}
\end{equation}
canonically associated to $K$. In fact one may recover $K$ as the kernel of $q_K$. In this way, there is a one-to-one correspondence between double vector subbundles of the form \labelcref{eq:DVBforConn}, and morphisms of double vector bundles of the form \labelcref{eq:DVBtotQuo} whose restriction to the core
\begin{equation}\label{eq:qKCoreRest}q_K\rvert_{(B\rvert_S)}:B\rvert_S\to (B\rvert_S)/C\end{equation}
is the canonical projection.

As explained in \cref{ex:TngDVB2}, the space $\Gamma(TB,TM)$ is spanned by sections of the form $\sigma_T$ and $\tau_C$, for $\sigma,\tau\in\Gamma(B)$. Similarly, $\Gamma(TB\rvert_{\scriptscriptstyle E\!\times_{\!S} \!W},E)$ is spanned by sections of the form $\sigma_T\rvert_E$ and $\tau_C\rvert_E$ for $\sigma\in\widetilde{\Gamma(W)}$ and $\tau\in\Gamma(B)$. Now the composition of $q_K$ with any core section $\tau_C\rvert_E$ is just determined by the canonical projection \labelcref{eq:qKCoreRest}, and does not depend on $K$.
Therefore $q_K$ (and thus $K$) is entirely determined by the set of vector bundle maps 
$$\{q_K\circ \sigma_T\rvert_E:E\to (B\rvert_S)/C\text{ for }\sigma\in\widetilde{\Gamma(W)}\}.$$

We define $$\nabla:\widetilde{\Gamma(W)}\to \Gamma(E^*\otimes \frac{B\rvert_S}{C})$$ by the formula 
\begin{equation}\label{eq:NabFormFromK}\nabla\sigma:=q_K\circ\sigma_T\rvert_E,\quad \sigma\in\widetilde{\Gamma(W)},\end{equation}
 and we conclude that $K$ is entirely determined by $\nabla$.

Note that for any $f\in C^\infty(M)$, 
$$\nabla f\sigma=q_K\big((f\sigma)_T\big)=q_K\big(f_C\;\sigma_T+f_T\;\sigma_C\big)=f\nabla\sigma+\d\!f\otimes\sigma,$$
so $\nabla$ satisfies the requisite Leibniz rule, \cref{eq:NabForKLeib}.

 Conversely any map \labelcref{eq:NabForK} satisfying the Leibniz rule, \cref{eq:NabForKLeib}, arises from formula \labelcref{eq:NabFormFromK} for a unique double vector subbundle of the form \labelcref{eq:DVBforConn}, which proves the proposition.
\end{proof}

\subsubsection{$\mc{LA}$-vector bundles}
$\mc{LA}$-vector bundles are a concept due to Mackenzie \cite{Mackenzie:1998te,Mackenzie:1998ge} which encode the notion of a vector bundle in the category of Lie algebroids, or equivalently a Lie algebroid in the category of vector bundles. In \cite{gracia2010lie} it was shown that $\mc{LA}$-vector bundles are the correct context from which to study 2-term representations of Lie algebroids `up to homotopy' (note that \cite{gracia2010lie} also provides a very nice summary of $\mc{LA}$-vector bundle theory).

Suppose that 
\begin{equation}\label{eq:LAvb}\begin{tikzpicture}
\mmat{m}{D&B\\ A& M\\};
\path[->] (m-1-1)	edge (m-1-2)
				edge node[swap] {$q^D_A$} (m-2-1);
\path[<-] (m-2-2)	edge (m-1-2)
				edge (m-2-1);
\end{tikzpicture}\end{equation}
is an $\mc{LA}$-vector bundle. That is, $D\to A$ and $B\to M$ are vector bundles, \labelcref{eq:LAvb} is a morphism of vector bundles, $D\to B$ is a Lie algebroid, and $\gr(+_{D/A})\subset D^3$ is a Lie subalgebroid. 

\begin{proposition}\label{prop:AisLA}
There is a unique Lie algebroid structure on $A\to M$ such that the inclusion $0_{D/A}:A\to D$ and the projection $q_{D/A}:D\to A$ are both morphisms of Lie algebroids.
\end{proposition}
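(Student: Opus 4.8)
The plan is to work entirely at the level of sections, exploiting the decomposition of $\Gamma(D,B)$ into linear and core sections. By the analogue of the exact sequence \labelcref{eq:ShrtLinSeq} with the roles of $A$ and $B$ exchanged, there is a short exact sequence
$$0\to\Gamma(B^*\otimes C)\xrightarrow{\ \iota\ }\Gamma_l(D,B)\xrightarrow{\ q_{D/A}\ }\Gamma(A)\to0,$$
so every $a\in\Gamma(A)$ admits a linear lift $\tilde a\in\Gamma_l(D,B)$, unique up to $\iota(\Gamma(B^*\otimes C))$. The first task is to extract from the hypothesis that $\gr(+_{D/A})\subseteq D^3$ is a Lie subalgebroid the usual bracket grading of a $\mc{VB}$-algebroid. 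For instance, applying the subalgebroid condition to the diagonal sections $(\xi,\xi,\xi)$ and $(\eta,\eta,\eta)$ of $D^3\to B^3$ — whose restrictions to $\gr(+_{B/M})$ land in $\gr(+_{D/A})$ precisely because $\xi,\eta$ are additive, and recalling that the product bracket on $D^3$ is componentwise — shows that $[\xi,\eta]_D$ is again additive, i.e. linear. Analogous tests yield the full dictionary: $[\Gamma_l,\Gamma_l]\subseteq\Gamma_l$, $[\Gamma_l,\Gamma_C]\subseteq\Gamma_C$, $[\Gamma_C,\Gamma_C]=0$, together with the facts that the anchor $\mbf{a}_D$ of $D\to B$ sends a linear section to a linear vector field on $B$ covering a vector field on $M$, and a core section to a vertical vector field covering $0$ (cf. \cite{gracia2010lie}).

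With this in hand I would define
$$[a_1,a_2]_A:=q_{D/A}\big([\tilde a_1,\tilde a_2]_D\big),\qquad \mbf{a}_A(a):=\text{the base vector field of }\mbf{a}_D(\tilde a),$$
the first expression being meaningful because $[\tilde a_1,\tilde a_2]_D$ is linear by the grading. The crux of the proof is well-definedness, i.e. independence of the chosen lifts, which amounts to showing that $\ker q_{D/A}=\iota(\Gamma(B^*\otimes C))$ is preserved under bracketing with linear sections. Here I would use that this kernel is spanned over $C^\infty(M)$ by products $\ell_\beta\cdot c_{c_B}$, where $c\in\Gamma(C)$, $\beta\in\Gamma(B^*)$, and $\ell_\beta\in C^\infty(B)$ is the associated fibrewise-linear function. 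For any linear $\xi$, the Leibniz rule of $D\to B$ gives
$$[\ell_\beta\, c_{c_B},\xi]_D=\ell_\beta\,[c_{c_B},\xi]_D-\big(\mbf{a}_D(\xi)\cdot\ell_\beta\big)\,c_{c_B}.$$
Since $[c_{c_B},\xi]_D$ is a core section by the grading and $\mbf{a}_D(\xi)\cdot\ell_\beta$ is again fibrewise-linear (as $\mbf{a}_D(\xi)$ is a linear vector field), both terms lie in $\iota(\Gamma(B^*\otimes C))$. Thus altering a lift by an element of $\ker q_{D/A}$ does not change $q_{D/A}[\tilde a_1,\tilde a_2]_D$; the same computation, together with $C^\infty(B)$-linearity of $\mbf{a}_D$, shows such a section has anchor covering the zero field, so $\mbf{a}_A$ is well-defined.

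Next I would verify the Lie algebroid axioms on $A$. Skew-symmetry is immediate from that of $[\cdot,\cdot]_D$ and the linearity of $q_{D/A}$; the Jacobi identity follows because linear lifts bracket within $\Gamma_l(D,B)$ and $q_{D/A}$ carries the Jacobiator of the $\tilde a_i$ to that of the $a_i$. For the Leibniz rule, a linear lift of $fa_2$ (with $f\in C^\infty(M)$) is $(q_{B/M}^*f)\,\tilde a_2$, and the Leibniz rule of $D\to B$ combined with $\mbf{a}_D(\tilde a_1)\cdot q_{B/M}^*f=q_{B/M}^*(\mbf{a}_A(a_1)\cdot f)$ yields $[a_1,fa_2]_A=f[a_1,a_2]_A+(\mbf{a}_A(a_1)f)\,a_2$ after projecting. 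The morphism statements then hold essentially by construction: a linear lift satisfies $\tilde a\sim_{q_{D/A}}a$ and $0_{D/A}\circ a=\tilde a\circ 0_{B/M}$, hence $a\sim_{0_{D/A}}\tilde a$, and one checks the subalgebroid condition for $\gr(q_{D/A})$ and $\gr(0_{D/A})$ on the spanning linear and core sections (core sections map to the zero section of $A$, and the grading controls their brackets). Finally, uniqueness is forced: for any Lie algebroid structure on $A$ making $q_{D/A}$ a morphism, relating the linear lifts $\tilde a_i$ gives $[a_1,a_2]_A=q_{D/A}[\tilde a_1,\tilde a_2]_D$, and then the anchor is determined by the bracket. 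The main obstacle is the bracket grading and the resulting well-definedness of $[\cdot,\cdot]_A$; once these are established, the remaining verifications are routine consequences of the Leibniz rule and the spanning property of linear and core sections.
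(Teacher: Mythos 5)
Your argument is correct in outline, but it takes a genuinely different route from the paper's. You work at the level of sections: you extract the bracket grading of a $\mc{VB}$-algebroid from the hypothesis that $\gr(+_{D/A})$ is a Lie subalgebroid, define $[a_1,a_2]_A=q_{D/A}[\tilde a_1,\tilde a_2]_D$ via linear lifts, and verify well-definedness and the axioms by hand --- essentially the Gracia-Saz--Mackenzie approach. The paper instead argues entirely inside the calculus of $\mc{LA}$-relations it has already set up: $\gr(-_{D/A})$ is obtained from $\gr(+_{D/A})$ by permuting factors and is therefore a Lie subalgebroid of $D^3\to B^3$; composing it with the diagonal $D_\Delta$ exhibits $0_{D/A}(A)$ as a Lie subalgebroid of $D\to B$, which yields existence and uniqueness of the structure on $A$ making $0_{D/A}$ a morphism in a single stroke; and $\gr(0_{D/A}\circ q_{D/A})=\gr(-_{D/A})\circ\gr(\Delta_D)$ is then an $\mc{LA}$-relation, which disposes of $q_{D/A}$. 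The paper's proof is shorter and formula-free; yours buys the explicit description of the induced bracket and anchor, which the paper only records afterwards in a remark. Two spots where your sketch is thinner than it should be, both routinely repairable: (i) the ``analogous tests'' for $[\Gamma_l,\Gamma_C]\subseteq\Gamma_C$ and $[\Gamma_C,\Gamma_C]=0$ cannot use diagonal sections, since core sections are not additive; one needs test sections such as $(c_{c_B},c_{c_B},0)$ and $(0,c_{c_B},-c_{c_B})$, the latter being what forces the bracket of a linear with a core section to project to zero in $A$; (ii) for $\gr(0_{D/A})$, whose support is only the double zero locus, core sections of $D\to B$ do not restrict into the graph, so the subalgebroid condition cannot be checked ``on the spanning linear and core sections'' verbatim --- one must handle arbitrary sections restricting into the graph by splitting off a part vanishing on the support.
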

\begin{proof}

We let $\gr(-_{D/A}):D\times D\dasharrow D$ be the relation defined by $( d_1, d_2)\sim_{\gr(-_{D/A})} d$ if and only if $ d+_{D/A} d_2= d_1$. Since $\on{gr}(-_{D/A})=\{( d; d_1, d_2)\mid d+_{D/A} d_2= d_1\}\subseteq D\times D\times D$ is obtained from $\on{gr}(+_{D/A})$ by permuting factors, it is a Lie subalgebroid of $D^3\to B^3$. Moreover, the diagonal $D_\Delta\subset D\times D$ is Lie subalgebroid of $D^2\to B^2$. Therefore, the composition $\gr(-_{D/A})\circ D_\Delta$ is a Lie subalgebroid of $D\to B$. Since $0_{D/A}:A\to D$ embeds $A$ as $\gr(-_{D/A})\circ D_\Delta$, there exists a unique Lie algebroid structure on $A$ such that $0_{D/A}:A\to D$ is a morphism of Lie algebroids.

Moreover, since diagonal embedding $\Delta_D:D\to D\times D$, given by $ d\to( d, d)$ is a morphism of Lie algebroids from $D\to B$ to $D^2\to B^2$, the composition of relations $\gr(-_{D/A})\circ \gr(\Delta_D):D\dasharrow D$ is a $\mc{LA}$-relation. However $\gr(-_{D/A})\circ \gr(\Delta_D)=\gr(0_{D/A}\circ q_{D/A})$, which shows that $q_{D/A}:D\to A$ is a morphism of Lie algebroids.
\end{proof}

\begin{remark}
\Cref{prop:AisLA} shows that the Lie algebroid structure on $A$ is completely determined by the Lie algebroid structure on $D$. Indeed, if $\sigma_D,\tau_D\in\Gamma_l(D,B)$ and $\sigma_A,\tau_A\in\Gamma(A)$ are such that $\sigma_A=q_{D/A}(\sigma_D)$ and $\tau_A=q_{D/A}(\tau_D)$, then $$[\sigma_A,\tau_A]=q_{D/A}[\sigma_D,\tau_D].$$
\end{remark}

\begin{example}[Tangent prolongation]\label{ex:TngLAVB}
If $B$ is any vector bundle then writing the structural equations for $B$ as diagrams and applying the tangent functor we get corresponding diagrams in the category of Lie algebroids. For example, applying the tangent functor to the addition relation $\gr(+_{B/M}):B\times B\dasharrow B$ yields addition for $TB\to TM$, $\gr(+_{TB/TM}):=T\gr(+_{B/M}):TB\times_{TM} TB\to TB$. Since $T\gr(+_{B/M})\subset (TB)^3$ is a subalgebroid, 
 it follows directly from \cref{def:DLACAVB} that then the tangent double vector bundle \labelcref{eq:TngDVB}, $TB$ is a $\mc{LA}$-vector bundle. 

Meanwhile, if $A\to M$ is a Lie algebroid, then the flip, $(TA)^{flip}$, of the tangent double vector bundle is naturally an $\mc{LA}$-vector bundle, called the \emph{tangent prolongation} (or \emph{tangent lift}) of $A$.
$$\begin{tikzpicture}
\mmat{m}{
TA&TM\\
A&M\\
};
\path[->] (m-1-1)	edge (m-1-2)
				edge (m-2-1);
\path[<-] (m-2-2)	edge (m-1-2)
				edge (m-2-1);
\end{tikzpicture}$$
The Lie bracket on $\Gamma(TA,TM)$ can be defined as follows. For $\sigma,\tau\in\Gamma(A)$ we define
\begin{subequations}\label{eq:LATngPro}
\begin{align}
[\sigma_T,\tau_T]&=[\sigma,\tau]_T,\\
[\sigma_T,\tau_C]&=[\sigma,\tau]_C,\\
[\sigma_C,\tau_C]&=0.
\end{align}\end{subequations}
The bracket of arbitrary sections is defined by means of the Leibniz rule.

As an example, when $A=\g$ is a Lie algebra, then $(TA)^{flip}=\g\ltimes\g$, as a Lie algebra.
\end{example}

\subsection{The tangent prolongation of a Courant algebroid}\label{sec:TngPro}

Let $\mbb{E}$ be a Courant algebroid over $M$. In \cite{Boumaiza:2009eg}  Boumaiza and  Zaalani showed that $T\mbb{E}\to TM$ carries a canonical Courant algebroid structure.  In this section we recall this so-called \emph{tangent prolongation of Courant algebroids}.

Recall from \cref{ex:TngDVB2} that  any section $\sigma\in\Gamma(\mbb{E},M)$, defines two sections $\sigma_C,\sigma_T\in\Gamma(T\mbb{E},TM)$ called the core and tangent lift of $\sigma$, respectively. Note also that $\{\sigma^i_C,\sigma^i_T\}$ is a local basis for $T\mbb{E}\to TM$ whenever $\{\sigma^i\}$ is a local basis for $\mbb{E}\to M$.

\begin{proposition}\label{prop:TngProCA}
The tangent bundle $T\mbb{E}$ of a Courant algebroid $\mbb{E}\to M$,
$$\begin{tikzpicture}
\mmat{m}{T\mbb{E} &\mbb{E}\\ TM&M\\};
\path[->]
	(m-1-1) edge (m-1-2)
		edge (m-2-1);
\path[<-] 
	(m-2-2) edge (m-1-2)
		edge (m-2-1);
\end{tikzpicture}$$
carries a unique Courant algebroid structure over $TM$ such that the pairing and bracket satisfy
\begin{subequations}\label[pluralequation]{eq:TEPairBrk}
\begin{align}
\label{eq:TEPair}\la \sigma_T,\tau_T\ra&=\la\sigma,\tau\ra_T & \la\sigma_T,\tau_C\ra&=\la\sigma,\tau\ra_C &
\la\sigma_C,\tau_T\ra&=\la\sigma,\tau\ra_C & \la\sigma_C,\tau_C\ra&=0\\
\label{eq:TEBrk}\Cour{\sigma_T,\tau_T}&=\Cour{\sigma,\tau}_T & \Cour{\sigma_T,\tau_C}&=\Cour{\sigma,\tau}_C  & 
\Cour{\sigma_C,\tau_T}&=\Cour{\sigma,\tau}_C& \Cour{\sigma_C,\tau_C}&=0
\end{align}\end{subequations}
 and the anchor map satisfies
$$\mbf{a}(\sigma_T)=\mbf{a}(\sigma)_T \quad\quad \mbf{a}(\sigma_C) = \mbf{a}(\sigma)_C,$$
for any sections $\sigma,\tau\in\Gamma(\mbb{E},M)$.
\end{proposition}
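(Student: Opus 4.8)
The plan is to construct the structure on generators and then verify the axioms of \cref{def:CA} by reducing each one to the corresponding axiom on $\mbb{E}$. Recall from \cref{ex:TngDVB2} that the lifts $\{\sigma_C,\sigma_T\mid\sigma\in\Gamma(\mbb{E})\}$ span $\Gamma(T\mbb{E},TM)$ as a $C^\infty(TM)$-module, that $C^\infty(TM)$ is generated by the lifted functions $f_C=q_{TM/M}^*f$ and $f_T=\d\!f$, and that these obey the product rules $(fg)_C=f_Cg_C$, $(fg)_T=f_Tg_C+f_Cg_T$ of \cref{eq:TCDer}. I would first settle uniqueness: the pairing and anchor of any Courant algebroid are tensorial (i.e.\ $C^\infty(TM)$-bilinear, resp.\ linear), and the bracket obeys the Leibniz rules c4)--c5); hence prescribing the values \cref{eq:TEPairBrk} together with the anchor formula on the generators $\sigma_C,\sigma_T$ determines all three operations on $\Gamma(T\mbb{E},TM)$, so at most one such structure exists.

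For existence I would adopt \cref{eq:TEPairBrk} and the anchor formula as definitions on generators, extending the pairing and anchor by $C^\infty(TM)$-(bi)linearity and the bracket by imposing c4)--c5). The first task is well-definedness, i.e.\ consistency with the module relations \cref{eq:TCDer}; this is a short computation. For the pairing, for instance, $\la(f\sigma)_T,\tau_T\ra=\la f_T\sigma_C+f_C\sigma_T,\tau_T\ra=f_T\la\sigma,\tau\ra_C+f_C\la\sigma,\tau\ra_T=\bigl(f\la\sigma,\tau\ra\bigr)_T=\la f\sigma,\tau\ra_T$, and the remaining cases, as well as the analogous checks for anchor and bracket, are identical. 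Conceptually, the rules \cref{eq:TEPairBrk} express exactly that the pairing, bracket and anchor on $T\mbb{E}$ are the \emph{complete lifts} (in the sense of Yano--Ishihara) of those on $\mbb{E}$, with $\sigma_T,f_T$ the complete lifts and $\sigma_C,f_C$ the vertical lifts; this is the structural reason the formulas cohere. I would also record here that the new pairing is nondegenerate: in a frame $\{\sigma^i_C,\sigma^i_T\}$ its Gram matrix is block anti-triangular with invertible off-diagonal block $\la\sigma^i,\sigma^j\ra_C$, hence a genuine bundle metric.

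With the operations in place I would verify c1)--c3). Axioms c2), c3) (and the derived c4)--c5)) are $\mbb{R}$-multilinear and first order, so by the Leibniz rules it suffices to test them on the generators $\sigma_C,\sigma_T$. For c1) I would first use c2)--c5) to show that the Jacobiator of the bracket on $T\mbb{E}$ is $C^\infty(TM)$-linear in each argument --- a standard consequence of metric-compatibility and the Leibniz rules --- so that it too can be tested on generators. On such inputs, the lift rules \cref{eq:TEBrk} convert every nested bracket into a $T$- or $C$-lift of the corresponding nested bracket on $\mbb{E}$; thus each axiom on $T\mbb{E}$ splits into a ``$T$-component'' and a ``$C$-component,'' and after stripping the lift each component is precisely the same axiom evaluated on $\sigma,\tau,\upsilon\in\Gamma(\mbb{E})$, which holds by hypothesis. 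For c3) I would additionally use that the dual anchor $\mbf{a}^*\colon T^*(TM)\to T\mbb{E}$ and the de Rham differential on $TM$ interact compatibly with the two kinds of lifts (again an instance of complete-lift functoriality), so that $\mbf{a}^*\d\la U,V\ra$ likewise decomposes into matching $T$- and $C$-components.

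The main obstacle is the bookkeeping in c1) together with the verification that the Leibniz-extended bracket is genuinely well defined --- insensitive both to the way a section is expressed in the generators $\sigma_C,\sigma_T$ and to multiplication by the two species of functions $f_C,f_T$ --- since the $T$-lift mixes the two types of generators and so \cref{eq:TCDer} must be applied with care. In principle this case analysis can be bypassed by invoking functoriality of the complete lift directly: applying the tangent functor to the defining bundle maps and to the (first-order) bracket of $\mbb{E}$ and reading off the $C$- and $T$-components automatically transports c1)--c3) from $\mbb{E}$ to $T\mbb{E}$, exactly as the Lie algebroid case is handled in \cref{ex:TngLAVB}.
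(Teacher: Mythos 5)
Your proposal is correct in outline, but it takes a genuinely different route from the paper, which offers no in-text argument at all: the paper's entire proof is a citation of \cite[Proposition A.0.3]{LiBland:2012un} (the result itself going back to Boumaiza--Zaalani \cite{Boumaiza:2009eg}). Your direct generators-and-relations verification --- define the three operations on the lifts $\sigma_C,\sigma_T$, check consistency with the module relations \labelcref{eq:TCDer}, observe nondegeneracy from the block anti-triangular Gram matrix, and reduce each axiom of \cref{def:CA} to the corresponding axiom on $\mbb{E}$ after establishing that c1)--c3) are tensorial or Leibniz-extendable in each slot --- is essentially the proof the cited reference supplies, so what you buy is self-containedness at the cost of the bookkeeping you honestly flag. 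Two of the steps you defer deserve explicit mention if you write this out in full. First, the reduction of c2) to generators uses the derivation rules $X_Tf_C=(Xf)_C$, $X_Tf_T=(Xf)_T$, $X_Cf_C=0$, $X_Cf_T=(Xf)_C$ for lifted vector fields acting on lifted functions; these are not recorded in \cref{ex:TngDVB2} and must be stated. Second, for c3) you need the identity $\mbf{a}_{T\mbb{E}}^*\bigl(\d(g_T)\bigr)=(\mbf{a}^*\d g)_T$ and its $C$-lift analogue, which follows from the description of the metric and anchor on $T\mbb{E}$ in \cref{rem:TangLiftTngProDef} (the metric is $\d Q_{\la\cdot,\cdot\ra}$ and the anchor is $\d\mbf{a}$, so the dual anchor is the tangent lift of the dual anchor); without this c3) does not close. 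Neither point is a gap in substance --- both are instances of the complete-lift functoriality you invoke --- but they are the places where a reader would otherwise have to take your word for it.
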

\begin{proof}
The fact that $T\mbb{E}\to TM$ is a Courant algebroid follows  from \cite[Proposition A.0.3]{LiBland:2012un}. 
\end{proof}

\begin{remark}\label{rem:TangLiftTngProDef}
One may define the pairing and anchor map without reference to core and tangent lift sections. If 
$$\begin{tikzpicture}
\mmat{m1} at (-1.5,0) {\mbb{E} \\ M\\};
\path[->]
	(m1-1-1) edge (m1-2-1);
\mmat{m2} at (1.5,0) {TM\\ M\\};
\path[->]
	(m2-1-1) edge (m2-2-1);
		
\path[->,bend left= 15] (m1-1-1) edge node {$\mbf{a}$} (m2-1-1);
\end{tikzpicture}$$
 is the anchor map for $\mbb{E}\to M$, the anchor map for $T\mbb{E}\to TM$ is obtained by applying the tangent functor,
$$
\begin{tikzpicture}[cross line/.style={preaction={draw=white, -,
           line width=6pt}},]
\mmat{m1} at (-2.5,0) {T\mbb{E}&\mbb{E}\\ TM& M\\};
\path[->] (m1-1-1)	edge (m1-1-2)
				edge (m1-2-1);
\path[<-] (m1-2-2)	edge (m1-1-2)
				edge (m1-2-1);
\mmat{m2} at (2.5,0) {T^2M&TM\\ TM& M\\};
\path[->] (m2-1-1)	edge (m2-1-2)
				edge (m2-2-1);
\path[<-] (m2-2-2)	edge (m2-1-2)
				edge (m2-2-1);
				
\path[->] (m1) edge node {$\mbf{a}_{T\mbb{E}}$} (m2);
\path[->, bend left =30] (m1-1-2) edge node {$\mbf{a}$} (m2-1-2);
\path[->, bend left =30] (m1-1-1) edge [cross line] node {$\d\mbf{a}$} (m2-1-1);
\end{tikzpicture}
$$

Meanwhile, as reviewed in \cref{ex:SecDualTng}, $(T\mbb{E})^{*_{\!y}}\cong T(\mbb{E}^*)$. The fibre metric on $\mbb{E}$ defines an isomorphism $Q_{\la\cdot,\cdot\ra}:\mbb{E}\to \mbb{E}^*$, which lifts to an isomorphism of double vector bundles $$\d Q_{\la\cdot,\cdot\ra}:T\mbb{E}\to T(\mbb{E}^*)\cong (T\mbb{E})^{*_{\!y}}$$ via the tangent functor. This latter isomorphism defines the metric on the fibres of $T\mbb{E}\to TM$.

\end{remark}

\begin{example}[Quadratic Lie algebra]\label{ex:quadLie}
Suppose that $\mbb{E}=\mf{d}$ is a quadratic Lie algebra. Then $T\mbb{E}=T\mf{d}:=\mf{d}\ltimes\mf{d}$, where the bracket and pairing are given by 
\begin{align*}
[(\xi,\xi'),(\eta,\eta')]_{\mf{d}\ltimes\mf{d}}&=([\xi,\eta]_{\mf{d}},[\xi,\eta']_{\mf{d}}+[\xi',\eta]_{\mf{d}})\\
\la(\xi,\xi'),(\eta,\eta')\ra_{\mf{d}\ltimes\mf{d}}&=\la \xi,\eta'\ra_{\mf{d}}+\la \xi',\eta\ra_{\mf{d}}
\end{align*}
for $\xi,\xi',\eta,\eta'\in\mf{d}$.
\end{example}

\begin{example}[Action Courant algebroids]
Suppose that the quadratic Lie algebra $\mf{d}$ acts on $M$ with coisotropic stabilizers, and $\mbb{E}=\mf{d}\times M$ is the corresponding action Courant algebroid \cite{LiBland:2009ul}. The natural action of the Lie algebra $T\mf{d}$ from \cref{ex:quadLie} on $TM$ has coisotropic stabilizers, so $T\mf{d}\times TM$ carries the structure of an action Courant algebroid. Moreover, the canonical identification $$T\mbb{E}\cong T\mf{d}\times TM$$ as double vector bundles is an isomorphism of Courant algebroids.
\end{example}

\begin{example}[Exact Courant algebroids]
The tangent lift $\alpha_T\in\Omega^k(TM)$ of a $k$-form $\alpha\in\Omega^k(M)$ is defined as follows \cite{Grabowski:1997vy,Bursztyn:2010wb,Yano:1973wm}:
Let $\uptau:\Omega^k(M)\to\Omega^{k-1}(TM)$ be defined by $$\uptau(\alpha)_X=q_{TM/M}^*\iota_X\alpha, \quad X\in TM,$$ where $q_{TM/M}:TM\to M$ is the bundle projection. Then 
\begin{equation}\label{eq:TngLiftForm}\alpha_T=\d(\uptau(\alpha))+\uptau(\d\alpha).\end{equation}
Note that when $\alpha\in\Omega^0(M)$ or $\alpha\in\Omega^1(M)$ this definition agrees with the `tangent lift' construction for a function or section described in \cref{ex:TngDVB2}.

For any vector field $X\in\mf{X}(M)$, we have $$\iota_{X_C}(\alpha_C)=q^*_{TM/M}\iota_X\alpha,\quad\iota_{X_T}(\alpha_T)=(\iota_X\alpha)_T.$$

Suppose that $\mbb{E}\to M$ is an exact Courant algebroid, and $s:TM\to \mbb{E}$ an isotropic splitting, defining an isomorphism $\mbb{E}\cong \mbb{T}_\gamma M$ of Courant algebroids, where $\gamma\in\Omega^3_{cl}(M)$ is given by \cref{eq:SplitExtGam}. Since the anchor map for $T\mbb{E}$ is surjective, a dimension count shows that $T\mbb{E}$ is an exact Courant algebroid, while
\cref{eq:TEPair} shows that map $Ts:TTM\to T\mbb{E}$ defines a isotropic splitting. 

For $X,Y\in\mf{X}(M)$, the calculation
\begin{align*}
\Cour{Ts(X_T),Ts(Y_T)}&=\Cour{s(X)_T,s(Y)_T}\\
&=\Cour{s(X),s(Y)}_T\\
&=(s[X,Y]+\mbf{a}^*\iota_X\iota_Y\gamma)_T\\
&=Ts([X,Y]_T)+\mbf{a}^*\iota_{X_T}\iota_{Y_T}\gamma_T
\end{align*}
shows that the associated \v{S}evera 3-form is $\gamma_T\in\Omega^3(TM)$. Thus there is a canonical identification
 $$T\mbb{T}_\gamma M\cong \mbb{T}_{\gamma_T} TM.$$  
Since the 3-form, $\gamma\in\Omega^3_{cl}(M)$ was assumed to be closed, \cref{eq:TngLiftForm} shows that $\gamma_T=\d(\uptau(\gamma))$ is exact. Thus, all the Courant algebroids $T\mbb{T}_\gamma M$ are canonically isomorphic to $\mbb{T}TM$.

\end{example}

\subsubsection{$\mc{VB}$-Dirac structures in $T\mbb{E}$}

\begin{definition}

Suppose that $\mbb{E}\to M$ is a Courant algebroid and \begin{equation*}\label{eq:VBDirWSup}\begin{tikzpicture}
\mmat{m1} at (-2,0){L&W\\ E& S\\};
\path[->] (m1-1-1)	edge (m1-1-2)
				edge (m1-2-1);
\path[<-] (m1-2-2)	edge (m1-1-2)
				edge (m1-2-1);
\mmat{m2} at (2,0) {T\mbb{E}&\mbb{E}\\ TM& M\\};
\path[->] (m2-1-1)	edge (m2-1-2)
				edge (m2-2-1);
\path[<-] (m2-2-2)	edge (m2-1-2)
				edge (m2-2-1);
\draw (0,0) node {$\subseteq$};
\end{tikzpicture}\end{equation*}
is a double vector subbundle.
If $L\subset T\mbb{E}$ is also a Dirac structure with support on $E$, we call it a \emph{$\mc{VB}$-Dirac structure with support on $E$}. When $E=TM$, we simply call it a \emph{$\mc{VB}$-Dirac structure}.
\end{definition}

We automatically have at least one $\mc{VB}$-Dirac structure:
\begin{proposition}\label{prop:FreeVBDir} Consider the tangent prolongation $$\begin{tikzpicture}
\mmat{m}{T\mbb{E}&\mbb{E}\\ TM& M\\};
\path[->] (m-1-1)	edge node{$q_{T\mbb{E}/\mbb{E}}$} (m-1-2)
				edge (m-2-1);
\path[<-] (m-2-2)	edge (m-1-2)
				edge (m-2-1);
\end{tikzpicture}$$ of a Courant algebroid $\mbb{E}\to M$. Then
$T\mbb{E}_C:=\on{ker}(q_{T\mbb{E}/\mbb{E}})$ is a $\mc{VB}$-Dirac structure. Moreover, for any $\sigma\in\Gamma_l(T\mbb{E},TM)$ and $\tau\in\Gamma_l(T\mbb{E}_C,TM)$, we have \begin{equation}\label{eq:ECalmostideal}\Cour{\sigma,\tau}\in\Gamma_l(T\mbb{E}_C,TM).\end{equation}
\end{proposition}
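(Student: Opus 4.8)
The plan is to fix a local frame $\{\sigma^i\}$ of $\mbb E\to M$, work with the associated frame $\{\sigma^i_C,\sigma^i_T\}$ of $T\mbb E\to TM$, and reduce every assertion to the four identities of \cref{eq:TEPairBrk} together with the anchor rules $\mbf a(\sigma_C)=\mbf a(\sigma)_C$, $\mbf a(\sigma_T)=\mbf a(\sigma)_T$ from \cref{prop:TngProCA}. First I would identify $T\mbb E_C=\ker(q_{T\mbb E/\mbb E})$ concretely: a core section $\sigma_C$ is vertical for the tangent projection, so $q_{T\mbb E/\mbb E}(\sigma_C)=0$, and hence $T\mbb E_C$ is the subbundle of $T\mbb E\to TM$ of rank $\on{rank}\mbb E$ spanned over $C^\infty(TM)$ by the core sections $\{\sigma_C\mid\sigma\in\Gamma(\mbb E)\}$. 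As the kernel of the double vector bundle morphism $q_{T\mbb E/\mbb E}$ it is a double vector subbundle, with horizontal side bundle the zero subbundle $0\subseteq\mbb E$, vertical side bundle all of $TM$, and core canonically $\mbb E$; in particular its support is all of $TM$, so it is a candidate $\mc{VB}$-Dirac structure (not merely one with support).

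Next I would show $T\mbb E_C$ is Dirac. Isotropy is immediate: $\la\sigma_C,\tau_C\ra=0$ by \cref{eq:TEPair}, so $T\mbb E_C\subseteq T\mbb E_C^\perp$; since the lifted metric is nondegenerate and $\on{rank}T\mbb E_C=\on{rank}\mbb E=\tfrac12\on{rank}T\mbb E$, comparing ranks gives $T\mbb E_C^\perp=T\mbb E_C$, i.e.\ $T\mbb E_C$ is Lagrangian. For involutivity, note $\Gamma(T\mbb E_C)$ is the full $C^\infty(TM)$-module generated by the $\sigma_C$; using $\Cour{\sigma_C,\tau_C}=0$ (\cref{eq:TEBrk}), the Leibniz axioms c4)--c5) give, for $f,g\in C^\infty(TM)$,
\[\Cour{f\sigma_C,g\tau_C}=f\,\mbf a(\sigma_C)(g)\,\tau_C-g\,\mbf a(\tau_C)(f)\,\sigma_C,\]
the correction term $\la\sigma_C,g\tau_C\ra\,\mbf a^*(\d f)$ of c5) vanishing by isotropy. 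Both summands lie in $\Gamma(T\mbb E_C)$, so $T\mbb E_C$ is involutive and hence a $\mc{VB}$-Dirac structure.

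For the final claim I would reduce to generators. By the splitting of \cref{eq:ShrtLinSeq}, $\Gamma_l(T\mbb E,TM)$ is generated as a $C^\infty(M)$-module (acting via $f\mapsto f_C$) by the tangent lifts $\sigma_T$ and the sections $f_T\rho_C$, while $\Gamma_l(T\mbb E_C,TM)$ is generated by the $g_T\upsilon_C$. It therefore suffices to bracket generators. Using \cref{eq:TEBrk}, c4), and the tangent-lift identities $X_Tf_T=(Xf)_T$, $X_Cf_T=(Xf)_C$, $X_Cf_C=0$, one finds e.g.
\[\Cour{\sigma_T,g_T\upsilon_C}=g_T\,\Cour{\sigma,\upsilon}_C+(\mbf a(\sigma)g)_T\,\upsilon_C,\]
which is again of the form $h_T\zeta_C$, hence a linear section of $T\mbb E_C$; the bracket $\Cour{f_T\rho_C,g_T\upsilon_C}$ is treated identically via c5), the isotropy $\la\rho_C,\upsilon_C\ra=0$ killing the quadratic term. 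Finally one must propagate the $f_C$-coefficients through c4)--c5); the only delicate contribution is the anchor term $\la\sigma_T,g_T\upsilon_C\ra\,\mbf a^*(\d f_C)$ of c5), which I would control via the naturality $\mbf a^*(\d f_C)=(\mbf a^*\d f)_C$ (a consequence of the tangent-functor description of the lifted anchor and metric in \cref{rem:TangLiftTngProDef}); since $\la\sigma_T,g_T\upsilon_C\ra=g_T\la\sigma,\upsilon\ra_C$ is fibrewise linear, this term is again of the form $h_T\zeta_C$.

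I expect the main obstacle to be bookkeeping rather than a single hard idea: the conceptual content is entirely carried by $\Cour{\sigma_C,\tau_C}=0=\la\sigma_C,\tau_C\ra$ and $\Cour{\sigma_T,\tau_C}=\Cour{\sigma,\tau}_C$, but because linear sections form only a $C^\infty(M)$-module one cannot argue by naive $C^\infty(TM)$-bilinearity, and must instead track every Leibniz correction term and check in each case that it remains a $C^\infty(M)$-combination of terms $h_T\zeta_C$. The one genuinely external input is the compatibility of $\mbf a^*$ with core and tangent lifts, which is what guarantees the c5) correction term stays inside $T\mbb E_C$ while remaining linear.
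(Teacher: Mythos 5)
Your proposal is correct and follows essentially the same route as the paper's proof: both reduce everything to the generating core and tangent-lift sections, invoke \cref{eq:TEPairBrk} for the brackets and pairings of generators, and then track the Leibniz corrections from axioms c4)--c5) to see that every term stays a linear-function multiple of a core section. You simply make explicit the rank count for Lagrangianity and the decomposition of the first argument $\sigma$ into generators, steps the paper leaves implicit by keeping $\sigma$ arbitrary and applying c4) directly.
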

\begin{proof}
 Since $T\mbb{E}_C\subseteq T\mbb{E}$ is spanned by the core sections, the fact that it is a Dirac structure follows immediately from \cref{eq:TEBrk}.

 Given $\sigma\in\Gamma_l(T\mbb{E},TM)$ and $\tau\in\Gamma_l(T\mbb{E}_C,TM)$, choose a decomposition $$\tau=\sum_i f_i\tau_i,$$ where each $f_i$ is a linear function on $TM$ and each $\tau_i\in\Gamma_C(T\mbb{E},TM)$ is a core section. Then $$\Cour{\sigma,\tau}=\sum_i\big(f_i\Cour{\sigma,\tau_i}+(\mbf{a}(\sigma)f_i)\tau_i\big).$$ Since  $\tau_i$ is a core section, the second term on the right hand side certainly lies in $\Gamma_l(T\mbb{E}_C,TM)$. Moreover,  since \cref{eq:TEBrk} implies that $\Cour{\sigma,\tau_i}$ is a core section, we also have $f_i\Cour{\sigma,\tau_i}\in\Gamma_l(T\mbb{E}_C,TM)$.
 
\end{proof}

As a corollary of the above, we get a formula for the bracket of two linear sections which we will find useful later:
\begin{corollary}\label{cor:linSecBrk}
Suppose $\tilde\sigma,\tilde\tau\in\Gamma_l(T\mbb{E},TM)$. Since $$\Gamma(\mbb{E})\xrightarrow{\sigma\to\sigma_T}\Gamma_l(T\mbb{E},TM)$$ splits the exact sequence \labelcref{eq:ShrtLinSeq}, $$0\to\Gamma(T^*M\otimes \mbb{E})\xrightarrow{i}\Gamma_l(T\mbb{E},TM)\xrightarrow{q_{T\mbb{E}/\mbb{E}}}\Gamma(\mbb{E})\to0,$$ there exist unique sections $\sigma',\tau'\in\Gamma(T^*M\otimes \mbb{E})$  such that 
$$\tilde\sigma=i\sigma'+\sigma_T,\quad\tilde\tau=i\tau'+\tau_T,$$
where $\sigma=q_{T\mbb{E}/\mbb{E}}(\tilde\sigma)\in\Gamma(\mbb{E})$ and $\tau=q_{T\mbb{E}/\mbb{E}}(\tilde\tau)\in\Gamma(\mbb{E})$. 

We have
\begin{equation}\label{eq:LinBrk}q_{T\mbb{E}/\mbb{E}}\big(\Cour{\tilde\sigma,\tilde\tau}\big)=\Cour{\sigma,\tau}+\mbf{a}^*\la \sigma',\tau\ra.\end{equation}
\end{corollary}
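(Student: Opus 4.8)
The plan is to expand $\Cour{\tilde\sigma,\tilde\tau}$ using the bilinearity of the Courant bracket on the two decompositions and to show that, after applying $q_{T\mbb{E}/\mbb{E}}$, only two of the four resulting terms survive. Writing
\[
\Cour{\tilde\sigma,\tilde\tau}=\Cour{\sigma_T,\tau_T}+\Cour{i\sigma',\tau_T}+\Cour{\sigma_T,i\tau'}+\Cour{i\sigma',i\tau'},
\]
the first term is handled immediately by \cref{eq:TEBrk}, which gives $\Cour{\sigma_T,\tau_T}=\Cour{\sigma,\tau}_T$, so that $q_{T\mbb{E}/\mbb{E}}\Cour{\sigma_T,\tau_T}=\Cour{\sigma,\tau}$, the first term on the right-hand side of \cref{eq:LinBrk}. (Each of the four terms is itself a linear section --- by \cref{eq:TEBrk} for the first and by \cref{prop:FreeVBDir} for the others --- so $q_{T\mbb{E}/\mbb{E}}$ may be applied term by term.)

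Next I would dispose of the terms annihilated by $q_{T\mbb{E}/\mbb{E}}$, i.e.\ those lying in $\Gamma_l(T\mbb{E}_C,TM)=\ker q_{T\mbb{E}/\mbb{E}}$. Since $i\sigma',i\tau'\in\Gamma_l(T\mbb{E}_C,TM)$ while $\sigma_T,\tau_T\in\Gamma_l(T\mbb{E},TM)$, \cref{eq:ECalmostideal} yields at once $\Cour{\sigma_T,i\tau'}\in\Gamma_l(T\mbb{E}_C,TM)$ and $\Cour{i\sigma',i\tau'}\in\Gamma_l(T\mbb{E}_C,TM)$, so both are killed by $q_{T\mbb{E}/\mbb{E}}$. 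The remaining cross term $\Cour{i\sigma',\tau_T}$ has its arguments in the wrong order for \cref{eq:ECalmostideal}, so I would rewrite it using axiom c3),
\[
\Cour{i\sigma',\tau_T}=\mbf{a}^*\d\la i\sigma',\tau_T\ra-\Cour{\tau_T,i\sigma'}.
\]
Here $\Cour{\tau_T,i\sigma'}\in\Gamma_l(T\mbb{E}_C,TM)$ by \cref{eq:ECalmostideal}, and hence $q_{T\mbb{E}/\mbb{E}}\Cour{i\sigma',\tau_T}=q_{T\mbb{E}/\mbb{E}}\big(\mbf{a}^*\d\la i\sigma',\tau_T\ra\big)$.

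The crux of the argument is to evaluate this last expression and show it equals $\mbf{a}^*\la\sigma',\tau\ra$. Writing $\sigma'=\sum_k\alpha_k\otimes s_k$, the description of the injection $i$ below \cref{eq:ShrtLinSeq} together with the Leibniz rule \cref{eq:TCDer} identifies $i\sigma'=\sum_k\ell_{\alpha_k}(s_k)_C$, where $\ell_{\alpha_k}\in C^\infty(TM)$ is the fibrewise-linear function determined by $\alpha_k$; then \cref{eq:TEPair} gives $\la i\sigma',\tau_T\ra=\sum_k\ell_{\alpha_k}\,q_{TM/M}^*\la s_k,\tau\ra$. The essential input is the compatibility of $\mbf{a}^*$ with the tangent functor recorded in \cref{rem:TangLiftTngProDef}, which yields
\[
\mbf{a}^*_{T\mbb{E}}\,\d(f_C)=(\mbf{a}^*\d f)_C,\qquad \mbf{a}^*_{T\mbb{E}}\,\d(f_T)=(\mbf{a}^*\d f)_T,\qquad f\in C^\infty(M);
\]
these I would verify by pairing both sides against the spanning sections $\{\rho_C,\rho_T\}$ and invoking \cref{eq:TEPair}. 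Expanding each $\alpha_k$ locally as $\sum_j p_{kj}\,\d q_{kj}$ gives $\ell_{\alpha_k}=\sum_j (p_{kj})_C(q_{kj})_T$, and applying these two relations together with the $C^\infty(TM)$-linearity of $\mbf{a}^*_{T\mbb{E}}$ and the Leibniz rule for $\d$, every term produced is either a linear function times a core section (killed by $q_{T\mbb{E}/\mbb{E}}$) or of the form $(\text{pullback})\cdot(\mbf{a}^*\d q_{kj})_T$; collecting the latter yields $q_{T\mbb{E}/\mbb{E}}\big(\mbf{a}^*\d\la i\sigma',\tau_T\ra\big)=\sum_k\la s_k,\tau\ra\,\mbf{a}^*\alpha_k=\mbf{a}^*\la\sigma',\tau\ra$. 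Adding this to the surviving first term gives \cref{eq:LinBrk}. I expect the main obstacle to be precisely this final computation: the bookkeeping of the core-versus-tangent-lift decomposition of $\mbf{a}^*\d\la i\sigma',\tau_T\ra$ and the confirmation that all core-valued contributions drop out under $q_{T\mbb{E}/\mbb{E}}$, for which the two tangent-functoriality relations displayed above are the decisive ingredient.
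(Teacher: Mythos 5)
Your argument is correct, and its skeleton is identical to the paper's: decompose $\tilde\sigma=\sigma_T+i\sigma'$, $\tilde\tau=\tau_T+i\tau'$, expand the bracket bilinearly, use axiom c3) to reverse $\Cour{i\sigma',\tau_T}$, and invoke \cref{prop:FreeVBDir} to discard the terms lying in $\Gamma_l(T\mbb{E}_C,TM)=\ker q_{T\mbb{E}/\mbb{E}}$. The one genuine difference is how the surviving correction term $\mbf{a}^*\d\la i\sigma',\tau_T\ra$ is evaluated. The paper never applies $q_{T\mbb{E}/\mbb{E}}$ directly; it pairs the whole bracket against an arbitrary core section $\upsilon_C$, so that this term becomes $\mbf{a}(\upsilon)_C\la i\sigma',\tau_T\ra$, the vertical derivative of the fibrewise-linear function $\la i\sigma',\tau_T\ra=\ell_{\la\sigma',\tau\ra}$, which returns $\la\la\sigma',\tau\ra,\mbf{a}(\upsilon)\ra$ with no coordinates needed. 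You instead compute the projection head-on via a local expansion and the tangent-functoriality identities $\mbf{a}^*_{T\mbb{E}}\,\d(f_C)=(\mbf{a}^*\d f)_C$ and $\mbf{a}^*_{T\mbb{E}}\,\d(f_T)=(\mbf{a}^*\d f)_T$; these identities are true and are verified exactly as you propose, by pairing against $\{\rho_C,\rho_T\}$ and using \cref{eq:TEPair} together with the standard action of complete and vertical lifts on $f_C$ and $f_T$. Both routes are sound: the paper's duality argument is shorter and avoids the bookkeeping, while yours isolates the compatibility of $\mbf{a}^*$ with the tangent functor as the decisive structural fact, which is a perfectly legitimate (and slightly more explicit) way to finish.
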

\begin{proof}
Note that the image of the injection $i:\Gamma(T^*M\otimes \mbb{E})\to\Gamma_l(T\mbb{E},TM)$ is precisely, $\Gamma_l(T\mbb{E}_C,TM)$.
Therefore, for any $\upsilon\in\Gamma(\mbb{E})$,  
\begin{align*}
\la q_{T\mbb{E}/\mbb{E}}\big( \Cour{\tilde\sigma,\tilde\tau}\big),\upsilon\ra_C&=\la\Cour{\tilde\sigma,\tilde\tau},\upsilon_C\ra\\
&=\la\Cour{i\sigma'+\sigma_T,i\tau'+\tau_T},\upsilon_C\ra\\
&=\la\Cour{\sigma,\tau}_T+\mbf{a}^*d\la i\sigma',\tau_T\ra-\Cour{\tau_T,i\sigma'}+\Cour{\sigma_T,i\tau'}+\Cour{i\sigma',i\tau'},\upsilon_C\ra\\
&=\la\Cour{\sigma,\tau},\upsilon\ra_C+\mbf{a}(\upsilon)_C\la i\sigma',\tau_T\ra
\end{align*}
The last line follows from \cref{prop:FreeVBDir}.
To simplify the final expression, note that $\mbf{a}(\upsilon)_C\in \mf{X}(TM)$ is just the vectical vector field corresponding to translation by the section $\mbf{a}(\upsilon)\in\Gamma(TM)$. Since $\la i\sigma',\tau_T\ra\in C^\infty(M)$ is a linear function, it follows that  
$$\mbf{a}(\upsilon)_C\la i\sigma',\tau_T\ra=\la\la \sigma',\tau\ra,\mbf{a}(\upsilon)\ra,$$ (where we interpret the right hand side, $\la\la \sigma',\tau\ra,\mbf{a}(\upsilon)\ra\in\Gamma(T^*M)$ as a linear function on $TM$). This concludes the proof.
\end{proof}

\begin{example}[\cite{Courant:1999ho} Tangent Lift of a Dirac structure (with support)]\label{ex:DirStrTngLf}
Suppose that $R\subseteq\mbb{E}$ is a Dirac structure with support on a submanifold $S\subset M$, then it follows immediately from \cref{eq:TEPair,eq:TEBrk} that $TR\subseteq T\mbb{E}$ is a $\mc{VB}$-Dirac structure with support on $TS\subseteq TM$. This was already observed by Courant in \cite{Courant:1999ho} (in the case where $S=M$).
\end{example}

\begin{proposition}\label{prop:VBDirIsLaVB}
Suppose that $$\begin{tikzpicture}
\mmat{m1} at (-2,0){L&W\\ E& M\\};
\path[->] (m1-1-1)	edge (m1-1-2)
				edge (m1-2-1);
\path[<-] (m1-2-2)	edge (m1-1-2)
				edge (m1-2-1);
\mmat{m2} at (2,0) {T\mbb{E}&\mbb{E}\\ TM& M\\};
\path[->] (m2-1-1)	edge (m2-1-2)
				edge (m2-2-1);
\path[<-] (m2-2-2)	edge (m2-1-2)
				edge (m2-2-1);
\draw (0,0) node {$\subseteq$};
\end{tikzpicture}$$
is a $\mc{VB}$-Dirac structure.
Then $L^{flip}$ is an $\mc{LA}$-vector bundle. In particular, $L\to E$ and $W\to M$ are Lie algebroids, and $L\to W$ is a Lie algebroid morphism.
\end{proposition}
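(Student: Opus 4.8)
The plan is to check the two conditions of \cref{def:DLACAVB} for the flipped diagram $L^{flip}$, whose side bundles are $W$ and $E=TM$. The first condition---that $L$ is a Lie algebroid over $E=TM$---is immediate: since $L\subseteq T\mbb{E}$ is a Dirac structure with full support, the restriction to $\Gamma(L)$ of the Courant bracket and anchor of the tangent prolongation $T\mbb{E}$ (\cref{prop:TngProCA}) gives $L\to TM$ the structure of a Lie algebroid. The real content is the second condition, namely that the graph of addition $\gr(+_{L/W})\subseteq L^3$ is a Lie subalgebroid of the product Lie algebroid $L^3\to (TM)^3$. Granting this, the remaining assertions are formal: \cref{prop:AisLA}, applied to the $\mc{LA}$-vector bundle $L^{flip}$ (whose horizontal side is $W$), shows that $W\to M$ is a Lie algebroid and that $q_{L/W}\colon L\to W$ is a morphism of Lie algebroids from $L\to TM$ to $W\to M$.

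To attack the subalgebroid claim I would first note that, because $L$ is a double vector subbundle, the addition $+_{L/W}$ is the restriction of $+_{T\mbb{E}/\mbb{E}}$, so that $\gr(+_{L/W})=\gr(+_{T\mbb{E}/\mbb{E}})\cap L^3$ and the base submanifold of $\gr(+_{L/W})$ in $(TM)^3$ is $\gr(+_{TM/M})$. By \cref{def:SubLA} it then suffices to show that whenever two sections of $L\to TM$ are $\gr(+_{L/W})$-related to pairs of sections, their Lie brackets are again related. The key observation is that the two distinguished classes of sections behave in complementary ways under $+_{L/W}$: a linear section $\tilde\sigma\in\Gamma_l(L,TM)$ is additive, so $\tilde\sigma\oplus\tilde\sigma\sim_{\gr(+_{L/W})}\tilde\sigma$, while a core section $\mu_C\in\Gamma_C(L,TM)$ is constant, so $\mu_C\oplus 0\sim_{\gr(+_{L/W})}\mu_C$ (and symmetrically). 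Since these sections generate $\Gamma(L,TM)$ as a $C^\infty(TM)$-module, and the module coefficients $f_T$ (linear) and $f_C$ (fibrewise constant) relate under $\gr(+_{TM/M})$ in exactly the same diagonal/constant fashion (\cref{ex:GrAdd}), it is enough to examine brackets of these generators.

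Here the bracket relations \cref{eq:TEBrk} do the work. Using involutivity of the Dirac structure $L$ together with $\Cour{\sigma_T,\tau_T}=\Cour{\sigma,\tau}_T$, $\Cour{\sigma_T,\tau_C}=\Cour{\sigma,\tau}_C$, and $\Cour{\sigma_C,\tau_C}=0$, I would verify that the Lie bracket of two linear sections of $L$ is again linear (this is the content of \cref{cor:linSecBrk}, which shows $q_{T\mbb{E}/\mbb{E}}$ of such a bracket descends to a genuine section of $\mbb{E}$), that the bracket of a linear section with a core section is again a core section (the analogue of \cref{eq:ECalmostideal} in \cref{prop:FreeVBDir}), and that the bracket of two core sections vanishes. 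Matching these three outcomes against the relation---linear$\oplus$linear$\to$linear, linear$\oplus$core$\to$core, core$\oplus$core$\to 0$, together with $[\tilde\sigma,0]=0$---shows that $\gr(+_{L/W})$-related sections have $\gr(+_{L/W})$-related brackets on the generating set.

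The main obstacle is the passage from generators to arbitrary sections, since the Lie algebroid bracket is only a Leibniz bracket: one must check that the anchor correction terms $\mbf{a}(\cdot)f$ produced by the Leibniz rule respect the relation. This reduces to the facts that $\mbf{a}(\sigma_T)=\mbf{a}(\sigma)_T$ is additive and $\mbf{a}(\sigma_C)=\mbf{a}(\sigma)_C$ is constant over the relevant additions, so that the anchor of $L^3$ is tangent to the base $\gr(+_{TM/M})$ and carries $\gr(+_{TM/M})$-related functions to $\gr(+_{TM/M})$-related functions. A conceptually cleaner route, which I would also mention, is to prove directly that $\gr(+_{T\mbb{E}/\mbb{E}})\colon T\mbb{E}\times T\mbb{E}\dasharrow T\mbb{E}$ is itself a Courant relation: it is Lagrangian---a short computation with the tangent-prolongation metric \cref{eq:TEPair}, where it is essential that $+_{T\mbb{E}/\mbb{E}}$ fixes the $\mbb{E}$-component (note that the naive addition $\gr(+_{\mbb{E}/M})$ is \emph{not} Lagrangian)---and it is involutive by the computation above; then $\gr(+_{L/W})=\gr(+_{T\mbb{E}/\mbb{E}})\cap L^3$ is exhibited as the $\mc{LA}$-relation induced on the Dirac Lie algebroids, the subalgebroid property following from cleanliness of the composition with the product Dirac structure $L\times L$.
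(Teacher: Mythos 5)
Your proposal is correct, and its closing ``conceptually cleaner route'' is in fact exactly the paper's proof: the paper simply observes (via \cref{prop:TngProCA}) that $\gr(+_{T\mbb{E}/\mbb{E}})\colon T\mbb{E}\times T\mbb{E}\dasharrow T\mbb{E}$ is a Courant morphism, so that $\gr(+_{L/W})=\gr(+_{T\mbb{E}/\mbb{E}})\cap L^3$ is involutive and hence a Lie subalgebroid of $L^3$, after which \cref{prop:AisLA} delivers the Lie algebroid structure on $W$ and the morphism property of $q_{L/W}$, just as you say. Your primary route is genuinely different in emphasis: rather than invoking the Courant-relation machinery, you verify the subalgebroid condition of \cref{def:DLACAVB} directly on the generators of $\Gamma(L,TM)$, matching the additive behaviour of linear versus core sections under $\gr(+_{L/W})$ (diagonal versus $\oplus\,0$) against the three bracket identities of \cref{eq:TEBrk} and \cref{cor:linSecBrk}. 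What this buys is transparency --- it makes visible exactly why the relations $\Cour{\sigma_T,\tau_T}=\Cour{\sigma,\tau}_T$, $\Cour{\sigma_T,\tau_C}=\Cour{\sigma,\tau}_C$, $\Cour{\sigma_C,\tau_C}=0$ are precisely what is needed --- at the cost of the bookkeeping you correctly flag (the Leibniz corrections $\mbf{a}(\cdot)f$ and the passage from generators to arbitrary sections restricting into the relation), which the paper's two-line argument avoids entirely by pushing all of that into the statement that addition is a Courant morphism. Both arguments are sound; your parenthetical observations (that $+_{T\mbb{E}/\mbb{E}}$ fixes the $\mbb{E}$-component and that the fibrewise addition $\gr(+_{\mbb{E}/M})$ is not Lagrangian) are correct and worth keeping.
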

\begin{proof}
By \cref{prop:TngProCA}, 
$$\gr(+_{T\mbb{E}/\mbb{E}}):T\mbb{E}\times T\mbb{E}\dasharrow T\mbb{E}$$ is a Courant morphism. 
It follows that $$\gr(+_{L/W})=\gr(+_{T\mbb{E}/\mbb{E}})\cap L^3:L\times L\to L$$ is an involutive subbundle of $L$, and hence a Lie algebroid relation. Hence $L^{flip}$ is an $\mc{LA}$-vector subbundle.
\end{proof}

\subsubsection{$\mc{VB}$-Dirac structures and pseudo-connections}

\begin{lemma}\label{lem:CoreOfLagDVB}
Suppose that $$\begin{tikzpicture}
\mmat{m1} at (-2,0){L&W\\ E& S\\};
\path[->] (m1-1-1)	edge (m1-1-2)
				edge (m1-2-1);
\path[<-] (m1-2-2)	edge (m1-1-2)
				edge (m1-2-1);
\mmat{m2} at (2,0) {T\mbb{E}&\mbb{E}\\ TM& M\\};
\path[->] (m2-1-1)	edge (m2-1-2)
				edge (m2-2-1);
\path[<-] (m2-2-2)	edge (m2-1-2)
				edge (m2-2-1);
\draw (0,0) node {$\subseteq$};
\end{tikzpicture}$$
is a Lagrangian double vector subbundle.
Then the core of $L$ is  $W^\perp\subseteq \mbb{E}$.
\end{lemma}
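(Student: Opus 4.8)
The plan is to identify the core $C_L$ of $L$ as the intersection $L\cap(\text{core of }T\mbb{E})$ and then pin it down using the Lagrangian hypothesis together with the pairing formulas \cref{eq:TEPair}. By \cref{ex:TngDVB2} the core of the tangent prolongation $T\mbb{E}\to TM$ is canonically $\mbb{E}$, the core element attached to $\upsilon\in\mbb{E}_x$ being the vertical tangent vector $\upsilon_C$ based at $0_x\in\mbb{E}_x$. Thus $C_L=\{\upsilon\in\mbb{E}\rvert_S\mid \upsilon_C\in L\}\subseteq\mbb{E}\rvert_S$, and the goal is to show this equals $W^\perp$.

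First I would establish the inclusion $C_L\subseteq W^\perp$. Given $w\in W_x$, pick $\tau\in\Gamma(\mbb{E})$ with $\tau(x)=w$; since the tangent lift is $\tau_T=\d\tau$, evaluation at the zero vector gives $\tau_T(0_x)=\d\tau(0_x)=0_w$, the zero tangent vector at $w$. As $0_w$ is the value at $w$ of the zero section $0_{T\mbb{E}/\mbb{E}}\rvert_W\colon W\to L$, it lies in $L$. Applying $\la\sigma_C,\tau_T\ra=\la\sigma,\tau\ra_C$ from \cref{eq:TEPair} to sections extending $\upsilon$ and $w$, and evaluating the resulting function on $TM$ at $0_x$, yields the pointwise identity $\la\upsilon_C,0_w\ra=\la\upsilon,w\ra$ in the fibre metric of $T\mbb{E}$ over $0_x$. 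Now if $\upsilon\in C_L$, then $\upsilon_C\in L$; since $L$ is isotropic and $0_w\in L$ lies over the same point $0_x\in TM$, we conclude $\la\upsilon,w\ra=\la\upsilon_C,0_w\ra=0$ for all $w\in W_x$, i.e.\ $\upsilon\in W^\perp$.

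Finally I would promote this inclusion to an equality by a rank count. The tangent prolongation $T\mbb{E}\to TM$ has rank $2\operatorname{rk}\mbb{E}$, so each Lagrangian fibre $L_v$ with $v\in E$ has dimension $\operatorname{rk}\mbb{E}$, whence $\operatorname{rk}(L\to E)=\operatorname{rk}\mbb{E}$. On the other hand, as a double vector bundle with vertical side $W$ and core $C_L$ the bundle $L\to E$ has rank $\operatorname{rk}W+\operatorname{rk}C_L$ (cf.\ \cref{ex:DirectSumDVB}). Since the metric on $\mbb{E}$ is nondegenerate, $\operatorname{rk}W^\perp=\operatorname{rk}\mbb{E}-\operatorname{rk}W=\operatorname{rk}C_L$, so the inclusion $C_L\subseteq W^\perp$ with equal ranks forces $C_L=W^\perp$.

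The step I expect to require the most care is the bookkeeping of the two vector bundle structures on $T\mbb{E}$: correctly reading $0_w$ both as an element of $L$ (via the side-$W$ zero section) and as the value $\tau_T(0_x)$ of a tangent lift, and remembering that the fibre metric of $T\mbb{E}$ lives fibrewise over $TM$, so that $\upsilon_C$ can only pair nontrivially with elements of $L$ lying over the common base point $0_x$. Once these identifications are in place, the computation with \cref{eq:TEPair} and the rank count are both routine.
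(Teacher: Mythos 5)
Your proof is correct, but it takes a genuinely different route from the paper's. The paper argues by duality: it dualizes the inclusion $L\subseteq T\mbb{E}$ over $TM$, uses the metric-induced identification $T\mbb{E}\cong (T\mbb{E})^{*_{\!y}}$ (cf.\ \cref{rem:TangLiftTngProDef}) and the Lagrangian condition $L^\perp=L$ to realize $L$ as the kernel of a surjection of double vector bundles $T\mbb{E}\to L^{*_{\!y}}$, and then reads off the core of $L$ as the kernel of the induced map on cores, $\mbb{E}\to W^*$, $\upsilon\mapsto\la\upsilon,\cdot\ra\rvert_W$ --- which is $W^\perp$ in one stroke. You instead compute directly: the identification $C_L=L\cap C_{T\mbb{E}}$, the pointwise evaluation of $\la\sigma_C,\tau_T\ra=\la\sigma,\tau\ra_C$ at $0_x$ together with isotropy of $L$ to get $C_L\subseteq W^\perp$, and a rank count via the core sequence \labelcref{eq:iA} to force equality. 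Your bookkeeping is sound: $0_w=\tau_T(0_x)$ does lie in $L$ as the value of the zero section of $L\to W$, and $\upsilon_C$ and $0_w$ do sit in the same fibre of $T\mbb{E}\to TM$ over $0_x\in E$, so the pairing is legitimate. The trade-off is that your argument only uses isotropy plus a dimension count and stays close to the explicit lift formulas, whereas the paper's dualization argument is shorter and additionally exhibits the projection $\mbb{E}\to W^*$ that reappears later as the restriction of $q_L$ to the core; note also that your rank step genuinely needs the full Lagrangian hypothesis (not just isotropy), exactly where the paper invokes $L^\perp=L$.
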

\begin{proof}

 For simplicity, we prove this for the case that $E=TM$. To prove the more general case, one merely replaces $T\mbb{E}$ by $T\mbb{E}\rvert_E$ in what follows.

Let $C_L$ denote the core of $L$. Dualizing the inclusion
 $$\begin{tikzpicture}
\mmat{m1} at (-2,0){L&W\\ TM& M\\};
\path[->] (m1-1-1)	edge (m1-1-2)
				edge (m1-2-1);
\path[<-] (m1-2-2)	edge (m1-1-2)
				edge (m1-2-1);
\mmat{m2} at (2,0) {T\mbb{E}&\mbb{E}\\ TM& M\\};
\path[->] (m2-1-1)	edge (m2-1-2)
				edge (m2-2-1);
\path[<-] (m2-2-2)	edge (m2-1-2)
				edge (m2-2-1);
\draw (0,0) node {$\subseteq$};
\draw (-2,0) node (c1) {$C_L$};
\draw (2,0) node (c2) {$\mbb{E}$};
\draw[left hook->] (c1) edge (m1-1-1);
\draw[left hook->] (c2) edge (m2-1-1);
\end{tikzpicture}$$
we get the projection
 $$\begin{tikzpicture}
\mmat{m1} at (2,0){L^{*_{\!y}}&C^*_L\\ TM& M\\};
\path[->] (m1-1-1)	edge (m1-1-2)
				edge (m1-2-1);
\path[<-] (m1-2-2)	edge (m1-1-2)
				edge (m1-2-1);
\mmat{m2} at (-2,0) {T\mbb{E}&\mbb{E}\\ TM& M\\};
\path[->] (m2-1-1)	edge (m2-1-2)
				edge (m2-2-1);
\path[<-] (m2-2-2)	edge (m2-1-2)
				edge (m2-2-1);
\draw[->] (m2) edge (m1);
\draw (2,0) node (c1) {$W^*$};
\draw (-2,0) node (c2) {$\mbb{E}$};
\draw[left hook->] (c1) edge (m1-1-1);
\draw[left hook->] (c2) edge (m2-1-1);
\end{tikzpicture}$$
 whose kernel is $L$. Restricting to the cores yields the map $\mbb{E}\to W^*$, whose kernel is $W^\perp$, the core of $L$.
\end{proof}

\begin{lemma}\label{lem:restPair}
Suppose that $$\begin{tikzpicture}
\mmat[4em]{m1} at (-2.5,0){L&W\\ E& S\\};
\path[->] (m1-1-1)	edge (m1-1-2)
				edge  (m1-2-1);
\path[<-] (m1-2-2)	edge (m1-1-2)
				edge (m1-2-1);
\mmat[4em]{m2} at (2.5,0) {T\mbb{E}&\mbb{E}\\ TM& M\\};
\path[->] (m2-1-1)	edge node {$q_{T\mbb{E}/\mbb{E}}$} (m2-1-2)
				edge (m2-2-1);
\path[<-] (m2-2-2)	edge (m2-1-2)
				edge (m2-2-1);
\draw (0,0) node {$\subseteq$} (m2);
\draw (-2.5,0) node (c) {$W^\perp$};
\path[left hook->] (c) edge (m1-1-1);
\draw (2.5,0) node (c2) {$\mbb{E}$};
\path[left hook->] (c2) edge (m2-1-1);
\end{tikzpicture}$$
is a double vector subbundle. Consider the double quotient \cref{eq:DVBtotQuo} described in \cref{prop:DVSBandConnect},
 \begin{equation}\label{eq:DVBtotLagQuo}\begin{tikzpicture}
\mmat[4em]{m1} at (-2.5,0){T\mbb{E}\rvert_{\scriptscriptstyle E\!\times_{\!S} \!W}&W\\ E& S\\};
\path[->] (m1-1-1)	edge  (m1-1-2)
				edge  (m1-2-1);
\path[<-] (m1-2-2)	edge (m1-1-2)
				edge (m1-2-1);
\mmat[4em]{m2} at (2.5,0) {W^*&S\\ S& S\\};
\path[->] (m2-1-1)	edge (m2-1-2)
				edge (m2-2-1);
\path[<-] (m2-2-2)	edge (m2-1-2)
				edge (m2-2-1);
\draw (-2.5,0) node (c) {$\mbb{E}\rvert_S$};
\path[left hook->] (c) edge (m1-1-1);
\draw (2.5,0) node (c2) {$W^*$};
\path[left hook->] (c2) edge (m2-1-1);
\path[->] (m1) edge node {$q_L$}(m2);
\end{tikzpicture}
\end{equation}
The following are equivalent:
\begin{enumerate}
\item $L\subseteq T\mbb{E}$ is Lagrangian.
\item For any $\sigma,\tau\in \Gamma(T\mbb{E}\rvert_{\scriptscriptstyle E\!\times_{\!S} \!W},E)$, we have
\begin{equation}\label{eq:restPair}\la \sigma,\tau\ra=\la q_L(\sigma),q_{T\mbb{E}/\mbb{E}}(\tau)\ra+\la q_{T\mbb{E}/\mbb{E}}(\sigma),q_L(\tau)\ra,\end{equation}
where the right hand side denotes the natural pairing between $W$ and $W^*$.
\item For any $$\sigma,\tau\in\widetilde{\Gamma(W)}:=\{\sigma\in\Gamma(\mbb{E})\text{ such that } \sigma\rvert_S\in\Gamma(W)\},$$
we have
\begin{equation}\label{eq:restPair2}\la \sigma,\tau\ra_T\rvert_E=\la q_L(\sigma_T),\tau\ra\rvert_E+\la \sigma,q_L(\tau_T)\ra\rvert_E,\end{equation}
where the right hand side denotes the natural pairing between $W$ and $W^*$.
\end{enumerate}
\end{lemma}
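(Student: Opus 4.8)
The plan is to work with the double vector bundle $V:=T\mbb{E}\rvert_{E\times_S W}$, whose core is the full bundle $\mbb{E}\rvert_S$, and to invoke \cref{prop:DVSBandConnect}: the inclusion $L\hookrightarrow V$ is encoded by the double-quotient map $q_L\colon V\to W^*$ with $\ker q_L=L$, and its associated pseudo-connection is $\nabla\sigma=q_L(\sigma_T)\rvert_E$ for $\sigma\in\widetilde{\Gamma(W)}$. Here I identify the core quotient $\mbb{E}\rvert_S/W^\perp$ with $W^*$ through the metric, exactly as in the proof of \cref{lem:CoreOfLagDVB}, so that $q_L$ of a core section $\upsilon_C$ is the image $[\upsilon]$ of $\upsilon$ under $\mbb{E}\to\mbb{E}/W^\perp\cong W^*$, that is $\la\upsilon,\cdot\ra\rvert_W$. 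Since $L\to E$ has rank $\on{rank}W+\on{rank}W^\perp=\on{rank}\mbb{E}=:n$, which is half the rank of the non-degenerate bundle $T\mbb{E}\rvert_E\to E$, an isotropic $L$ automatically satisfies $L=L^\perp$; hence statement (1) is equivalent to $L$ being merely isotropic, a reduction I will use repeatedly.

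For (2)$\Leftrightarrow$(3) I introduce the symmetric, $C^\infty(E)$-bilinear form
$$B(\sigma,\tau):=\la\sigma,\tau\ra-\la q_L(\sigma),q_{T\mbb{E}/\mbb{E}}(\tau)\ra-\la q_{T\mbb{E}/\mbb{E}}(\sigma),q_L(\tau)\ra,\qquad \sigma,\tau\in\Gamma(V,E),$$
so that (2) reads $B\equiv 0$. As $B$ is tensorial, it suffices to evaluate it on the spanning sections $\sigma_T\rvert_E$ (for $\sigma\in\widetilde{\Gamma(W)}$) and $\upsilon_C\rvert_E$ (for $\upsilon\in\Gamma(\mbb{E})$) described in \cref{ex:TngDVB2}. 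Using \cref{eq:TEPair} together with $q_{T\mbb{E}/\mbb{E}}(\upsilon_C)=0$ and $q_L(\upsilon_C)=[\upsilon]$, the core–core and tangent–core contributions vanish identically — for instance $B(\sigma_T,\upsilon_C)=\la\sigma,\upsilon\ra-\la\upsilon,\sigma\ra=0$ by symmetry of the metric — while $B(\sigma_T,\tau_T)\rvert_E=\la\sigma,\tau\ra_T\rvert_E-\la\nabla\sigma,\tau\ra-\la\sigma,\nabla\tau\ra$. Thus $B\equiv 0$ holds precisely when this last expression vanishes for all $\sigma,\tau$, which is exactly (3) (and is just the compatibility axiom \cref{eq:NabPairDer} read along $E$).

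It remains to tie these to (1). The implication (2)$\Rightarrow$(1) is immediate: restricting the identity in (2) to $\sigma,\tau\in\Gamma(L,E)$ annihilates both $q_L$-terms, leaving $\la\sigma,\tau\ra=0$, so $L$ is isotropic and hence Lagrangian. For (1)$\Rightarrow$(3) I span $\Gamma(L,E)$ by the core sections $\upsilon_C$ with $\upsilon\in\Gamma(W^\perp)$ together with linear lifts $h(\sigma):=\sigma_T\rvert_E-\eta_\sigma\in\Gamma(L,E)$, where $\eta_\sigma$ is a vertical linear section (annihilated by $q_{T\mbb{E}/\mbb{E}}$) chosen so that $q_L(\eta_\sigma)=\nabla\sigma$; such an $\eta_\sigma$ exists by the core sequence \cref{eq:ShrtLinSeq} and the surjectivity of $\mbb{E}\to W^*$. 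The pairings among the $\upsilon_C$ and of $h(\sigma)$ against $\upsilon_C$ vanish automatically, using $\la\sigma,\upsilon\ra=0$ for $\sigma\in W,\ \upsilon\in W^\perp$, whereas $\la\eta_\sigma,\eta_\tau\ra=0$ and the pairing identity $\la\eta_\tau,\sigma_T\ra=\la q_L(\eta_\tau),\sigma\ra$ (extracted from the proof of \cref{cor:linSecBrk}) give $\la h(\sigma),h(\tau)\ra=\la\sigma,\tau\ra_T\rvert_E-\la\nabla\sigma,\tau\ra-\la\sigma,\nabla\tau\ra=B(\sigma_T,\tau_T)\rvert_E$. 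The isotropy of $L$ forces this to vanish, which is (3); combined with (2)$\Leftrightarrow$(3) and (2)$\Rightarrow$(1), all three statements are equivalent.

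The main obstacle I anticipate is purely bookkeeping: keeping straight which pairing is in play (the $\mbb{E}$-metric versus the induced $W$–$W^*$ duality), and verifying the two auxiliary identities $q_L(\upsilon_C)=\la\upsilon,\cdot\ra\rvert_W$ and $\la\eta,\sigma_T\ra=\la q_L(\eta),\sigma\ra$ for vertical linear sections $\eta$. Both hinge on the metric identification $\mbb{E}\rvert_S/W^\perp\cong W^*$ and on the precise form of the double quotient $q_L$ from \cref{prop:DVSBandConnect}. Once these are pinned down, every computation collapses to \cref{eq:TEPair} and the symmetry and non-degeneracy of the metric.
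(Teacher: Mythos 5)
Your proof is correct, and while it draws on the same basic ingredients as the paper's argument --- the rank count reducing ``Lagrangian'' to ``isotropic'', the identity $\nabla\sigma=q_L(\sigma_T)$ from \cref{eq:NabFormFromK}, the pairing rules \cref{eq:TEPair}, and the duality between the core sequence of $T\mbb{E}$ and that of $(T\mbb{E})^{*_{\!y}}$ --- it organizes the cycle of implications differently. The paper proves $1\Rightarrow2$ directly, decomposing an arbitrary pair of sections of $T\mbb{E}\rvert_{\scriptscriptstyle E\!\times_{\!S}\!W}$ as $\sigma=\sigma_L+i\circ\sigma'$ with $\sigma_L\in\Gamma(L,E)$ and expanding the metric term by term; it then specializes to get $2\Rightarrow3$ and closes the loop with $3\Rightarrow1$ by checking isotropy on linear sections. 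You instead observe that the defect $B$ is symmetric and $C^\infty(E)$-bilinear, so its vanishing can be tested on the spanning sections $\sigma_T\rvert_E$ and $\upsilon_C\rvert_E$; this gives $2\Leftrightarrow3$ in one stroke (the off-diagonal evaluations $B(\sigma_T,\upsilon_C)$ and $B(\upsilon_C,\upsilon'_C)$ collapsing by \cref{eq:TEPair}, $q_{T\mbb{E}/\mbb{E}}(\upsilon_C)=0$, and the symmetry of the metric), makes $2\Rightarrow1$ a triviality since $L=\ker(q_L)$, and leaves only $1\Rightarrow3$, which you handle by producing the explicit linear sections $h(\sigma)=\sigma_T\rvert_E-\eta_\sigma$ of $L$ and computing their pairing. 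The tensoriality observation is the genuine structural difference: it replaces the paper's hardest step (establishing \cref{eq:restPair} for \emph{arbitrary} sections from Lagrangianity) with a pointwise linear-algebra reduction, so that only the tangent-lift/tangent-lift pairing carries any information. The two auxiliary identities you flag --- $q_L(\upsilon_C)=\la\upsilon,\cdot\ra\rvert_W$ and $\la\eta,\sigma_T\ra=\la q_L(\eta),\sigma\ra$ for vertical linear $\eta$ --- are exactly the core-sequence duality the paper also relies on, and both hold for the reasons you give, so there is no gap.
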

\begin{proof}$\quad$
\begin{description}

\item[$1\Rightarrow 2$]

Suppose $\sigma,\tau\in \Gamma(T\mbb{E}\rvert_{\scriptscriptstyle E\!\times_{\!S} \!W},E)$  and choose $\sigma_L,\tau_L\in \Gamma(L,E)$ so that
$$q_{T\mbb{E}/\mbb{E}}\circ\sigma_L=q_{T\mbb{E}/\mbb{E}}\circ\sigma, \quad q_{T\mbb{E}/\mbb{E}}\circ\tau_L=q_{T\mbb{E}/\mbb{E}}\circ\tau.$$
Exactness of the core sequence \labelcref{eq:iA} for $T\mbb{E}$, 
\begin{equation}\label{eq:TngProCoreSeq}\begin{tikzpicture}
\mmat{m}{q_{TM/M}^*\mbb{E}  & T\mbb{E}& q_{TM/M}^* \mbb{E}\\};
\path[->] (m-1-1) 	edge node {$i$} (m-1-2);
\path[->] (m-1-2)	edge node {$q_{T\mbb{E}/\mbb{E}}$} (m-1-3);
\end{tikzpicture}\end{equation} implies that there exists sections $\sigma',\tau'\in\Gamma(q_{E/S}^*\mbb{E},E)$ such that
$$\sigma-\sigma_L=i\circ \sigma',\quad\tau-\tau_L=i\circ\tau'.$$

As explained in \cref{rem:TangLiftTngProDef} the metric on $T\mbb{E}$ defines an isomorphism of double vector bundles 
$$T\mbb{E}\cong (T\mbb{E})^{*_{\!y}}.$$
Since the core sequence for $(T\mbb{E})^{*_{\!y}}$ is dual to \labelcref{eq:TngProCoreSeq}, we have
$$\la i\circ\sigma',\tau_L\ra=\la \sigma',q_{T\mbb{E}/\mbb{E}}\tau_L\ra=\la\tilde q\circ\sigma',q_{T\mbb{E}/\mbb{E}}\tau\ra,$$
where $\tilde q:\mbb{E}\rvert_S\to \mbb{E}\rvert_S/W^\perp\cong W^*$ is the natural projection, and the right hand side denotes the natural pairing between $W$ and $W^*$. However (as required by it's definition) the restriction of $q_L$ to the core is $\tilde q$, i.e. $\tilde q=q_L\circ i$. Thus 
$$\la i\circ\sigma',\tau_L\ra=\la q_L\circ i\circ\sigma',q_{T\mbb{E}/\mbb{E}}\tau\ra=\la q_L\circ (\sigma-\sigma_L),q_{T\mbb{E}/\mbb{E}}\tau\ra=\la q_L\circ \sigma,q_{T\mbb{E}/\mbb{E}}\tau\ra,$$
and similarly with the roles of $\sigma$ and $\tau$ replaced. Therefore
 \begin{align*}
\la\sigma,\tau\ra&=\la\sigma_L+(\sigma-\sigma_L),\tau_L+(\tau-\tau_L)\ra\\
&=\la\sigma_L,\tau_L\ra+\la\sigma_L,i\circ\tau'\ra+\la i\circ\sigma',\tau_L\ra\\
&=\la\sigma_L,\tau_L\ra+\la q_{T\mbb{E}/\mbb{E}} (\sigma),q_L(\tau)\ra+\la q_L (\sigma),q_{T\mbb{E}/\mbb{E}}(\tau)\ra.
 \end{align*}
Thus \cref{eq:restPair} holds whenever $L$ is Lagrangian. 

\item[$2\Rightarrow 3$]Combining \cref{eq:restPair,eq:TEPair} yields \cref{eq:restPair2}.

\item[$3\Rightarrow 1$]
Note that the rank of $L\to E$ is the sum of the ranks of its core and side bundles, i.e. 
$$\on{rank}(L\to E)=\on{rank}(W\to S)+\on{rank}(W^\perp\to S)
=\frac{1}{2}\on{rank}(T\mbb{E}\to TM).$$
So $L$ is Lagrangian if and only if it is isotropic. 


Suppose $\sigma_L,\tau_L\in\Gamma_l(L,E)$,  and let 
$\sigma,\tau\in\widetilde{\Gamma(W)}$ be chosen so that 
$$\sigma\rvert_S=q_{T\mbb{E}/\mbb{E}}(\sigma_L),\quad \tau\rvert_S=q_{T\mbb{E}/\mbb{E}}(\tau_L).$$
Exactness of the core sequence \labelcref{eq:TngProCoreSeq} implies that there exists sections $\sigma',\tau'\in\Gamma_l(q_{E/S}^*\mbb{E},E)$ such that
$$\sigma_L-\sigma_T=i\circ \sigma',\quad\tau_L-\tau_T=i\circ\tau'.$$
Thus, as above,
$$\la i\circ\sigma',\tau_T\ra\rvert_E=\la\sigma',q_{T\mbb{E}/\mbb{E}}\circ\tau_T\ra\rvert_E=\la q_L(\sigma_L-\sigma_T),\tau\ra\rvert_E=-\la q_L\circ\sigma_T,\tau\ra\rvert_E,$$
and similarly with the roles of $\sigma$ and $\tau$ replaced.
Therefore
 \begin{align*}
\la\sigma_L,\tau_L\ra\rvert_E&=\la\sigma_T+(\sigma_L-\sigma_T),\tilde\tau_T+(\tau_L-\tau_T)\ra\rvert_E\\
&=\la\tilde\sigma_T,\tilde\tau_T\ra\rvert_E+\la\sigma_T,i\circ\tau'\ra\rvert_E+\la i\circ\sigma',\tau_T\ra\rvert_E\\
&=\la\tilde\sigma,\tilde\tau\ra_T\rvert_E-\la\sigma,q_L(\tau_T)\ra\rvert_E-\la q_L(\sigma_T),\tau\ra\rvert_E\\
&=0
 \end{align*}
 which shows that $L$ is isotropic.
\end{description}
\end{proof}

\begin{proposition}\label{prop:clasLagTDSB}
Suppose that $\mbb{E}$ is a Courant algebroid, $S\subseteq M$ is a submanifold, $W\to S$ is a vector subbundle of $\mbb{E}\rvert_S\to S$, and $E\to S$ is a vector subbundle of $TM\rvert_S\to S$.
Let $$\widetilde{\Gamma(W)}:=\{\sigma\in\Gamma(\mbb{E})\text{ such that } \sigma\rvert_S\in\Gamma(W)\}.$$
The correspondence described in \cref{prop:DVSBandConnect} restricts to a one-to-one correspondence between  Lagrangian double vector subbundles 
\begin{equation}\label{eq:LagDVBforConn}\begin{tikzpicture}
\mmat[4em]{m1} at (-2.5,0){L&W\\ E& S\\};
\path[->] (m1-1-1)	edge (m1-1-2)
				edge  (m1-2-1);
\path[<-] (m1-2-2)	edge (m1-1-2)
				edge (m1-2-1);
\mmat[4em]{m2} at (2.5,0) {T\mbb{E}&\mbb{E}\\ TM& M\\};
\path[->] (m2-1-1)	edge node {$q_{T\mbb{E}/\mbb{E}}$} (m2-1-2)
				edge (m2-2-1);
\path[<-] (m2-2-2)	edge (m2-1-2)
				edge (m2-2-1);
\draw (0,0) node {$\subseteq$} (m2);
\draw (-2.5,0) node (c) {$W^\perp$};
\path[left hook->] (c) edge (m1-1-1);
\draw (2.5,0) node (c2) {$\mbb{E}$};
\path[left hook->] (c2) edge (m2-1-1);
\end{tikzpicture}
\end{equation}
and linear maps \begin{equation}\label{eq:NabForL}\nabla:\widetilde{\Gamma(W)}\to \Gamma(E^*\otimes W^*)\end{equation} satisfying the Leibniz rule 
\begin{equation}\label{eq:NabForLLeib}\nabla f\sigma=f\nabla\sigma+\d\!f\otimes \la\sigma,\cdot\ra,\end{equation}
 for any $f\in C^\infty(M)$, $\sigma\in\widetilde{\Gamma(W)}$, and the metric compatibility condition
 \begin{equation}\label{eq:NabForLMetComp}\d\la\sigma,\tau\ra\rvert_E=\la\nabla\sigma,\tau\ra+\la\sigma,\nabla\tau\ra,\end{equation}
 for any $\sigma,\tau\in\widetilde{\Gamma(W)}$.
\end{proposition}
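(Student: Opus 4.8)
The plan is to obtain this proposition by intersecting the general correspondence of \cref{prop:DVSBandConnect} with the Lagrangian criterion supplied by \cref{lem:restPair}. First I would specialize \cref{prop:DVSBandConnect} to the vector bundle $B=\mbb{E}$, taking the core to be $C=W^\perp$ and using the fibre metric to identify $\mbb{E}\rvert_S/W^\perp\cong W^*$ (the very identification used to form the double quotient in \cref{lem:restPair}). With these choices, \cref{prop:DVSBandConnect} already furnishes a bijection between double vector subbundles of the shape \labelcref{eq:LagDVBforConn}---having core $W^\perp$ but \emph{not} yet assumed Lagrangian---and linear maps $\nabla\colon\widetilde{\Gamma(W)}\to\Gamma(E^*\otimes W^*)$ satisfying the Leibniz rule \labelcref{eq:NabForLLeib}. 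Under this bijection the map attached to $L$ is $\nabla\sigma=q_L(\sigma_T)\rvert_E$, by formula \labelcref{eq:NabFormFromK}, where $q_L$ is the double quotient map of \labelcref{eq:DVBtotLagQuo}.

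It then remains only to show that, within this class, $L$ is Lagrangian precisely when $\nabla$ additionally satisfies the metric compatibility condition \labelcref{eq:NabForLMetComp}. For this I would invoke the equivalence (1)$\Leftrightarrow$(3) of \cref{lem:restPair}. Condition (3), equation \labelcref{eq:restPair2}, reads
\[\la\sigma,\tau\ra_T\rvert_E=\la q_L(\sigma_T),\tau\ra\rvert_E+\la\sigma,q_L(\tau_T)\ra\rvert_E,\qquad \sigma,\tau\in\widetilde{\Gamma(W)}.\]
Substituting $q_L(\sigma_T)\rvert_E=\nabla\sigma$ and $q_L(\tau_T)\rvert_E=\nabla\tau$ rewrites the right-hand side as $\la\nabla\sigma,\tau\ra+\la\sigma,\nabla\tau\ra$. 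On the left, the tangent lift of the function $\la\sigma,\tau\ra\in C^\infty(M)$ is $\la\sigma,\tau\ra_T=\d\la\sigma,\tau\ra$ by the convention $f_T=\d\!f$ recorded in \cref{ex:TngDVB2}, so the left-hand side is $\d\la\sigma,\tau\ra\rvert_E$. Thus condition (3) is \emph{literally} the metric compatibility condition \labelcref{eq:NabForLMetComp}, and since the bijection of \cref{prop:DVSBandConnect} and the equivalence of \cref{lem:restPair} are both genuine `if and only if' statements, the former restricts to the asserted correspondence in both directions at once.

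The argument is essentially an assembly of results already in hand, so I do not anticipate a serious obstacle; the points demanding care are bookkeeping ones. I would need to confirm that the two maps denoted $q_L$---the one produced by \cref{prop:DVSBandConnect} with core $W^\perp$, and the double quotient map of \cref{lem:restPair}---agree under the identification $\mbb{E}\rvert_S/W^\perp\cong W^*$, which is immediate since both are the canonical projection on cores induced by the same double quotient \labelcref{eq:DVBtotLagQuo}. I would also remark, citing \cref{lem:CoreOfLagDVB}, that fixing the core to be $W^\perp$ omits no Lagrangian subbundles: every Lagrangian $L$ with side bundle $W$ necessarily has core $W^\perp$, so the restricted correspondence genuinely enumerates all Lagrangian double vector subbundles with the given side bundles.
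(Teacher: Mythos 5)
Your proposal is correct and follows essentially the same route as the paper: the author likewise specializes \cref{prop:DVSBandConnect} with core $W^\perp$, identifies $\nabla\sigma=q_L\circ\sigma_T$ via \labelcref{eq:NabFormFromK}, and observes that condition \labelcref{eq:restPair2} of \cref{lem:restPair} then coincides with the metric compatibility condition \labelcref{eq:NabForLMetComp}. Your additional remark invoking \cref{lem:CoreOfLagDVB} to justify that no Lagrangian subbundles are omitted by fixing the core is a sensible piece of bookkeeping that the paper leaves implicit.
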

\begin{proof}
Since \cref{eq:NabFormFromK} states that $\nabla\sigma=q_L\circ \sigma_T$,
\cref{eq:restPair2} is the same equation as \cref{eq:NabForLMetComp}. 
Therefore \cref{lem:restPair} shows that under the correspondence described in \cref{prop:DVSBandConnect}, Lagrangian double vector subbundles of the form \labelcref{eq:LagDVBforConn} correspond to linear maps \labelcref{eq:NabForL} satisfying \cref{eq:NabForLMetComp}.

\end{proof}

\Cref{prop:VBDirIsLaVB} shows that if $L\subseteq T\mbb{E}$ is a $\mc{VB}$-Dirac structure, then the side bundle $W$ carries a Lie algebroid structure. The following proposition describes the Lie algebroid bracket on $W$ in terms of the Courant bracket on $\mbb{E}$ and the pseudo-connection 
$$\nabla:\Gamma(W)\to \Gamma(T^*M\otimes W^*)$$
 described in \cref{prop:clasLagTDSB}.

\begin{proposition}\label{prop:qIndBrk}
Suppose that 
$$\begin{tikzpicture}
\mmat{m1} at (-2,0){L&W\\ TM& M\\};
\path[->] (m1-1-1)	edge (m1-1-2)
				edge (m1-2-1);
\path[<-] (m1-2-2)	edge (m1-1-2)
				edge (m1-2-1);
\mmat{m2} at (2,0) {T\mbb{E}&\mbb{E}\\ TM& M\\};
\path[->] (m2-1-1)	edge node {$q_{\mbb{E}/V}$} (m2-1-2)
				edge (m2-2-1);
\path[<-] (m2-2-2)	edge (m2-1-2)
				edge (m2-2-1);
\draw (0,0) node {$\subseteq$};
\end{tikzpicture}$$
 is a $\mc{VB}$-Dirac structure, and let $$\nabla:\Gamma(W)\to \Gamma(T^*M\otimes W^*)$$ be the pseudo-connection \labelcref{eq:NabForL} described in \cref{prop:clasLagTDSB}.
 Suppose that $\sigma,\tau\in\Gamma(W)$ are two sections.
  Then \begin{equation}\label{eq:VBDirIndBrk}[\sigma,\tau]=\Cour{\sigma,\tau}-\mbf{a}^*\la \nabla\sigma,\tau\ra,\end{equation} where the left hand side denotes the Lie algebroid bracket on $W$ described in \cref{prop:VBDirIsLaVB}.
\end{proposition}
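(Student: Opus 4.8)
The plan is to reduce the statement to the description of the induced Lie algebroid bracket from the Remark following \cref{prop:AisLA}, and then evaluate that description using \cref{cor:linSecBrk}. By \cref{prop:VBDirIsLaVB}, $L^{flip}$ is an $\mc{LA}$-vector bundle whose top Lie algebroid $L\to TM$ is precisely the Dirac structure $L\subseteq T\mbb{E}$; in particular its bracket is the restriction of the Courant bracket of $T\mbb{E}$, and the projection $q_{L/W}\colon L\to W$ is the restriction of $q_{T\mbb{E}/\mbb{E}}$. Hence, applying the Remark after \cref{prop:AisLA} with $A=W$, $B=TM$, $D=L$: if $\tilde\sigma,\tilde\tau\in\Gamma_l(L,TM)$ are linear sections lifting $\sigma,\tau\in\Gamma(W)$ (such lifts exist by surjectivity in the exact sequence \cref{eq:ShrtLinSeq} applied to $L$), then
\[[\sigma,\tau]=q_{L/W}\big(\Cour{\tilde\sigma,\tilde\tau}\big)=q_{T\mbb{E}/\mbb{E}}\big(\Cour{\tilde\sigma,\tilde\tau}\big).\]

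Next I would apply \cref{cor:linSecBrk}. Since $L\subseteq T\mbb{E}$, the section $\tilde\sigma$ is also a linear section of $T\mbb{E}\to TM$, so I may write $\tilde\sigma=i\sigma'+\sigma_T$ and $\tilde\tau=i\tau'+\tau_T$ for unique $\sigma',\tau'\in\Gamma(T^*M\otimes\mbb{E})$, where $\sigma=q_{T\mbb{E}/\mbb{E}}(\tilde\sigma)$ and $\tau=q_{T\mbb{E}/\mbb{E}}(\tilde\tau)$. Then \cref{eq:LinBrk} gives
\[[\sigma,\tau]=\Cour{\sigma,\tau}+\mbf{a}^*\la\sigma',\tau\ra,\]
so it only remains to identify $\la\sigma',\tau\ra$ with $-\la\nabla\sigma,\tau\ra$.

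For this last identification I would use that $\nabla$ is recovered from $L$ by $\nabla\sigma=q_L\circ\sigma_T$ (equation \eqref{eq:NabFormFromK} in the proof of \cref{prop:DVSBandConnect}, specialized to $E=TM$), where $q_L\colon T\mbb{E}\rvert_{TM\times_M W}\to W^*$ is the quotient map with kernel $L$. Because $\tilde\sigma=i\sigma'+\sigma_T$ is a section of $L=\ker q_L$, applying $q_L$ yields $q_L(\sigma_T)=-q_L(i\sigma')$, i.e. $\nabla\sigma=-q_L\circ i\circ\sigma'$. As established in the proof of \cref{lem:restPair}, the restriction of $q_L$ to the core is the natural projection $\tilde q\colon\mbb{E}\to\mbb{E}/W^\perp\cong W^*$, so that $\la q_L(i\sigma'),\tau\ra=\la\tilde q\circ\sigma',\tau\ra=\la\sigma',\tau\ra$ for $\tau\in\Gamma(W)$ (the last equality because the identification $\mbb{E}/W^\perp\cong W^*$ is via the metric and $\tau$ pairs trivially with $W^\perp$). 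Hence $\la\nabla\sigma,\tau\ra=-\la\sigma',\tau\ra$, and substituting into the previous display gives $[\sigma,\tau]=\Cour{\sigma,\tau}-\mbf{a}^*\la\nabla\sigma,\tau\ra$, as claimed.

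The main obstacle is bookkeeping the various projections and duals consistently—in particular verifying $q_{L/W}=q_{T\mbb{E}/\mbb{E}}\rvert_L$ and $q_L\circ i=\tilde q$, and checking that pairing $\tilde q\circ\sigma'$ against $\tau\in\Gamma(W)$ recovers the $\mbb{E}$-pairing $\la\sigma',\tau\ra$; once these identifications are in place, the computation is a direct substitution.
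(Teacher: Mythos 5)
Your proposal is correct and follows essentially the same route as the paper's proof: choose linear lifts $\tilde\sigma,\tilde\tau\in\Gamma_l(L,TM)$, compute $[\sigma,\tau]=q_{T\mbb{E}/\mbb{E}}(\Cour{\tilde\sigma,\tilde\tau})$ via \cref{cor:linSecBrk}, and identify $\la\sigma',\tau\ra=-\la\nabla\sigma,\tau\ra$ using $\nabla\sigma=q_L(\sigma_T)$ together with the fact that $q_L$ restricts to the canonical projection $\mbb{E}\to\mbb{E}/W^\perp\cong W^*$ on the core. The bookkeeping you flag at the end is exactly what the paper's argument also relies on, and your verification of it is sound.
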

\begin{proof}Choose $\tilde\sigma,\tilde\tau\in\Gamma_l(L,TM)$ to be $q_{L/W}$-related to $\sigma$ and $\tau$, respectively.
That is,
$$q_{T\mbb{E}/\mbb{E}}(\tilde\sigma)=\sigma,\quad q_{T\mbb{E}/\mbb{E}}(\tilde\tau)=\tau.$$
By definition of the Lie algebroid structure on $W$, we have 
\begin{subequations}\label[pluralequation]{eq:IndBrk}
\begin{equation}\label{eq:IndBrk1}[\sigma,\tau]=q_{T\mbb{E}/\mbb{E}}\big(\Cour{\tilde\sigma,\tilde\tau}\big).\end{equation}
But \cref{cor:linSecBrk} implies that 
\begin{equation}\label{eq:IndBrk2}q_{T\mbb{E}/\mbb{E}}\big( \Cour{\tilde\sigma,\tilde\tau}\big)=\Cour{\sigma,\tau}+\mbf{a}^*\la \sigma',\tau\ra.\end{equation}
Now, by definition $\nabla\sigma=q_L(\sigma_T),$ where 
$$q_L:T\mbb{E}\rvert_W\to T\mbb{E}\rvert_W/L\cong W^*$$ 
is the double quotient map \labelcref{eq:DVBtotLagQuo}.
Since $\tilde\sigma\in\Gamma(L,TM)$, it follows that 
$$\nabla\sigma=q_L(\sigma_T)=q_L(\sigma_T-\tilde\sigma)=-q_L\sigma'.$$
Since the restriction of $q_L$ to the core, $\mbb{E}$ is just the canonical projection $\mbb{E}\to\mbb{E}/W^\perp$, and $\tau\in\Gamma(W)$, we get 
\begin{equation}\label{eq:IndBrk3}\la \sigma',\tau\ra=-\la\nabla\sigma,\tau\ra.\end{equation}
\end{subequations}
Combining \cref{eq:IndBrk1,eq:IndBrk2,eq:IndBrk3} yields \cref{eq:VBDirIndBrk}.

\end{proof}

\subsection{Pseudo-Dirac structures and the tangent prolongation}


In this section, we show that there is a one-to-one correspondence between $\mc{VB}$-Dirac structures in $T\mbb{E}$ and pseudo-Dirac structures in $\mbb{E}$. In particular, we will prove the main result of this paper, that
\begin{theorem}\label{thm:LieSubIsVBDir}
Suppose $\mbb{E}\to M$ is a Courant algebroid.
\begin{itemize}
\item If $(W,\nabla)$ is a pseudo-Dirac structure for $\mbb{E}$, then 
$$[\sigma,\tau]:=\Cour{\sigma,\tau}-\mbf{a}^*\la \nabla\sigma,\tau\ra,\quad\sigma,\tau\in\Gamma(W),$$
 defines a Lie algebroid bracket on $W$.
\item The correspondence described in \cref{prop:clasLagTDSB}
%
 between Lagrangian double vector subbundles
\begin{equation}\label{eq:VBDirTngThm}
\begin{tikzpicture}
\mmat{m1} at (-2,0) {L&W\\ TM& M\\};
\path[->] (m1-1-1)	edge node {$q_{L/W}$} (m1-1-2)
				edge (m1-2-1);
\path[<-] (m1-2-2)	edge (m1-1-2)
				edge (m1-2-1);
\mmat{m2} at (2,0) {T\mbb{E}&\mbb{E}\\ TM& M\\};
\path[->] (m2-1-1)	edge (m2-1-2)
				edge (m2-2-1);
\path[<-] (m2-2-2)	edge (m2-1-2)
				edge (m2-2-1);
				
\draw (0,0) node {$\subseteq$};
\end{tikzpicture}
\end{equation} 
and subbundles $W\subseteq \mbb{E}$ carrying a pseudo-connection $$\nabla:\Omega^0(W)\to \Omega^1(W^*)$$ restricts to a one-to-one correspondence between $\mc{VB}$-Dirac structures of the form \labelcref{eq:VBDirTngThm} and 
pseudo-Dirac structures $(W,\nabla)$ in $\mbb{E}$. 
Under this correspondence, the map $q_{L/W}:L\to W$ is a Lie algebroid morphism. 
\end{itemize}

\end{theorem}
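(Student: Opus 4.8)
The plan is to deduce both bullets from the dictionary between Lagrangian double vector subbundles of $T\mbb{E}$ and pseudo-connections on $W$ already established in \cref{prop:clasLagTDSB}, by characterizing exactly when the associated Lagrangian subbundle is involutive. So fix a subbundle $W\subseteq\mbb{E}$ with pseudo-connection $\nabla$, and let $L\subseteq T\mbb{E}$ be the corresponding Lagrangian double vector subbundle, whose side bundle is $W$ and whose core is $W^\perp$ (\cref{lem:CoreOfLagDVB,prop:clasLagTDSB}). Once I show that $L$ is a Dirac structure if and only if $(W,\nabla)$ is a pseudo-Dirac structure, the claimed one-to-one correspondence (the second bullet) follows; the first bullet and the statement that $q_{L/W}$ is a Lie algebroid morphism are then immediate from \cref{prop:VBDirIsLaVB} (which endows $W$ with a Lie algebroid structure and makes $q_{L/W}$ a morphism) combined with \cref{prop:qIndBrk} (which identifies that bracket with the modified bracket \labelcref{eq:pcModBrk}).

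Since $L$ is already Lagrangian, it is a Dirac structure exactly when it is involutive, and for a Lagrangian subbundle involutivity is equivalent to the vanishing of the totally skew, $C^\infty(TM)$-trilinear Courant tensor
$$\Upsilon_L(\tilde\sigma,\tilde\tau,\tilde\upsilon):=\la\Cour{\tilde\sigma,\tilde\tau},\tilde\upsilon\ra,\qquad \tilde\sigma,\tilde\tau,\tilde\upsilon\in\Gamma(L),$$
the tensoriality and skew-symmetry being the standard consequence of axioms c2) and c3) on an isotropic subbundle. Because $\Upsilon_L$ is a tensor it suffices to evaluate it on a spanning set of $\Gamma(L)$, namely the core sections $\upsilon_C$ with $\upsilon\in\Gamma(W^\perp)$ together with the linear sections $\Gamma_l(L,TM)$, which project onto $\Gamma(W)$; I will organize the verification by the number of core arguments. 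Whenever two arguments are core sections, \cref{eq:TEBrk} gives $\Cour{\upsilon^1_C,\upsilon^2_C}=0$, so by skew-symmetry $\Upsilon_L$ vanishes on all such triples. For two linear arguments $\tilde\sigma,\tilde\tau$ and one core argument $\upsilon_C$, the pairing formulas \labelcref{eq:TEPair} reduce $\Upsilon_L(\tilde\sigma,\tilde\tau,\upsilon_C)$ to $\la q_{T\mbb{E}/\mbb{E}}\Cour{\tilde\sigma,\tilde\tau},\upsilon\ra_C$, and \cref{cor:linSecBrk} together with the identity $\nabla\sigma=-q_L\sigma'$ from the proof of \cref{prop:qIndBrk} identifies this projection with the modified bracket $[\sigma,\tau]$; hence this family of components vanishes precisely when $[\sigma,\tau]\in\Gamma(W)$ for all $\sigma,\tau\in\Gamma(W)$, which is exactly the first axiom of \cref{def:LieSubalg}.

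The main work is the all-linear component. Assuming the first axiom holds, changing a linear lift of $\sigma\in\Gamma(W)$ by a core section of $L$ alters $\Upsilon_L(\tilde\sigma,\tilde\tau,\tilde\upsilon)$ only by the (now vanishing) two-linear--one-core terms, so $\Upsilon_L$ restricted to linear sections descends to a well-defined tensor in $\sigma,\tau,\upsilon\in\Gamma(W)$, valued in $\Omega^1(M,\wedge^3W^*)$. I will compute it by writing each linear section as $\tilde\sigma=\sigma_T+i\sigma'$ with $q_L\sigma'=-\nabla\sigma$, and expanding $\la\Cour{\tilde\sigma,\tilde\tau},\tilde\upsilon\ra$ using bilinearity of the bracket, the lift rules \labelcref{eq:TEPairBrk}, the metric compatibility \labelcref{eq:NabPairDer}, and the definition \labelcref{eq:torsTens} of the torsion $T$. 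The expected outcome is that this expression equals $\Psi(\sigma,\tau,\upsilon)$ of \labelcref{eq:Psi_L}, read as a linear function on $TM$, so that the all-linear component of $\Upsilon_L$ vanishes exactly when $\Psi\equiv0$, i.e. the second axiom of \cref{def:LieSubalg}. This bookkeeping --- tracking how the core correction terms $i\sigma'$ recombine into the $\d$-exact and torsion pieces of \labelcref{eq:Psi_L} --- is the step I expect to be the main obstacle, and the one demanding genuine care rather than formal manipulation.

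Combining the three cases, $L$ is involutive if and only if both axioms of \cref{def:LieSubalg} hold, which establishes the correspondence between $\mc{VB}$-Dirac structures of the form \labelcref{eq:VBDirTngThm} and pseudo-Dirac structures $(W,\nabla)$. The remaining assertions --- that the modified bracket is a Lie algebroid bracket on $W$ and that $q_{L/W}$ is a Lie algebroid morphism --- then follow at once from \cref{prop:VBDirIsLaVB,prop:qIndBrk}.
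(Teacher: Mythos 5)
Your proposal is correct and follows essentially the same route as the paper: reduce to the question of when the Lagrangian subbundle $L$ from \cref{prop:clasLagTDSB} is involutive, test the trilinear Courant tensor on core and linear sections graded by the number of core arguments (the one-core case reproducing \cref{lem:ModBrkProp2} and the all-linear case reproducing the computation of $\Psi$ in \cref{lem:PsiProp}), and then invoke \cref{prop:VBDirIsLaVB,prop:qIndBrk} for the Lie algebroid structure and the formula for the bracket. The one computation you defer --- expanding $\la\Cour{\sigma_T+i\sigma',\tau_T+i\tau'},\upsilon_T+i\upsilon'\ra$ into the form \labelcref{eq:Psi_L} --- is exactly the content of the paper's \cref{lem:PsiProp}, so there is no gap in the strategy.
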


Before proving this theorem, we will first prove some preliminary results.
%
%
%

\begin{lemma}\label{lem:ModBrkProp1}
Suppose that $\mbb{E}\to M$ is a Courant algebroid and $\nabla$ is a pseudo-connection  for the subbundle $W\subseteq \mbb{E}$. Then the modified bracket \labelcref{eq:pcModBrk} is skew symmetric.
\end{lemma}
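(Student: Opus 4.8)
The plan is to compute the symmetric part $[\sigma,\tau]+[\tau,\sigma]$ directly and show that it vanishes, relying only on Courant axiom c3) together with the metric-compatibility axiom \cref{eq:NabPairDer} of the pseudo-connection. Since $[\sigma,\tau]=\Cour{\sigma,\tau}-\mbf{a}^*\la\nabla\sigma,\tau\ra$, adding the two orderings gives
\begin{equation*}
[\sigma,\tau]+[\tau,\sigma]=\big(\Cour{\sigma,\tau}+\Cour{\tau,\sigma}\big)-\mbf{a}^*\la\nabla\sigma,\tau\ra-\mbf{a}^*\la\nabla\tau,\sigma\ra.
\end{equation*}

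The first grouped term is exactly the subject of axiom c3), which yields $\Cour{\sigma,\tau}+\Cour{\tau,\sigma}=\mbf{a}^*\big(\d\la\sigma,\tau\ra\big)$. For the remaining two terms I would use the symmetry of the fibre metric to rewrite $\la\nabla\tau,\sigma\ra=\la\sigma,\nabla\tau\ra$, and then pull the linear map $\mbf{a}^*$ out front. This collects everything into
\begin{equation*}
[\sigma,\tau]+[\tau,\sigma]=\mbf{a}^*\big(\d\la\sigma,\tau\ra-\la\nabla\sigma,\tau\ra-\la\sigma,\nabla\tau\ra\big).
\end{equation*}
The expression inside the parentheses is precisely the left side minus the right side of the metric-compatibility axiom \cref{eq:NabPairDer}, hence it vanishes identically for $\sigma,\tau\in\Gamma(W)$. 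Applying $\mbf{a}^*$ to zero then gives $[\sigma,\tau]+[\tau,\sigma]=0$, which is the desired skew symmetry.

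There is no genuine obstacle here: the argument is a direct matching of two axioms, with the only point requiring minor care being the bookkeeping of the $W$--$W^*$ pairing notation (checking that $\la\nabla\sigma,\tau\ra$ and $\la\sigma,\nabla\tau\ra$ are the $\Omega^1(M)$-valued contractions entering \cref{eq:NabPairDer}, and that $\mbf{a}^*$ is $\mbb{R}$-linear so it may be factored out). Notably, the Leibniz axiom \cref{eq:NabDer} plays no role in skew symmetry; it is only \cref{eq:NabPairDer} that is needed, which is exactly why the paper singles out \cref{eq:NabPairDer} in the introduction as the axiom guaranteeing skewness of the modified bracket.
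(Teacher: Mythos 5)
Your proposal is correct and follows exactly the same route as the paper's proof: expand $[\sigma,\tau]+[\tau,\sigma]$, apply axiom c3) to the symmetrized Courant bracket, and cancel the result against the two pseudo-connection terms via the metric-compatibility axiom \labelcref{eq:NabPairDer}. Nothing is missing.
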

\begin{proof}
 Axioms (c3) for the Courant bracket (see \cref{def:CA}) implies that, 
for any $\sigma,\tau\in\Gamma(W)$, we have
\begin{align*}
[\sigma,\tau]+[\tau,\sigma]&=\Cour{\sigma,\tau}+\Cour{\tau,\sigma}-\mbf{a}^*\la \nabla\sigma,\tau\ra-\mbf{a}^*\la \nabla\tau,\sigma\ra\\
&=\mbf{a}^*\d\la\sigma,\tau\ra-\mbf{a}^*\la \nabla\sigma,\tau\ra-\mbf{a}^*\la \nabla\tau,\sigma\ra\\
&=0.
\end{align*}
Here the final equality follows from \cref{eq:NabPairDer}.
\end{proof}

\begin{proposition}\label{prop:TorsTens}
\Cref{eq:torsTens}, which we restate here, $$T(\sigma,\tau,\upsilon)=\la\nabla_{\mbf{a}(\sigma)}\tau-\nabla_{\mbf{a}(\tau)}\sigma-[\sigma,\tau],\upsilon\ra,\quad\sigma,\tau,\upsilon\in\Gamma(W),$$ defines a skew symmetric tensor. That is, $T\in\Gamma(\wedge^3W^*)$.
\end{proposition}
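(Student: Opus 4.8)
The plan is to establish two things: that $T$ is $C^\infty(M)$-multilinear (so that it is a genuine tensor, not merely an $\mathbb{R}$-trilinear form on sections) and that it is totally skew-symmetric. It is convenient to read the right-hand side of \cref{eq:torsTens} as a sum of three scalar pairings,
$$T(\sigma,\tau,\upsilon)=\la\nabla_{\mbf{a}(\sigma)}\tau,\upsilon\ra-\la\nabla_{\mbf{a}(\tau)}\sigma,\upsilon\ra-\la[\sigma,\tau],\upsilon\ra,$$
where the first two are the natural $W^*$--$W$ pairings and the last is the Courant metric. Read this way, $C^\infty(M)$-linearity in the slot $\upsilon$ is immediate. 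Antisymmetry in the first two slots is equally immediate: interchanging $\sigma$ and $\tau$ negates the first two terms, while $[\tau,\sigma]=-[\sigma,\tau]$ by \cref{lem:ModBrkProp1}, so $T(\sigma,\tau,\upsilon)+T(\tau,\sigma,\upsilon)=0$.

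The crux of the argument---and the step I expect to require the most care---is to show that $T(\sigma,\tau,\tau)=0$ for all $\sigma,\tau\in\Gamma(W)$. First, applying the metric-compatibility axiom \cref{eq:NabPairDer} with both entries equal to $\tau$ and evaluating on $\mbf{a}(\sigma)$ gives $\la\nabla_{\mbf{a}(\sigma)}\tau,\tau\ra=\tfrac12\,\mbf{a}(\sigma)\la\tau,\tau\ra$. Second, Courant axiom (c2) with $\sigma_2=\sigma_3=\tau$ gives $\la\Cour{\sigma,\tau},\tau\ra=\tfrac12\,\mbf{a}(\sigma)\la\tau,\tau\ra$, while the duality identity $\la\mbf{a}^*\beta,\tau\ra=\beta(\mbf{a}(\tau))$ turns the correction term of the modified bracket into $\la\mbf{a}^*\la\nabla\sigma,\tau\ra,\tau\ra=\la\nabla_{\mbf{a}(\tau)}\sigma,\tau\ra$; hence $\la[\sigma,\tau],\tau\ra=\tfrac12\,\mbf{a}(\sigma)\la\tau,\tau\ra-\la\nabla_{\mbf{a}(\tau)}\sigma,\tau\ra$. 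Substituting these three evaluations into $T(\sigma,\tau,\tau)$, the two copies of $\tfrac12\,\mbf{a}(\sigma)\la\tau,\tau\ra$ cancel and the two copies of $\la\nabla_{\mbf{a}(\tau)}\sigma,\tau\ra$ cancel, leaving $0$.

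Finally, I would assemble the pieces. Polarizing $T(\sigma,\tau,\tau)=0$ in the last two arguments yields antisymmetry in slots $(2,3)$; combined with the antisymmetry in slots $(1,2)$ already noted, $T$ changes sign under every transposition, hence is totally skew-symmetric as an $\mathbb{R}$-trilinear map on sections. Now combine total skew-symmetry with the $C^\infty(M)$-linearity in the last slot: for any $f\in C^\infty(M)$, permuting the desired slot into the last position, applying linearity there, and permuting back shows that $T$ is $C^\infty(M)$-linear in every slot. A totally skew-symmetric $C^\infty(M)$-trilinear map $\Gamma(W)^{\times 3}\to C^\infty(M)$ is precisely a section of $\wedge^3 W^*$, which is the claim. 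The only genuine obstacle is the bookkeeping in the vanishing of $T(\sigma,\tau,\tau)$; once the three evaluations above are in hand, everything else is formal.
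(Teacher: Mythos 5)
Your proof is correct. You and the paper agree on the easy parts: antisymmetry in the first two slots follows from \cref{lem:ModBrkProp1}, and tensoriality in all slots follows from total skew-symmetry together with the manifest $C^\infty(M)$-linearity in the last slot. Where you diverge is in how the remaining symmetry is obtained. The paper proves the cyclic identity $T(\sigma,\tau,\upsilon)=T(\upsilon,\sigma,\tau)$ by a chain of rewritings --- expanding $[\sigma,\tau]$ via \cref{eq:pcModBrk}, moving the Courant bracket with axiom (c2), and trading derivative terms with \cref{eq:NabPairDer} --- and then combines cyclicity with the $(1,2)$-antisymmetry. You instead prove the vanishing on the diagonal, $T(\sigma,\tau,\tau)=0$, using exactly the same three ingredients (metric compatibility of $\nabla$, axiom (c2), and the duality identity $\la\mbf{a}^*\beta,\tau\ra=\beta(\mbf{a}(\tau))$ applied to the correction term of the modified bracket), and then polarize to get $(2,3)$-antisymmetry. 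The two routes are logically equivalent and draw on the same facts, but yours isolates the cancellation in a single scalar identity at $\upsilon=\tau$, which is shorter and makes the mechanism more transparent, whereas the paper's cyclic computation has the minor advantage of exhibiting the alternative closed form $T(\sigma,\tau,\upsilon)=\la\nabla_{\mbf{a}(\upsilon)}\sigma-\nabla_{\mbf{a}(\sigma)}\upsilon-[\upsilon,\sigma],\tau\ra$ along the way. All three of your displayed evaluations check out, and the final polarization and tensoriality steps are sound.
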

\begin{proof}
First we show that $T$ is cyclic in its arguments. Combining \cref{eq:pcModBrk,eq:torsTens} yields
\begin{align*}
T(\sigma,\tau,\upsilon)&=\la\nabla_{\mbf{a}(\sigma)}\tau-\nabla_{\mbf{a}(\tau)}\sigma-[\sigma,\tau],\upsilon\ra\\
&=\la\nabla_{\mbf{a}(\sigma)}\tau-\nabla_{\mbf{a}(\tau)}\sigma-\Cour{\sigma,\tau} +\mbf{a}^*\la\nabla\sigma,\tau\ra,\upsilon\ra\\
&=\la\nabla_{\mbf{a}(\sigma)}\tau-\nabla_{\mbf{a}(\tau)}\sigma+\mbf{a}^*\la\nabla\sigma,\tau\ra,\upsilon\ra  -\mbf{a}(\sigma)\la\tau,\upsilon\ra+\la\tau,\Cour{\sigma,\upsilon}\ra,
\end{align*}
where we used axiom (c2) for the Courant bracket in the last line. Hence
\begin{align*}
T(\sigma,\tau,\upsilon)
&=\la\nabla_{\mbf{a}(\sigma)}\tau-\nabla_{\mbf{a}(\tau)}\sigma,\upsilon\ra+\la\nabla_{\mbf{a}(\upsilon)}\sigma,\tau\ra  -\mbf{a}(\sigma)\la\tau,\upsilon\ra+\la\tau,\Cour{\sigma,\upsilon}\ra\\
&=\la\nabla_{\mbf{a}(\sigma)}\tau-\nabla_{\mbf{a}(\tau)}\sigma,\upsilon\ra+\la\nabla_{\mbf{a}(\upsilon)}\sigma,\tau\ra  -\la\nabla_{\mbf{a}(\sigma)}\tau,\upsilon\ra-\la\tau,\nabla_{\mbf{a}(\sigma)}\upsilon\ra+\la\tau,\Cour{\sigma,\upsilon}\ra\\
&=\la\nabla_{\mbf{a}(\upsilon)}\sigma-\nabla_{\mbf{a}(\sigma)}\upsilon+\Cour{\sigma,\upsilon},\tau\ra-\la\nabla_{\mbf{a}(\tau)}\sigma,\upsilon\ra,
\end{align*}
where we used \cref{eq:NabPairDer} in the second line. Finally, using \cref{eq:pcModBrk} again, we get
\begin{align*}
T(\sigma,\tau,\upsilon)
&=\la\nabla_{\mbf{a}(\upsilon)}\sigma-\nabla_{\mbf{a}(\sigma)}\upsilon+[\sigma,\upsilon],\tau\ra\\
&=\la\nabla_{\mbf{a}(\upsilon)}\sigma-\nabla_{\mbf{a}(\sigma)}\upsilon-[\upsilon,\sigma],\tau\ra\\
&=T(\upsilon,\sigma,\tau),
\end{align*}
where we used \cref{lem:ModBrkProp1} to obtain the second equality.
 
\Cref{lem:ModBrkProp1} implies that $T$ is skew symmetric in the first two arguments, and since it is also cyclic, it must be totally skew symmetric. Moreover, it is manifestly tensorial in the last argument, and hence tensorial in all its arguments.
\end{proof}

\begin{lemma}\label{lem:ModBrkProp2}
Suppose that $\mbb{E}\to M$ is a Courant algebroid and $\nabla$ is a pseudo-connection  for the subbundle $W\subseteq \mbb{E}$. Let $L\subseteq T\mbb{E}$ be the  Lagrangian double vector subbundle corresponding to $(W,\nabla)$ via \cref{prop:clasLagTDSB}.
Let $L_C=T\mbb{E}_C\cap L$, where $T\mbb{E}_C$ is the vertical subbundle of $T\mbb{E}\to TM$, as defined in \cref{prop:FreeVBDir}.
The modified bracket \labelcref{eq:pcModBrk} takes values in $\Gamma(W)$ if and only if $$\Cour{\Gamma_l(L,TM),\Gamma_l(L_C,TM)}\subseteq\Gamma_l(L_C,TM).$$
\end{lemma}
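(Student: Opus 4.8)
The plan is to convert the involutivity condition $\Cour{\Gamma_l(L,TM),\Gamma_l(L_C,TM)}\subseteq\Gamma_l(L_C,TM)$ into a pointwise statement about the modified bracket by pairing against tangent lifts. First I would record the structural facts I need. By \cref{lem:CoreOfLagDVB} the core of $L$ is $W^\perp$, so the core sequence \labelcref{eq:ShrtLinSeq} for $L$ reads $0\to\Gamma(T^*M\otimes W^\perp)\xrightarrow{i}\Gamma_l(L,TM)\xrightarrow{q_{T\mbb{E}/\mbb{E}}}\Gamma(W)\to 0$, and its restriction to the core identifies $\Gamma_l(L_C,TM)=i\big(\Gamma(T^*M\otimes W^\perp)\big)$. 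Thus a typical element of $\Gamma_l(L,TM)$ is a lift $\tilde\sigma$ of some $\sigma\in\Gamma(W)$ (with $q_{T\mbb{E}/\mbb{E}}\tilde\sigma=\sigma$), while a typical element of $\Gamma_l(L_C,TM)$ has the form $\eta=i\mu$ with $\mu\in\Gamma(T^*M\otimes W^\perp)$.

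Next I would set up a detection criterion for the core value. For any core-linear section $i\nu\in\Gamma_l(T\mbb{E}_C,TM)$ one has $\la i\nu,\upsilon_T\ra=\la\nu,\upsilon\ra$ (a linear function on $TM$) and $\la i\nu,\upsilon_C\ra=0$; since $\nu$ takes values in $W^\perp$ precisely when $\la\nu,\upsilon\ra=0$ for all $\upsilon\in\Gamma(W)$, this gives: $i\nu\in\Gamma_l(L_C,TM)$ if and only if $\la i\nu,\upsilon_T\ra=0$ for all $\upsilon\in\Gamma(W)$. By \cref{prop:FreeVBDir} (specifically \cref{eq:ECalmostideal}) the bracket $\Cour{\tilde\sigma,\eta}$ already lies in $\Gamma_l(T\mbb{E}_C,TM)$, so the only issue is whether its core value stays in $W^\perp$, which this criterion detects.

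The computational heart is the identity, for $\sigma,\upsilon\in\Gamma(W)$ and $\mu\in\Gamma(T^*M\otimes W^\perp)$,
$$\la\Cour{\tilde\sigma,i\mu},\upsilon_T\ra=-\la\mu,[\sigma,\upsilon]\ra.$$
I would prove it by applying axiom (c2): since $\la i\mu,\upsilon_T\ra=\la\mu,\upsilon\ra=0$ (as $\mu$ is $W^\perp$-valued and $\upsilon\in W$), the anchor term drops out and $\la\Cour{\tilde\sigma,i\mu},\upsilon_T\ra=-\la i\mu,\Cour{\tilde\sigma,\upsilon_T}\ra$. Because the Courant bracket of linear sections is again linear, $\Cour{\tilde\sigma,\upsilon_T}$ is linear, so $\la i\mu,\Cour{\tilde\sigma,\upsilon_T}\ra=\la\mu,q_{T\mbb{E}/\mbb{E}}\Cour{\tilde\sigma,\upsilon_T}\ra$; and \cref{cor:linSecBrk}, combined with the identity $\la\sigma',\upsilon\ra=-\la\nabla\sigma,\upsilon\ra$ coming from the correspondence $\nabla\sigma=q_L(\sigma_T)$ of \cref{prop:clasLagTDSB}, yields $q_{T\mbb{E}/\mbb{E}}\Cour{\tilde\sigma,\upsilon_T}=\Cour{\sigma,\upsilon}-\mbf{a}^*\la\nabla\sigma,\upsilon\ra=[\sigma,\upsilon]$.

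Finally I would assemble the equivalence. Combining the detection criterion with the identity, $\Cour{\tilde\sigma,i\mu}\in\Gamma_l(L_C,TM)$ if and only if $\la\mu,[\sigma,\upsilon]\ra=0$ for all $\upsilon\in\Gamma(W)$. Quantifying over all $\eta=i\mu$ (equivalently all $\mu\in\Gamma(T^*M\otimes W^\perp)$) and all $\tilde\sigma$ (equivalently all $\sigma\in\Gamma(W)$), the inclusion $\Cour{\Gamma_l(L,TM),\Gamma_l(L_C,TM)}\subseteq\Gamma_l(L_C,TM)$ holds if and only if $\la w,[\sigma,\upsilon]\ra=0$ for all $w\in W^\perp$ and all $\sigma,\upsilon\in\Gamma(W)$, i.e. if and only if $[\sigma,\upsilon]\in(W^\perp)^\perp=W$ for all $\sigma,\upsilon$ — which is exactly the statement that the modified bracket \labelcref{eq:pcModBrk} is $W$-valued. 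I expect the main obstacle to be the bookkeeping around linear versus core sections: establishing cleanly that $\Cour{\tilde\sigma,\upsilon_T}$ is linear so that the pairing identity $\la i\mu,\zeta\ra=\la\mu,q_{T\mbb{E}/\mbb{E}}\zeta\ra$ may be applied, and noting that the membership test is insensitive to the chosen lift $\tilde\sigma$ of $\sigma$ (two lifts differ by a section of $L_C$, whose $\mbb{E}$-value lies in $W^\perp$ and hence pairs trivially with $W$).
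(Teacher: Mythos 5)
Your proof is correct and follows essentially the same route as the paper's: both arguments hinge on axiom (c2) to trade $\la\Cour{\tilde\sigma,\cdot},\cdot\ra$ for $\la\cdot,\Cour{\tilde\sigma,\cdot}\ra$, on \cref{cor:linSecBrk} together with the identity $\la\sigma',\upsilon\ra=-\la\nabla\sigma,\upsilon\ra$ to recognize the projected bracket of linear sections as the modified bracket, and on \cref{eq:ECalmostideal} plus the identifications $\Gamma_l(L_C,TM)\cong\Gamma(T^*M\otimes W^\perp)$ and $(W^\perp)^\perp=W$. The only difference is organizational: the paper pairs $\Cour{\tilde\sigma,\tilde\tau}$ against core-linear sections of $L_C$ and tests membership in the Lagrangian $L$, whereas you compute $\Cour{\tilde\sigma,i\mu}$ directly and test it against tangent lifts $\upsilon_T$ --- the two computations being exchanged by precisely the application of (c2) that both proofs make.
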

\begin{proof}
Let $\sigma,\tau\in\Gamma(W)$, and suppose $\tilde\sigma,\tilde\tau\in\Gamma_l(L,TM)$ are $q_{L/W}$-related to $\sigma$ and $\tau$, respectively. That is
$$q_{T\mbb{E}/\mbb{E}}( \tilde\sigma)=\sigma,\quad q_{T\mbb{E}/\mbb{E}}(\tilde\tau)=\tau.$$
Then \cref{cor:linSecBrk} and \cref{eq:IndBrk} imply that 
$$q_{T\mbb{E}/\mbb{E}}(\Cour{\tilde\sigma,\tilde\tau})=[\sigma,\tau],$$
where the bracket on the right hand side is the modified bracket \labelcref{eq:pcModBrk}.

The short exact sequence \labelcref{eq:ShrtLinSeq} defines an isomorphism $\Gamma(T^*M\otimes W^\perp)\cong\Gamma_l(L_C,TM)$. Thus 
$[\sigma,\tau]\in\Gamma(W)$ if and only if for any $\upsilon\in\Gamma(T^*M\otimes W^\perp)\cong\Gamma_l(L_C,TM)$, we have
$$\la [\sigma,\tau],\upsilon\ra=\la \Cour{\tilde\sigma,\tilde\tau},\upsilon\ra=0,$$
where the first equality follows from \cref{eq:TEPair}.
But Axiom (c2) for the Courant algebroid implies
$$\la \Cour{\tilde\sigma,\tilde\tau},\upsilon\ra=\la\tilde\tau,\Cour{\tilde\sigma,\upsilon}\ra.$$
The right hand side vanishes for arbitrary $\tilde\sigma,\tilde\tau\in\Gamma_l(L,TM)$ and $\upsilon\in\Gamma_l(L_C,TM)$ if and only if 
$$\Cour{\Gamma_l(L,TM),\Gamma_l(L_C,TM)}\subseteq\Gamma_l(L,TM).$$
However, \cref{eq:ECalmostideal} implies that the left hand side necessarily lies in $\Gamma_l(T\mbb{E}_C,TM)$. Therefore, we conclude that 
$$\Cour{\Gamma_l(L,TM),\Gamma_l(L_C,TM)}\subseteq\Gamma_l(L_C,TM)$$
if and only if 
$$[\Gamma(W),\Gamma(W)]\subseteq \Gamma(W).$$ 

%
%
%
\end{proof}

\begin{proposition}\label{lem:PsiProp}
Suppose that $\nabla$ is a pseudo-connection  for the subbundle $W\subseteq \mbb{E}$, and that the modified bracket \labelcref{eq:pcModBrk} takes values in $\Gamma(W)$. 
Let $L\subseteq T\mbb{E}$ be the Lagrangian double vector subbundle corresponding to $(W,\nabla)$ via \cref{prop:clasLagTDSB}.
\begin{itemize}
\item
The following expression: \begin{equation*}\begin{split}
\Psi(\sigma,\tau,\upsilon)=&\la[\sigma,\tau],\nabla\upsilon\ra+\la[\upsilon,\sigma],\nabla\tau\ra+\la[\tau,\upsilon],\nabla\sigma\ra\\
&+\iota_{\mbf{a}(\sigma)}\d\la\nabla\tau,\upsilon\ra+\iota_{\mbf{a}(\upsilon)}\d\la\nabla\sigma,\tau\ra+\iota_{\mbf{a}(\tau)}\d\la\nabla\upsilon,\sigma\ra\\
&+\d T(\sigma,\tau,\upsilon),
\end{split}\end{equation*}
for $\sigma,\tau,\upsilon\in\Gamma(W)$, defines a skew symmetric tensor on $W$. That is,  $$\Psi\in\Omega^1(M, \wedge^3 W^*).$$
\item $L$ is a Dirac structure if and only if $\Psi=0$.
\end{itemize}
\end{proposition}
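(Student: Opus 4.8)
The plan is to realize the expression $\Psi$ as the single surviving component of the \emph{Courant tensor} of the Lagrangian subbundle $L\subseteq T\mbb{E}$ corresponding to $(W,\nabla)$ under \cref{prop:clasLagTDSB}, and to extract both assertions from its formal properties. The starting observation is standard: for any Lagrangian subbundle $L$ of a Courant algebroid, the trilinear map $\Upsilon_L(\ell_1,\ell_2,\ell_3):=\la\Cour{\ell_1,\ell_2},\ell_3\ra$ is a totally skew-symmetric tensor, $\Upsilon_L\in\Gamma(\wedge^3 L^*)$, and $L$ is involutive (hence, being Lagrangian, Dirac) if and only if $\Upsilon_L=0$. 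I would record this as a short preliminary computation for $L\subseteq T\mbb{E}$: skew-symmetry in the first two slots comes from axiom (c3) of \cref{def:CA} together with the isotropy $\la\ell_1,\ell_2\ra=0$; skew-symmetry in the last two from axiom (c2); and $C^\infty(TM)$-linearity from axiom (c5), where isotropy again kills the correction terms. The equivalence with involutivity follows since $\Upsilon_L(\ell_1,\ell_2,\cdot)=0$ forces $\Cour{\ell_1,\ell_2}\in L^\perp=L$.

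Next I would reduce $\Upsilon_L$ to its linear part. As $L$ is a double vector subbundle, $\Gamma(L,TM)$ is generated over $C^\infty(TM)$ by linear sections $\Gamma_l(L,TM)$ and core sections $\Gamma_C(L,TM)\cong\Gamma(W^\perp)$ (\cref{lem:CoreOfLagDVB}). Evaluating $\Upsilon_L$ on all combinations, every component involving a core section vanishes under the standing hypothesis that the modified bracket takes values in $\Gamma(W)$: components with two core entries vanish because $\Cour{\upsilon_{1,C},\upsilon_{2,C}}=0$ by \cref{eq:TEBrk}, while the linear--linear--core component is $\la\Cour{\tilde\sigma,\tilde\tau},\upsilon_C\ra=\la q_{T\mbb{E}/\mbb{E}}\Cour{\tilde\sigma,\tilde\tau},\upsilon\ra_C$ by \cref{eq:TEPair}, which equals $\la[\sigma,\tau],\upsilon\ra_C=0$ by \cref{cor:linSecBrk} (here $[\sigma,\tau]\in\Gamma(W)$ and $\upsilon\in\Gamma(W^\perp)$). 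Hence $\Upsilon_L$ is completely determined by its values on triples of linear sections.

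For linear $\tilde\sigma,\tilde\tau,\tilde\upsilon$ the pairing $\la\Cour{\tilde\sigma,\tilde\tau},\tilde\upsilon\ra$ is a fibrewise-linear function on $TM$ and so corresponds to a $1$-form on $M$; by the previous step it is unchanged if the lifts are altered by core sections, so it depends only on the projections $\sigma,\tau,\upsilon\in\Gamma(W)$. Declaring this $1$-form to be $\Psi(\sigma,\tau,\upsilon)$, the tensoriality and total skew-symmetry of $\Upsilon_L$ descend directly to $\Psi$ (for the scaling $\sigma\mapsto f\sigma$ with $f\in C^\infty(M)$, the lift scales by the pullback $f_C$ up to a core term, via \cref{eq:TCDer}, and fibrewise-linear functions correspond $C^\infty(M)$-linearly to $1$-forms). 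This yields $\Psi\in\Omega^1(M,\wedge^3 W^*)$, the content of the first bullet, once we know this intrinsic $\Psi$ matches the stated formula.

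The remaining task, and the main obstacle, is precisely this matching. I would use the explicit lifts $\tilde\sigma=\sigma_T+i\sigma'$ with $\nabla\sigma=-q_L\sigma'$ as in the proof of \cref{prop:qIndBrk}, expand $\la\Cour{\tilde\sigma,\tilde\tau},\tilde\upsilon\ra$ bilinearly into the four brackets $\Cour{\sigma_T,\tau_T}$, $\Cour{\sigma_T,i\tau'}$, $\Cour{i\sigma',\tau_T}$, $\Cour{i\sigma',i\tau'}$ paired against $\upsilon_T+i\upsilon'$, and evaluate each summand using \cref{eq:TEPair,eq:TEBrk}, the Courant axioms on $T\mbb{E}$, and the derivation rules \cref{eq:TCDer}, translating fibrewise-linear functions on $TM$ back into $1$-forms at the end. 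The term coming from $\Cour{\sigma,\tau}_T$ should reorganize into the bracket and $\d T$ contributions, and the mixed terms into the cyclic $\iota_{\mbf{a}(\cdot)}\d\la\nabla\cdot,\cdot\ra$ line; this bookkeeping — sorting many tangent- and core-lift terms into the three cyclic groups of the formula — is lengthy but purely formal. With the matching in hand, the second bullet is immediate: since all non-linear components of $\Upsilon_L$ vanish identically under the hypothesis, $L$ is a Dirac structure if and only if $\Upsilon_L=0$, which holds if and only if $\Psi=0$.
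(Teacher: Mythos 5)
Your overall strategy coincides with the paper's: both proofs identify $\Psi$ with the restriction of the trilinear form $\la\Cour{\cdot,\cdot},\cdot\ra$ on the Lagrangian subbundle $L$ to linear sections, show that this restriction descends to a tensor on $W$ because every component involving a section of $L_C=L\cap T\mbb{E}_C$ vanishes (this is where the hypothesis $[\Gamma(W),\Gamma(W)]\subseteq\Gamma(W)$ enters, via \cref{lem:ModBrkProp2} in the paper and via \cref{cor:linSecBrk} in your version), and then deduce the equivalence with involutivity from $L=L^\perp$. Your packaging through the general fact that $\Upsilon_L:=\la\Cour{\cdot,\cdot},\cdot\ra$ lies in $\Gamma(\wedge^3L^*)$ for any Lagrangian subbundle is a clean substitute for the paper's \cref{eq:LCTM1,eq:LCTM2,eq:LCTM3}, and it makes the total skew-symmetry of $\Psi$ explicit where the paper leaves it implicit. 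Those parts are correct.

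The gap is the step you yourself flag and then skip: you never verify that the intrinsically defined tensor equals the displayed formula, and that identification is the actual content of the proposition (and the bulk of the paper's proof, \cref{eq:PsiLstLine1,eq:PsiLstLine2,eq:Psi_La}). It is not mere bookkeeping. One must (i) use $\la\sigma',\upsilon_T\ra=-\la\nabla\sigma,\upsilon\ra$, and hence $\mbf{a}^*\d\la\sigma',\upsilon_T\ra=-\mbf{a}^*\la\nabla\sigma,\upsilon\ra_T$, to dispose of the $\Cour{\sigma',\tau'}$ term after rewriting it with axioms (c2) and (c3) of \cref{def:CA}; (ii) apply the Cartan-type identity $\mbf{a}(\sigma_T)\la\nabla\tau,\upsilon\ra=\Lied_{\mbf{a}(\sigma)}\la\nabla\tau,\upsilon\ra=\d\la\nabla_{\mbf{a}(\sigma)}\tau,\upsilon\ra+\iota_{\mbf{a}(\sigma)}\d\la\nabla\tau,\upsilon\ra$ to each of the three mixed terms; and (iii) recognize that the resulting exact terms $\d\la\Cour{\sigma,\tau},\upsilon\ra$, $\d\la\nabla_{\mbf{a}(\upsilon)}\sigma,\tau\ra$ and $\d\la\nabla_{\mbf{a}(\cdot)}\cdot,\cdot\ra$ assemble into $\d T$ via \cref{eq:torsTens} and the cyclicity established in \cref{prop:TorsTens}. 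Until this is carried out you have only shown that \emph{some} tensor in $\Omega^1(M,\wedge^3W^*)$ obstructs the involutivity of $L$, not that the stated expression does; in particular the appearance of the $\d T$ term and the overall sign (the paper's \cref{eq:LinvLinTens} carries a minus sign that your sketch suppresses) are exactly the points your argument does not establish.
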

\begin{proof}
Let $L_C=T\mbb{E}_C\cap L$, where $T\mbb{E}_C$ is the vertical subbundle of $T\mbb{E}\to TM$, as defined in \cref{prop:FreeVBDir}. Then \cref{lem:ModBrkProp2} implies that 
\begin{subequations}\label[pluralequation]{eq:LCTM}
\begin{equation}\label{eq:LCTM1}\Cour{\Gamma_l(L,TM),\Gamma_l(L_C,TM)}\subseteq \Gamma_l(L_C,TM).\end{equation}
 Since $L$ is Lagrangian, we also have 
 \begin{equation}\label{eq:LCTM2}\Cour{\Gamma_l(L_C,TM),\Gamma_l(L,TM)}\subseteq \Gamma_l(L_C,TM).\end{equation}  
 Now,  
 \begin{equation}\label{eq:LCTM3}q_L\big(\Gamma_l(L_C,TM)\big)=0,\end{equation}
 \end{subequations}
 where $$q_L:T\mbb{E}\rvert_W\to T\mbb{E}\rvert_W/L\cong W^*$$ is the double quotient map \labelcref{eq:DVBtotLagQuo}
defined \cref{prop:DVSBandConnect}. 

Combining \cref{eq:LCTM1,eq:LCTM2,eq:LCTM3} we see that for $\tilde\sigma,\tilde\tau\in\Gamma_l(L,TM)$, the expression $q_L\Cour{\tilde\sigma,\tilde\tau}$ only depends on $q_{L/W}\circ\tilde\sigma$ and $q_{L/W}\circ\tilde\tau$.
 Consequently, if $\tilde\sigma,\tilde\tau,\tilde\upsilon\in\Gamma_l(L,TM)$ are $q_{L/W}$ related to $\sigma,\tau,\upsilon\in\Gamma(W)$,  the expression 
\begin{equation}\label{eq:LinvLinTens}
-\la\Cour{\tilde\sigma,\tilde\tau},\tilde\upsilon\ra\in \Gamma_l(TM\times \mbb{R},TM)\cong\Omega^1(M)
\end{equation}
 only depends on $\sigma,\tau,\upsilon\in\Gamma(W)$. Hence \labelcref{eq:LinvLinTens} defines a tensor  $\Psi\in\Omega^1(M, \wedge^3 W^*)$ measuring the involutivity of $L$, which we shall now calculate directly. 
 
 To simplify notation, we let $\sigma'=\tilde\sigma-\sigma_T$, $\tau'=\tilde\tau-\tau_T$ and $\upsilon'=\tilde\upsilon-\upsilon_T$, and remark that $\sigma',\tau',\upsilon'\in\Gamma_l(T\mbb{E}_C,TM)$. 
Since $q_L(\tilde\sigma)=q_L(\tilde\tau)=q_L(\tilde\upsilon)=0$,  \cref{eq:NabFormFromK} implies that
\begin{align}\label{eq:qsig'}
q_L(\sigma')&=-\nabla(\sigma), &
q_L(\tau')&=-\nabla(\tau),&
q_L(\upsilon')&=-\nabla(\upsilon).
\end{align}
Plugging the last of these into $\Psi$ and simplifying, 
we get
\begin{align}
-\Psi(\sigma,\tau,\upsilon)
=&\la\Cour{\tilde\sigma,\tilde\tau},\upsilon'+\upsilon_T\ra\notag\\
=&-\la[\sigma,\tau],\nabla\upsilon\ra+\la\Cour{\sigma,\tau}_T+\Cour{\sigma_T,\tau'}+\Cour{\sigma',\tau_T}+\Cour{\sigma',\tau'},\upsilon_T\ra\notag\\
=&-\la[\sigma,\tau],\nabla\upsilon\ra+\la\Cour{\sigma,\tau}_T,\upsilon_T\ra-\la\Cour{\sigma,\upsilon}_T,\tau'\ra+\la\Cour{\tau,\upsilon}_T,\sigma'\ra\notag\\
&+\mbf{a}(\sigma_T)\la\tau',\upsilon_T\ra-\mbf{a}(\tau_T)\la\sigma',\upsilon_T\ra+\mbf{a}(\upsilon_T)\la\sigma',\tau_T\ra+\la\Cour{\sigma',\tau'},\upsilon_T\ra. \label{eq:PsiLstLine1}
\end{align}
To obtain the last line we rearranged terms using axioms (c2) and (c3) for a Courant algebroid (see \cref{def:CA}). Using (c2) and (c3) again, we notice that $$\la\Cour{\sigma',\tau'},\upsilon_T\ra=\la\mbf{a}^*\d\la\tau',\upsilon_T\ra,\sigma'\ra-\la\mbf{a}^*\d\la\sigma',\upsilon_T\ra,\tau'\ra+\la\tau',\Cour{\upsilon_T,\sigma'}\ra,$$ and the third term vanishes since $\Cour{\upsilon_T,\sigma'}\in\Gamma(T\mbb{E}_C,TM)$. Next, using \cref{eq:restPair,eq:qsig'} notice that $\la\sigma',\upsilon_T\ra=-\la\nabla\sigma,\upsilon\ra$ and hence $\mbf{a}^*\d\la\sigma',\upsilon_T\ra=-\mbf{a}^*\la\nabla\sigma,\upsilon\ra_T$.
Substituting the corresponding expressions for various permutations of $\sigma,\tau$ and $\upsilon$ into \cref{eq:PsiLstLine1}, we get
\begin{align*}
-\Psi(\sigma,\tau,\upsilon)=&-\la[\sigma,\tau],\nabla\upsilon\ra+\la\Cour{\sigma,\tau}_T,\upsilon_T\ra-\la\Cour{\sigma,\upsilon}_T,\tau'\ra+\la\Cour{\tau,\upsilon}_T,\sigma'\ra\\
&-\mbf{a}(\sigma_T)\la\nabla\tau,\upsilon\ra+\mbf{a}(\tau_T)\la\nabla\sigma,\upsilon\ra-\mbf{a}(\upsilon_T)\la\nabla\sigma,\tau\ra\\
&-\la\mbf{a}^*\la\nabla\tau,\upsilon\ra_T,\sigma'\ra+\la\mbf{a}^*\la\nabla\sigma,\upsilon\ra_T,\tau'\ra.
\end{align*}
Using the definition \labelcref{eq:pcModBrk} of the bracket, we get
\begin{align}
-\Psi(\sigma,\tau,\upsilon)=&-(\la[\sigma,\tau],\nabla\upsilon\ra+\la[\upsilon,\sigma],\nabla\tau\ra+\la[\tau,\upsilon],\nabla\sigma\ra)\notag\\
&+\d\la[\sigma,\tau],\upsilon\ra+\d\la\nabla_{\mbf{a}(\upsilon)}\sigma,\tau\ra\notag\\
&-\mbf{a}(\sigma_T)\la\nabla\tau,\upsilon\ra+\mbf{a}(\tau_T)\la\nabla\sigma,\upsilon\ra-\mbf{a}(\upsilon_T)\la\nabla\sigma,\tau\ra.\label{eq:PsiLstLine2}
\end{align}

Now $\mbf{a}(\sigma_T)\la\nabla\tau,\upsilon\ra=\Lied_{\mbf{a}(\sigma)}\la\nabla\tau,\upsilon\ra=\d\la\nabla_{\mbf{a}(\sigma)}\tau,\upsilon\ra+\iota_{\mbf{a}(\sigma)}\d\la\nabla\tau,\upsilon\ra$. Substituting the corresponding expression for various permutations of $\sigma,\tau$ and $\upsilon$ into \cref{eq:PsiLstLine2}, we get
%
%
\begin{equation}\label{eq:Psi_La}
\begin{split}
\Psi(\sigma,\tau,\upsilon)=&\la[\sigma,\tau],\nabla\upsilon\ra+\la[\upsilon,\sigma],\nabla\tau\ra+\la[\tau,\upsilon],\nabla\sigma\ra\\
&+\iota_{\mbf{a}(\sigma)}\d\la\nabla\tau,\upsilon\ra+\iota_{\mbf{a}(\upsilon)}\d\la\nabla\sigma,\tau\ra+\iota_{\mbf{a}(\tau)}\d\la\nabla\upsilon,\sigma\ra\\
&+\d T(\sigma,\tau,\upsilon),
\end{split}\end{equation}
\end{proof}

\begin{proof}[Proof of \cref{thm:LieSubIsVBDir}]
\Cref{prop:clasLagTDSB,lem:PsiProp} establish a one-to-one correspondence between $\mc{VB}$-Dirac structures of the form \labelcref{eq:VBDirTngThm}, and pseudo-Dirac structures $(W,\nabla)$ in $\mbb{E}$. Meanwhile \cref{prop:qIndBrk} states that the Lie bracket on $\Gamma(W)$ is given by the formula
$$[\sigma,\tau]:=\Cour{\sigma,\tau}-\mbf{a}^*\la \nabla\sigma,\tau\ra,\quad\sigma,\tau\in\Gamma(W).$$

\end{proof}

\begin{remark}
Suppose $A$ is a Lie algebroid (with bracket $[\cdot,\cdot]$ and anchor $\mbf{a}$),  $\la\cdot,\cdot\ra$ is a metric on the fibres of $A$, and $\nabla$ is a metric connection on $A$. Then
$$\Cour{\sigma,\tau}:=[\sigma,\tau]+\mbf{a}^*\la \nabla\sigma,\tau\ra,\quad\sigma,\tau\in\Gamma(A),$$
 defines a Courant bracket on $A$ if and only if
\begin{itemize}
\item $\mbf{a}\circ\mbf{a}^*=0$ ($A$ acts with coisotropic stabilizers),
\item the `torsion' tensor \labelcref{eq:torsTens} is skew symmetric, and
\item $\mbf{a}^*\Psi=0$, (where $\Psi$  is defined  in terms of the curvature and torsion by \cref{eq:PsiMet}).
\end{itemize}
Indeed axiom (c3) for a Courant bracket (see \cref{def:CA}) holds since $\nabla$ is a metric connection, axiom (c2) holds if and only if the `torsion' tensor is skew symmetric, and axiom (c1) holds if and only if $\mbf{a}\circ\mbf{a}^*=0$ and $\mbf{a}^*\Psi=0$.

It is perhaps more natural to require that $\Psi=0$, in which case $(A,\nabla)$ is embedded as a pseudo-Dirac structure in the corresponding Courant algebroid.
\end{remark}

\begin{proposition}\label{prop:fltSec}
Suppose that $(W,\nabla)$ is a pseudo-Dirac structure  in the Courant algebroid $\mbb{E}$. If $\sigma,\tau\in\Gamma(W)$ satisfy $\nabla\sigma=\nabla\tau=0$  then 
\begin{itemize}
\item $\nabla\Cour{\sigma,\tau}=0$, and
\item $\Cour{\sigma,\tau}=[\sigma,\tau]$.
\end{itemize} That is, the `flat' sections of $W$ form a Lie algebra with respect to the Courant bracket.
\end{proposition}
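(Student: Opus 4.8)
The plan is to dispose of the second bullet at once and then reduce the first to a single application of the integrability tensor $\Psi$. For the second bullet, observe that $\nabla\sigma=0$ forces $\la\nabla\sigma,\tau\ra=0$, so $\mbf{a}^*\la\nabla\sigma,\tau\ra=0$ and hence $\Cour{\sigma,\tau}=[\sigma,\tau]$ directly from \cref{eq:pcModBrk}. In particular $\Cour{\sigma,\tau}=[\sigma,\tau]\in\Gamma(W)$, since $(W,\nabla)$ is a pseudo-Dirac structure and the modified bracket takes values in $\Gamma(W)$. Thus the first bullet is equivalent to the assertion $\nabla[\sigma,\tau]=0$.

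Since $\nabla[\sigma,\tau]\in\Omega^1(M,W^*)$, it suffices to show that $\la\nabla[\sigma,\tau],\upsilon\ra=0$ for every $\upsilon\in\Gamma(W)$, as pairing with all sections detects the $W^*$-component pointwise. To get hold of $\la\nabla[\sigma,\tau],\upsilon\ra$ I would apply the metric compatibility axiom \cref{eq:NabPairDer} to $[\sigma,\tau]$ and $\upsilon$, yielding
$$\la\nabla[\sigma,\tau],\upsilon\ra=\d\la[\sigma,\tau],\upsilon\ra-\la[\sigma,\tau],\nabla\upsilon\ra.$$
The task then becomes showing that the right-hand side vanishes, and for this I would feed the flatness of $\sigma$ and $\tau$ into the vanishing of $\Psi(\sigma,\tau,\upsilon)$.

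Evaluating \cref{eq:Psi_L} with $\nabla\sigma=\nabla\tau=0$ annihilates most terms: the bracket terms $\la[\upsilon,\sigma],\nabla\tau\ra$ and $\la[\tau,\upsilon],\nabla\sigma\ra$ vanish, as do $\iota_{\mbf{a}(\sigma)}\d\la\nabla\tau,\upsilon\ra$ and $\iota_{\mbf{a}(\upsilon)}\d\la\nabla\sigma,\tau\ra$. The torsion simplifies using \cref{prop:TorsTens}: since $\nabla_{\mbf{a}(\sigma)}\tau=\nabla_{\mbf{a}(\tau)}\sigma=0$, \cref{eq:torsTens} gives $T(\sigma,\tau,\upsilon)=-\la[\sigma,\tau],\upsilon\ra$, so $\d T(\sigma,\tau,\upsilon)=-\d\la[\sigma,\tau],\upsilon\ra$. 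The only remaining term of the middle line is $\iota_{\mbf{a}(\tau)}\d\la\nabla\upsilon,\sigma\ra$; here I would invoke \cref{eq:NabPairDer} once more, which with $\nabla\sigma=0$ gives $\la\nabla\upsilon,\sigma\ra=\d\la\upsilon,\sigma\ra$, an exact and hence closed $1$-form, so $\d\la\nabla\upsilon,\sigma\ra=0$ and this term drops out. Therefore $\Psi(\sigma,\tau,\upsilon)=0$ collapses to
$$0=\la[\sigma,\tau],\nabla\upsilon\ra-\d\la[\sigma,\tau],\upsilon\ra,$$
whose right-hand side is precisely $-\la\nabla[\sigma,\tau],\upsilon\ra$ by the displayed metric-compatibility identity above. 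As $\upsilon$ was arbitrary, this gives $\nabla[\sigma,\tau]=0$, completing the first bullet; together with the second it shows the flat sections are closed under $\Cour{\cdot,\cdot}=[\cdot,\cdot]$ and so form a Lie algebra.

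I do not anticipate any genuine obstacle beyond careful bookkeeping: the argument is driven entirely by three facts established earlier — the vanishing of $\Psi$, the metric compatibility \cref{eq:NabPairDer}, and $\d^2=0$ (used to kill $\d\la\nabla\upsilon,\sigma\ra$). The single point deserving a moment's care is the evaluation of the torsion term, which requires that $T$ be the genuinely tensorial object of \cref{prop:TorsTens} so that $T(\sigma,\tau,\upsilon)$ may be computed termwise from the flatness of $\sigma$ and $\tau$.
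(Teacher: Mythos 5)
Your proof is correct, but it takes a genuinely different route from the paper's. The paper proves the first bullet by passing to the $\mc{VB}$-Dirac structure $L\subset T\mbb{E}$: by \cref{eq:NabFormFromK}, a section $\sigma$ is flat precisely when its tangent lift $\sigma_T$ lies in $\Gamma(L,TM)$, so involutivity of $L$ together with $\Cour{\sigma_T,\tau_T}=\Cour{\sigma,\tau}_T$ immediately gives $\nabla\Cour{\sigma,\tau}=0$ --- a three-line argument once the correspondence of \cref{thm:LieSubIsVBDir} is in place. You instead work entirely at the level of the axioms in \cref{def:LieSubalg}: you use the fact that the modified bracket lands in $\Gamma(W)$ (needed so that $\nabla[\sigma,\tau]$ is even defined --- a point you rightly make explicit), metric compatibility \cref{eq:NabPairDer} to convert $\la\nabla[\sigma,\tau],\upsilon\ra$ into $\d\la[\sigma,\tau],\upsilon\ra-\la[\sigma,\tau],\nabla\upsilon\ra$, and then the vanishing of $\Psi$ with $\d^2=0$ killing the term $\iota_{\mbf{a}(\tau)}\d\la\nabla\upsilon,\sigma\ra$. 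I checked the term-by-term evaluation of \cref{eq:Psi_L} and the simplification $T(\sigma,\tau,\upsilon)=-\la[\sigma,\tau],\upsilon\ra$; both are right, and pairing against all $\upsilon\in\Gamma(W)$ does determine $\nabla[\sigma,\tau]$ pointwise. What your version buys is independence from the tangent-prolongation machinery of \cref{sec:LieSubAlg} --- it is a self-contained verification from the definition of a pseudo-Dirac structure --- at the cost of a longer computation; the paper's version buys brevity and makes transparent \emph{why} the statement is true (flat sections are exactly the tangent lifts lying in the Dirac structure $L$). One small remark: tensoriality of $T$ (\cref{prop:TorsTens}) is not actually needed for your evaluation, since you are substituting directly into the defining formula \labelcref{eq:torsTens} rather than evaluating a tensor on pointwise data.
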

\begin{proof}
First, if $\nabla\sigma=0$, then $[\sigma,\tau]:=\Cour{\sigma,\tau}-\mbf{a}^*\la\nabla\sigma,\tau\ra=\Cour{\sigma,\tau}$.

Next, let $L\subset T\mbb{E}$ be the $\mc{VB}$-Dirac structure corresponding to $(W,\nabla)$. By \cref{eq:NabFormFromK}, we see that $\nabla\sigma=0$ if and only if $\sigma_T\in\Gamma(L,TM)$. Therefore, $\sigma_T,\tau_T\in\Gamma(L,TM)$, and thus  $\Cour{\sigma_T,\tau_T}=\Cour{\sigma,\tau}_T\in \Gamma(L,TM)$. In turn, this shows that $\nabla\Cour{\sigma,\tau}=0$.
\end{proof}

\section{Forward and backward images of  pseudo-Dirac structures}\label{sec:FBimage}

The properties of Dirac structures which allow you to compose them with Courant relations (defined below) extend to pseudo-Dirac structures, as we shall explain in this section. Indeed, morally, 
any procedure for Dirac structures carries over to the more general pseudo-Dirac structures, since the latter \emph{are in fact just Dirac structures} in the tangent prolongation of the Courant algebroid.
Special cases of composing with Courant relations include forward and backward Dirac maps, or restricting a Dirac structure to a submanifold. 

\subsubsection{Composition of Courant relations with pseudo-Dirac structures}

Suppose that $(W,\nabla)$ is a pseudo-Dirac structure in the Courant algebroid $\mbb{E}$, and $R:\mbb{E}\dasharrow \mbb{F}$ is a Courant relation with support on $S:M\dasharrow N$. Let $L\subset T\mbb{E}$ be the $\mc{VB}$-Dirac structure corresponding to $(W,\nabla)$ (c.f. \cref{thm:LieSubIsVBDir}). From \cref{ex:DirStrTngLf} we see that $TR:T\mbb{E}\dasharrow T\mbb{F}$ is a Courant relation with support on $TS:TM\dasharrow TN$. 
\begin{definition}
We say that $R:\mbb{E}\dasharrow \mbb{F}$ \emph{composes cleanly} with the pseudo-Dirac structure $(W,\nabla)$ if $TR:T\mbb{E}\dasharrow T\mbb{F}$ composes cleanly with $L$.
\end{definition}
Assume that the composition $TR\circ L$ is clean, and equal to a subbundle $L'\subseteq T\mbb{F}$ supported on all of $TN$. Then the Proposition above shows that $L'$ is a $\mc{VB}$-Dirac structure, which in turn corresponds to a pseudo-Dirac structure $(W',\nabla')$ in $\mbb{F}$ (c.f. \cref{thm:LieSubIsVBDir}). In this case, we write $$(W',\nabla')=R\circ(W,\nabla),$$ and call it the \emph{forward image} along $R:\mbb{E}\dasharrow \mbb{F}$.

Conversely, suppose we instead start with a pseudo-Dirac structure $(W',\nabla')$ in $\mbb{E}$ corresponding to a $\mc{VB}$-Dirac structure $L'\subseteq T\mbb{F}$ (c.f. \cref{thm:LieSubIsVBDir}). 
\begin{definition}
We say that $R:\mbb{E}\dasharrow \mbb{F}$ \emph{composes cleanly} with the pseudo-Dirac structure $(W',\nabla')$ if $TR:T\mbb{E}\dasharrow T\mbb{F}$ composes cleanly with $L'$.
\end{definition}
Assume that the composition $L'\circ TR$ is clean, and equal to a subbundle $L\subseteq T\mbb{E}$ supported on all of $TM$. Then the Proposition above shows that $L$ is a $\mc{VB}$-Dirac structure, which in turn corresponds to a pseudo-Dirac structure $(W,\nabla)$ in $\mbb{E}$ (c.f. \cref{thm:LieSubIsVBDir}). In this case, we write $$(W,\nabla)=(W',\nabla')\circ R,$$ and call it the \emph{backward image} along $R:\mbb{E}\dasharrow \mbb{F}$.

The following proposition describes a useful equation satisfied by the pseudo-connections associated  to forward and backward images of a pseudo-Dirac structure along a Courant relation.

\begin{proposition}
Suppose $(W,\nabla)$ and $(W',\nabla')$ are pseudo-Dirac structures in the Courant algebroids $\mbb{E}\to M$ and $\mbb{F}\to N$, respectively.
 Let $L\subseteq T\mbb{E}$ and $L'\subseteq T\mbb{F}$ be the respective $\mc{VB}$-Dirac structures corresponding to $(W,\nabla)$ and $(W',\nabla')$ (c.f. \cref{thm:LieSubIsVBDir}).
Suppose $R:\mbb{E}\dasharrow\mbb{F}$ is a Courant relation over $S:M\dasharrow N,$ and that the intersection $$(L'\times L)\cap TR$$ is clean.
Then 
for any section $(\sigma',\sigma)\in\Gamma(W'\times W)$ satisfying 
 $$(\sigma',\sigma)\rvert_S\in\Gamma\big((W'\times W)\cap R\big),$$ and any  element $$\big(X;(\lambda',\lambda)\big)\in TS\times_M \big((W'\times W)\cap R\big),$$ the following equation holds:
\begin{equation}\label{eq:nablaComposition}\la (p_M^*\nabla')_{X}\sigma',\lambda'\ra=\la (p_N^*\nabla)_{X}\sigma,\lambda\ra.\end{equation} Here $p_N^*\nabla$ and $p_M^*\nabla'$ are the pull-backs of the respective pseudo-connections along the natural projections $p_N:N\times M\to N$ and $p_M:N\times M\to M$ (c.f. \cref{def:pseuCon}).

\end{proposition}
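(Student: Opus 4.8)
The plan is to reduce the identity to the single statement that the fibre metric of the tangent prolongation $T\mbb{E}$ pairs $TR$-related elements equally, exploiting the description $\nabla\sigma = q_L\circ\sigma_T$ of the pseudo-connection from \cref{prop:clasLagTDSB} (see \eqref{eq:NabFormFromK}). First I would unwind the definition of the pull-back pseudo-connection (\cref{def:pseuCon}): writing $X' = \d p_N(X)\in TN$ and $X_M = \d p_M(X)\in TM$ for the two components of $X\in T_sS$, the asserted equation \eqref{eq:nablaComposition} is equivalent to $\la\nabla'_{X'}\sigma',\lambda'\ra = \la\nabla_{X_M}\sigma,\lambda\ra$, since each pull-back connection merely evaluates the original connection along the corresponding component of $X$.

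The next step is to re-express each side as a pairing in the tangent prolongation. Since $\nabla\sigma = q_L\circ\sigma_T$, the pointwise reading of the pairing identity \eqref{eq:restPair} of \cref{lem:restPair} gives, for any $\eta\in L$ lying over $X_M$ with $q_{T\mbb{E}/\mbb{E}}(\eta) = \lambda$, the equality $\la\nabla_{X_M}\sigma,\lambda\ra = \la\sigma_T(X_M),\eta\ra_{T\mbb{E}}$; here the term $\la q_{T\mbb{E}/\mbb{E}}(\sigma_T(X_M)),q_L(\eta)\ra$ drops out because $\eta\in L = \ker q_L$. This value is independent of the choice of such $\eta$, as two choices differ by a core element of $W^\perp$, which pairs trivially with $\sigma_T(X_M)$ by \eqref{eq:TEPair}. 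The analogous identity $\la\nabla'_{X'}\sigma',\lambda'\ra = \la\sigma'_T(X'),\eta'\ra_{T\mbb{F}}$ holds on the $\mbb{F}$-side for any $\eta'\in L'$ over $X'$ with $q_{T\mbb{F}/\mbb{F}}(\eta') = \lambda'$.

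It then remains to produce a \emph{single} $TR$-related pair of such lifts and to invoke isotropy. From the hypothesis $\sigma\sim_R\sigma'$, applying the tangent functor (as in \cref{ex:DirStrTngLf}) yields $\sigma_T\sim_{TR}\sigma'_T$, so evaluating at $X = (X',X_M)\in TS$ gives $(\sigma'_T(X'),\sigma_T(X_M))\in TR$. For the lifts $\eta,\eta'$ I would use cleanness: the clean intersection $D := (L'\times L)\cap TR$ is a double vector subbundle of $T\mbb{F}\times T\mbb{E}$ with base $TS$, side bundle $(W'\times W)\cap R$, and core $(W'^\perp\times W^\perp)\cap R$, whence the canonical map $D\to TS\times_S\big((W'\times W)\cap R\big)$ is surjective (\cref{ex:DirectSumDVB}). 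Thus the given datum $\big(X;(\lambda',\lambda)\big)$ lifts to some $(\eta',\eta)\in D$, i.e. $\eta'\in L'$, $\eta\in L$, $(\eta',\eta)\in TR$, with $q_{T\mbb{F}/\mbb{F}}(\eta') = \lambda'$ and $q_{T\mbb{E}/\mbb{E}}(\eta) = \lambda$. Since $TR$ is a Dirac structure in $T\mbb{F}\times\overline{T\mbb{E}}$, hence isotropic, the two $TR$-related pairs $(\sigma'_T(X'),\sigma_T(X_M))$ and $(\eta',\eta)$ satisfy $\la\sigma'_T(X'),\eta'\ra_{T\mbb{F}} = \la\sigma_T(X_M),\eta\ra_{T\mbb{E}}$; combining this with the two reductions of the previous paragraph gives $\la\nabla'_{X'}\sigma',\lambda'\ra = \la\nabla_{X_M}\sigma,\lambda\ra$, as desired.

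The main obstacle is the structural claim used in the last step: that cleanness of $(L'\times L)\cap TR$ forces $D$ to be a double vector subbundle whose base and side bundles are exactly $TS$ and $(W'\times W)\cap R$. This is precisely what guarantees that the lift $(\eta',\eta)$ exists with the prescribed projections, and it should be verified through the graph-of-addition characterization of \cref{def:DLACAVB}, noting that the graphs of addition for $D$ are the intersections of those of $L'\times L$ and of $TR$, which are subbundles by the cleanness assumption. The remaining ingredients — tangent-functoriality of $\sim_R$, the pointwise interpretation of \eqref{eq:restPair}, and the isotropy of $TR$ — are routine once this structural fact is established.
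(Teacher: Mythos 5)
Your argument is correct and is essentially the paper's own proof: both hinge on the identification $\nabla\sigma=q_L\circ\sigma_T$, the existence (via cleanness) of a lift $(\eta',\eta)\in(L'\times L)\cap TR$ over $\big(X;(\lambda',\lambda)\big)$, the pairing identity \labelcref{eq:restPair}, and the Lagrangian property of $TR$ paired against $(\sigma'_T,\sigma_T)\rvert_{TS}$. The only difference is organizational --- the paper runs the computation once in $\mbb{F}\times\overline{\mbb{E}}$ through the single quotient map $q_{L'\times L}$, whereas you work factor-by-factor with $q_{L'}$ and $q_L$ and then invoke isotropy of $TR$ to equate the two sides, which amounts to the same calculation.
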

\begin{proof}
%
%
%

%
%
 Let $$q_{L'\times L}:T(\mbb{F}\times\overline{\mbb{E}})\rvert_{W'\times W}\to (W'\times W)^*$$ be the double quotient map \cref{eq:DVBtotLagQuo} corresponding to the Lagrangian double vector subbundle $L'\times L\subseteq T(\mbb{F}\times\overline{\mbb{E}})$.
Now the $\mc{VB}$-Dirac structure $L'\times L$ corresponds to the pseudo-Dirac structure $\big(W'\times W,(-\nabla')\oplus\nabla\big)$ (c.f. \cref{thm:LieSubIsVBDir,ex:BarLieSub}). Thus, by \cref{eq:NabFormFromK}, for any section $(\sigma',\sigma)\in\Gamma(W'\times W)$ and any $X\in T(N\times M)$, we have \begin{equation}\label{eq:qLieSubCom}q_{L'\times L}(\sigma'_T,\sigma_T)\rvert_{X}=\big((-\nabla')\oplus\nabla\big)_{X}(\sigma',\sigma)=\big(-(p_N^*\nabla')_{X}\sigma',(p_M^*\nabla)_{X}\sigma\big).\end{equation}

Next, by the clean intersection assumption, $$\begin{tikzpicture} \mmat{m2}{\big(L'\times L\big)\cap TR &(W'\times W)\cap R\\ TS&S\\};
\path[->]
	(m2-1-1) edge  (m2-1-2)
		edge (m2-2-1);
\path[<-] 
	(m2-2-2) edge  (m2-1-2)
		edge (m2-2-1);
\end{tikzpicture}$$ is a double vector subbundle of $T(\mbb{F}\times\overline{\mbb{E}})$. Thus for any $$\big(X;(\lambda',\lambda)\big)\in TS\times_M \big((W'\times W)\cap R\big),$$ there exists a pair $(\tilde\lambda',\tilde\lambda)\in (L'\times L)\cap TR$ which is simultaneously mapped to both $X$ and $(\lambda',\lambda)$ by the respective bundle maps, as pictured in the following diagram:
$$\begin{tikzpicture}
\mmat{m1} at (-3,0) {(\tilde\lambda',\tilde\lambda) &(\lambda',\lambda)\\ X&\\};
\path[->]
	(m1-1-1) edge  (m1-1-2)
		edge (m1-2-1);

\draw (0,0) node {$\in$};

\mmat{m2} at (4,0) {\big(L'\times L\big)\cap TR &(W'\times W)\cap R\\ TS&S\\};
\path[->]
	(m2-1-1) edge  (m2-1-2)
		edge (m2-2-1);
\path[<-] 
	(m2-2-2) edge  (m2-1-2)
		edge (m2-2-1);
\end{tikzpicture}$$
Additionally, for any $(\sigma',\sigma)\in\Gamma(W'\times W)$ satisfying $(\sigma',\sigma)\rvert_S\in\Gamma\big((W'\times W)\cap R\big)$,  $$(\sigma'_T,\sigma_T)\rvert_{TS}\in\Gamma(TR,TS).$$ Consequently, since $TR$ is Lagrangian, \begin{equation}\label{eq:LieSubCompZero}\la(\sigma'_T,\sigma_T),(\tilde\lambda',\tilde\lambda)\ra=0.\end{equation}

 Since $(\tilde\lambda',\tilde\lambda)\in (L'\times L)$, we have $q_{L'\times L}(\tilde\lambda',\tilde\lambda)=0$. Hence, \cref{eq:LieSubCompZero,eq:restPair} imply that 
$$\la q_{L'\times L}(\sigma'_T,\sigma_T),(\lambda',\lambda)\ra=0.$$
 
 Combining this equality with \cref{eq:qLieSubCom} yields
 $$\la\big(-(p_N^*\nabla')_{X}\sigma',(p_M^*\nabla)_{X}\sigma\big), (\lambda',\lambda)\ra=0,$$
 which proves the proposition.
\end{proof}

Now we specialize to the case where $R:\mbb{E}\dasharrow \mbb{F}$ is a Courant morphism supported on the graph of a map $\phi:M\to N$. Suppose that $(W',\nabla')$ is a pseudo-Dirac structure in $\mbb{F}$ which composes cleanly with $R:\mbb{E}\dasharrow\mbb{F}$, and $\on{ran}(R)+ \phi^*W'^\perp=\phi^*\mbb{F}$. 

Since $(W',\nabla')$ composes cleanly with $R$, the intersection $\phi^*W'\cap \on{ran}(R)\subseteq \phi^*\mbb{F}$ is a smooth subbundle. Since $R$ is supported on the graph of a map $\phi:M\to N$, the subbundle
$$W:=W'\circ R\subseteq \mbb{E}$$ is well defined with support on all of $M$.
\begin{lemma}\label{lem:PullBackPseu}
There is a bundle map $\Psi:W\to \phi^* W'$, uniquely determined by the equation $$(\Psi(\lambda),\lambda)\in R_{(\phi(x),x)},$$ for every $\lambda\in W_x.$

Moreover, \cref{eq:nablaComposition} specializes to 
$$\nabla=\Psi^*\circ(\phi^*\nabla')\circ\Psi.$$ 
\end{lemma}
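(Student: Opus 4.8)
The plan is to establish the two assertions in turn: first the existence, uniqueness, linearity and smoothness of the bundle map $\Psi$, and then the connection identity, which I will extract from \eqref{eq:nablaComposition} by specializing to the graph of $\phi$.

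For the first assertion, existence of $\Psi(\lambda)$ for $\lambda\in W_x$ is immediate from the definition $W=W'\circ R$: some $\mu\in W'_{\phi(x)}$ with $(\mu,\lambda)\in R_{(\phi(x),x)}$ exists. The content is uniqueness, and this is exactly where the hypothesis $\on{ran}(R)+\phi^*W'^\perp=\phi^*\mbb{F}$ enters. If $(\mu_1,\lambda)$ and $(\mu_2,\lambda)$ both lie in $R$ with $\mu_1,\mu_2\in\phi^*W'$, then $\nu:=\mu_1-\mu_2$ satisfies $(\nu,0)\in R$ and $\nu\in\phi^*W'$. Since $R$ is isotropic in $\mbb{F}\times\overline{\mbb{E}}$, pairing $(\nu,0)$ against an arbitrary $(\mu,\lambda'')\in R$ gives $\la\nu,\mu\ra_{\mbb{F}}=0$, so $\nu\in\on{ran}(R)^\perp$. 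Taking orthogonal complements of the hypothesis yields $\on{ran}(R)^\perp\cap\phi^*W'=(\phi^*\mbb{F})^\perp=0$, whence $\nu=0$. Linearity of $\Psi$ then follows from linearity of $R$ together with this uniqueness, and smoothness follows from the clean composition assumption, which guarantees that $W$ and $\gr(\Psi)=(\phi^*W'\times W)\cap R$ are smooth subbundles. This proves the first statement.

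Before turning to the connection formula I record the key property that $\Psi$ is a metric isometry onto its image: since $(\Psi\sigma,\sigma)$ and $(\Psi\tau,\tau)$ lie in the isotropic subbundle $R$, expanding $\la(\Psi\sigma,\sigma),(\Psi\tau,\tau)\ra=0$ in $\mbb{F}\times\overline{\mbb{E}}$ gives $\la\Psi\sigma,\Psi\tau\ra_{\mbb{F}}=\la\sigma,\tau\ra_{\mbb{E}}$. Using this, both $\nabla$ and $D:=\Psi^*\circ(\phi^*\nabla')\circ\Psi$ obey the \emph{same} Leibniz rule \eqref{eq:NabForLLeib}: for $D$ one computes $D(f\sigma)=fD\sigma+\d\!f\otimes\la\Psi\sigma,\Psi(\cdot)\ra=fD\sigma+\d\!f\otimes\la\sigma,\cdot\ra$, the last step being the isometry. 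Consequently $\nabla-D$ is $C^\infty(M)$-linear, i.e. a tensor in $\Gamma(T^*M\otimes\on{Hom}(W,W^*))$, so it suffices to verify $\nabla=D$ on a local generating set of $\Gamma(W)$. On such sections I specialize \eqref{eq:nablaComposition} to $S=\gr(\phi)$: identifying $T\gr(\phi)\cong TM$ via $X\mapsto(\d\phi(X),X)$ and using $\d p_M(\d\phi(X),X)=X$ and $\d p_N(\d\phi(X),X)=\d\phi(X)$, the equation reads $\la\nabla'_{\d\phi(X)}\sigma',\Psi\tau\ra=\la\nabla_X\sigma,\tau\ra$ for any $\sigma\in\Gamma(W)$, any $\sigma'\in\Gamma(W')$ with $\phi^*\sigma'=\Psi\sigma$, and any value $\tau(x)\in W_x$. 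By the defining relation of the pull-back pseudo-connection (\cref{def:pseuCon}), $\la\nabla'_{\d\phi(X)}\sigma',\Psi\tau\ra=\la(\phi^*\nabla')_X(\Psi\sigma),\Psi\tau\ra=\la\Psi^*(\phi^*\nabla')_X\Psi\sigma,\tau\ra$, and since $\tau(x)$ is arbitrary this gives $\nabla_X\sigma=\Psi^*(\phi^*\nabla')_X\Psi\sigma$, i.e. $\nabla=\Psi^*\circ(\phi^*\nabla')\circ\Psi$.

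The main obstacle is precisely the requirement "$\phi^*\sigma'=\Psi\sigma$ for some $\sigma'\in\Gamma(W')$" needed to invoke \eqref{eq:nablaComposition}: in general $\Psi\sigma$ need not be the pull-back of a section of $W'$ over $N$, since $\on{ran}(\Psi)=\phi^*W'\cap\on{ran}(R)$ is not itself pulled back from $N$. I would resolve this by working locally and exploiting the tensoriality established above: expand $\Psi\sigma=\sum_i g_i\,\phi^*\sigma_i'$ in a local frame of pull-back sections, observe that the Leibniz corrections $(\d g_i)\,\la\phi^*\sigma_i',\Psi\tau\ra$ agree on both sides by the isometry, and thereby reduce the identity to the basic sections $\phi^*\sigma_i'$, to which \eqref{eq:nablaComposition} applies directly. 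Checking that this local bookkeeping is consistent — equivalently, that clean composition supplies enough compatible sections along $\gr(\phi)$ — is the one genuinely technical point, but it does not require any new input beyond the smoothness of $W$ and $\gr(\Psi)$.
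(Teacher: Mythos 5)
Your treatment of the first claim is the paper's argument in dual form: the paper computes $\bigl(R\cap(\phi^*W'\times 0)\bigr)^\perp=(\on{ran}(R)+\phi^*W'^{\perp})\times\overline{\mbb{E}}=\mbb{F}\times\overline{\mbb{E}}$ to kill the kernel of the projection $R\cap(\phi^*W'\times W)\to W$, and your pairing of $(\nu,0)$ against elements of $R$ followed by $\on{ran}(R)^\perp\cap\phi^*W'=0$ is the same computation read off elementwise; this part is correct. For the second claim the paper says only that it is ``an immediate consequence of the first claim,'' i.e.\ it takes for granted exactly the specialization you carry out ($S=\gr(\phi)$, $(\lambda',\lambda)=(\Psi\lambda,\lambda)$, and pairs $(\sigma',\sigma)$ with $\phi^*\sigma'=\Psi\sigma$), so your isometry observation $\la\Psi\sigma,\Psi\tau\ra=\la\sigma,\tau\ra$ and the resulting tensoriality of $\nabla-\Psi^*\circ(\phi^*\nabla')\circ\Psi$ are a genuine refinement of what the paper records. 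The one step that does not work as written is your final reduction ``to the basic sections $\phi^*\sigma_i'$'': those sections form a frame of $\phi^*W'$, but $\on{ran}(\Psi)=\phi^*W'\cap\on{ran}(R)$ may be a proper subbundle of $\phi^*W'$, so in general there is no $\sigma_i\in\Gamma(W)$ with $\Psi\sigma_i=\phi^*\sigma_i'$, and \cref{eq:nablaComposition} says nothing about $\phi^*\sigma_i'$ on its own. Tensoriality instead reduces you to exhibiting, through each $w\in W_x$, a single section $\sigma\in\Gamma(W)$ admitting a partner $\sigma'\in\Gamma(W')$ with $(\sigma',\sigma)\rvert_{\gr(\phi)}\in\Gamma\bigl((W'\times W)\cap R\bigr)$ near $x$ --- precisely the ``genuinely technical point'' you flag in your last sentence, and also the point that the paper's one-line proof silently assumes.
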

\begin{proof}
The proof of the first claim is entirely analogous to that of \cite[Lemma 7.2]{Bursztyn03-1} for Dirac realizations.
Consider the relation $R':W\dasharrow W'$, supported on the graph of $\phi:M\to N$, defined by
$$R':=R\cap (\phi^* W'\times W).$$ We will show that $R'$ is the graph of a bundle map $\Psi:W\to \phi^*W'$, i.e.  the natural projection $R'\to W$ is an isomorphism.

First, we establish surjectivity: let $\lambda\in W_x$. Since $W:=W'\circ R$, there exists some $\lambda'\in W'_{\phi(x)}$ such that the pair $(\lambda',\lambda)\in R_{(\phi(x),x)}$.

Next we establish injectivity. Suppose there exists $\lambda''\in W'_{\phi(x)}$ such that the pair $(\lambda'',\lambda)\in R_{(\phi(x),x)}$. Then $(\lambda''-\lambda',0)\in R\cap (\phi^*W'\times 0)$. However, 
$$\big(R\cap (\phi^*W'\times 0)\big)^\perp=(\on{ran}(R)+ \phi^*W'^\perp)\times \overline{\mbb{E}}=\mbb{F}\times\overline{\mbb{E}}.$$
Hence $R\cap (\phi^*W'\times 0)=0$ and $\lambda''=\lambda'$.

The second claim is an immediate consequence of the first claim.
\end{proof}

%


Let $L\subseteq T\mbb{E}$ and $L'\subseteq T\mbb{F}$ be the $\mc{VB}$-Dirac structures corresponding to the pseudo-Dirac structures $(W,\nabla)$ and $(W',\nabla')$. As an intersection of involutive subbundles, it is clear that $$TR\cap (L'\times L)=\gr(T\Psi)$$ is a Lie subalgebroid of $L'\times L$. In particular, the intersection $\gr(\Psi)=\gr(T\Psi)\cap(W'\times W)$ is a Lie subalgebroid of $W'\times W$. Thus we have shown:
\begin{corollary}\label{cor:PsiMorpLieAlg}
$$\Psi:W\to W'$$ is a morphism of Lie algebroids.
\end{corollary}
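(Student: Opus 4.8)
The plan is to reduce the statement to the definition of a Lie algebroid morphism (\cref{def:MorphLA}) and then extract it from the tangent-prolongation picture that has just been set up. By \cref{def:MorphLA}, since $\Psi$ is a bundle map over $\phi$, it suffices to show that its graph $\gr(\Psi)\subseteq W'\times W$ is a Lie subalgebroid, where $W'\times W$ carries the product Lie algebroid structure. By \cref{lem:PullBackPseu} this graph is exactly $R\cap(W'\times W)$, so the entire problem becomes an involutivity statement about $R$ intersected with $W'\times W$.

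The key device is to lift everything to the Courant algebroid $T(\mbb{F}\times\overline{\mbb{E}})$. There the product $L'\times L$ is the $\mc{VB}$-Dirac structure corresponding, via \cref{thm:LieSubIsVBDir} and \cref{ex:BarLieSub}, to the product pseudo-Dirac structure on $W'\times W$, while $TR$ is a $\mc{VB}$-Dirac structure by \cref{ex:DirStrTngLf}, being the tangent lift of the Dirac structure $R$. Both are therefore involutive Lagrangian subbundles of $T(\mbb{F}\times\overline{\mbb{E}})$, and by \cref{prop:VBDirIsLaVB} each is simultaneously an $\mc{LA}$-vector bundle whose side bundles (along $\mbb{F}\times\overline{\mbb{E}}$) are $W'\times W$ and $R$, respectively.

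Next I would show that $TR\cap(L'\times L)$ is a Lie subalgebroid of $L'\times L$. Cleanness of the composition guarantees that the intersection is a smooth double vector subbundle; involutivity in the sense of \cref{def:SubLA} is then automatic, because the Lie algebroid bracket on the Dirac structure $L'\times L$ is the restriction of the Courant bracket, and if two sections of $L'\times L$ restrict along $TS$ to sections of the involutive subbundle $TR$, then by involutivity their Courant bracket restricts to $TR$ as well, while lying in $L'\times L$ since $L'\times L$ is involutive. The side bundle of this intersection is $R\cap(W'\times W)=\gr(\Psi)$.

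Finally, I would pass from the $\mc{LA}$-vector bundle $L'\times L$ to its side bundle $W'\times W$. As a double vector subbundle that is also a Lie subalgebroid, $TR\cap(L'\times L)$ is an $\mc{LA}$-vector subbundle, and by the functoriality built into \cref{prop:AisLA} its side bundle $\gr(\Psi)$ is a Lie subalgebroid of $W'\times W$, which is exactly what \cref{def:MorphLA} demands. I expect the main obstacle to be precisely this last descent: one must lift sections of $W'\times W$ restricting to $\gr(\Psi)$ to linear sections of $L'\times L$ restricting to $TR\cap(L'\times L)$, apply the involutivity established upstairs, and project back down using that $q_{(L'\times L)/(W'\times W)}$ is a Lie algebroid morphism (\cref{prop:AisLA}); the remaining verifications are formal.
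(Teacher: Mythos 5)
Your proof follows the paper's argument exactly: both lift to the tangent prolongation, observe that $TR\cap(L'\times L)=\gr(T\Psi)$ is a Lie subalgebroid of $L'\times L$ because it is an intersection of involutive subbundles, and then descend to the side bundles to conclude that $\gr(\Psi)$ is a Lie subalgebroid of $W'\times W$. The additional detail you supply for the final descent step is a correct elaboration of what the paper leaves implicit.
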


\subsection{Further Examples}

\begin{example}[q-Poisson structures and Cotangent Lie algebroids]\label{ex:qPCotLie}

Suppose that $\mf{d}$ is a quadratic Lie algebra, $\g\subset\mf{d}$ is a Lagrangian Lie subalgebra, and 
\begin{equation}\label{eq:qPstr}R:(\mbb{T}M,TM)\dasharrow (\mf{d},\g)\end{equation}
 is a morphism of Manin pairs \cite{Bursztyn:2009wi}.  That is,
 \begin{enumerate} 
\item[m1)] $TM\cap \on{ker}(R)=0$, 
\item[m2)] $R\circ TM\subseteq \g$. 
\end{enumerate}
Axioms (m1) and (m2) imply that for any $\xi\in\g$, there is a unique vector field $\rho(\xi)\in\mf{X}(M)$ such that
 $$(\rho(\xi),0)\sim_R \xi,\quad \xi\in\g.$$
Moreover, as explained in \cite{Bursztyn:2009wi}, $\rho:\g\times M\to TM$ defines an action of $\g$ on $M$.

It was shown in \cite[\S~3.2]{Bursztyn:2009wi} that once a Lagrangian complement $\mf{p}\subset\mf{d}$ to $\g$ has been chosen, the morphism of Manin pairs \labelcref{eq:qPstr} defines a q-Poisson structure on $M$ in the sense of \cite{Alekseev99}, for the Manin quasi-triple $(\mf{d},\g,\mf{p})$.
 Moreover, \cite[Proposition~6.1]{Xu95}   shows that whenever $\mf{p}$ is a Lagrangian subalgebra, $\mf{p}\circ R\subseteq \mbb{T}M$ is a Dirac structure (in fact it is the graph of a Poisson structure on $M$, cf. \cref{ex:StdCourAlgPoisSymp}). Using pseudo-Dirac geometry, we can generalize this fact by relaxing the requirement that $\mf{p}$ be Lagrangian:

If $\h\subset\mf{d}$ is any subalgebra transverse to $\g$, then $\on{ran}(R)+\h^\perp=\mf{d}$, so $F=\h\circ R\subseteq \mbb{T}M$ is a subbundle transverse to $TM$. Moreover, \cref{lem:PullBackPseu} shows that $(F,\nabla)$ is a pseudo-Dirac structure, where
\begin{equation}\label{eq:qPpseCon}\nabla\sigma=\rho \d\rho^*\sigma,\quad\sigma\in\Gamma(F),\end{equation} and we have used the metric to identify $\h\cong \g^*$.
Thus, any choice of Lie subalgebra $\h\subset\mf{d}$ transverse to $\g$ endows $T^*M\cong F$ with the structure of a Lie algebroid so that both 
\begin{align*}
\rho^*:T^*M&\to \h,\\
\rho:\g\times M&\to TM
\end{align*}
are morphisms of Lie algebroids.


Note that in the special case where $\mf{k}$ is a quadratic Lie algebra, $\mf{d}=\mf{k}\oplus\overline{\mf{k}}$, $\g=\mf{k}_\Delta$ is the diagonal subalgebra, and $\h=0\oplus\overline{\mf{k}}$,
this result was already proven in \cite[Theorem~1]{LiBland:2010wi}.
\end{example}

\begin{example}[pseudo-Dirac structure on the moduli-space of flat connections]
Suppose that $\mf{k}$ is a quadratic Lie algebra, and $\Sigma$ is a oriented surface with boundary, with one marked point on each boundary component.
\begin{center}
\begingroup%
  \makeatletter%
  \providecommand\color[2][]{%
    \errmessage{(Inkscape) Color is used for the text in Inkscape, but the package 'color.sty' is not loaded}%
    \renewcommand\color[2][]{}%
  }%
  \providecommand\transparent[1]{%
    \errmessage{(Inkscape) Transparency is used (non-zero) for the text in Inkscape, but the package 'transparent.sty' is not loaded}%
    \renewcommand\transparent[1]{}%
  }%
  \providecommand\rotatebox[2]{#2}%
  \ifx\svgwidth\undefined%
    \setlength{\unitlength}{123.91615294bp}%
    \ifx\svgscale\undefined%
      \relax%
    \else%
      \setlength{\unitlength}{\unitlength * \real{\svgscale}}%
    \fi%
  \else%
    \setlength{\unitlength}{\svgwidth}%
  \fi%
  \global\let\svgwidth\undefined%
  \global\let\svgscale\undefined%
  \makeatother%
  \begin{picture}(1,0.37572951)%
    \put(0,0){\includegraphics[width=\unitlength]{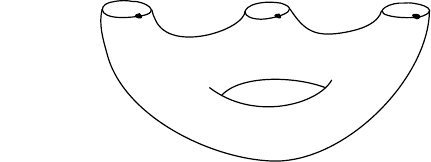}}%
    \put(-0.00535897,0.16277782){\color[rgb]{0,0,0}\makebox(0,0)[lb]{\smash{$\Sigma=$}}}%
  \end{picture}%
\endgroup%

\end{center}

Let $\mc{M}_{\Sigma,\mf{k}}$  denote the space of flat $\mf{k}$ connections on $\Sigma$ modulo gauge transformations which act as the identity at the marked points. Then, as explained in \cite{Alekseev97,Bursztyn:2009wi}, there is a morphism of Manin pairs
$$R:(\mbb{T}\mc{M}_{\Sigma,\mf{k}}, T\mc{M}_{\Sigma,\mf{k}})\dasharrow(\mf{k}\oplus\overline{\mf{k}},\mf{k}_\Delta).$$
Let $K$ be the simply-connected Lie group corresponding to $\mf{k}$. If $\Sigma$ has $n$ boundary components labelled by $i=1,\dots,n$, then we have a `moment map' $$\Phi:\mc{M}_{\Sigma,\mf{k}}\to K^n,$$
whose $i^{th}$ component is the holonomy of a given connection around the $i^{th}$ boundary \cite{Alekseev97}.

Thus, as explained in \cref{ex:qPCotLie}, the subbundle
$$W_{\Sigma,\mf{k}}:=(0\oplus\overline{\mf{k}})\circ R\subseteq \mbb{T}\mc{M}_{\Sigma,\mf{k}}$$
is a pseudo-Dirac structure, with pseudo-connection given by \cref{eq:qPpseCon}, where 
$$\rho:\mf{k}\times\mc{M}_{\Sigma,\mf{k}}\to T\mc{M}_{\Sigma,\mf{k}}$$ 
is the residual action of the gauge group at the marked points.

The theory of \cref{sec:FBimage} shows that one can restrict this pseudo-Dirac structure to submanifolds $\mc{N}\subseteq\mc{M}_{\Sigma,\mf{k}}$. Indeed, $$(W_{\Sigma,\mf{k}},\nabla)\circ R_{\gr(\phi)}$$ is a pseudo-Dirac structure in $\mbb{T}\mc{N}$, where $$R_{\gr(\phi)}:\mbb{T}\mc{N}\dasharrow\mbb{T}\mc{M}_{\Sigma,\mf{k}}$$ is the Courant morphism described in \cref{ex:StdDiracStr} and $\phi:\mc{N}\to\mc{M}_{\Sigma,\mf{k}}$ is the embedding. 

Suppose for example we wish to study the moduli space $\mc{M}_{\Sigma',\mf{k}}$, where $\Sigma'$ is the surface constructed from $\Sigma$ by sewing the $i^{th}$ boundary component along the $j^{th}$ (in an orientation preserving way). Let 
$$\mc{N}=\Phi^{-1}K^n_{\Delta_{ij}},$$ where $$K^n_{\Delta_{i,j}}:=\{(k_1,\cdots,k_n)\in K^n\mid k_i=k_j^{-1}\}.$$ That is, $\mc{N}$ is the subspace of $\mc{M}$, determined by requiring that the holonomy around the $i^{th}$ and $j^{th}$ boundary components coincide up to inversion. Acting diagonally by the residual gauge group at the $i^{th}$ and $j^{th}$ marked points gives a surjective submersion $q:\mc{N}\to \mc{M}_{\Sigma',\mf{k}}$.
What is interesting is that one may show the two pseudo-Dirac structures are related by
\begin{equation}\label{eq:ModSpceRed}(W_{\Sigma',\mf{k}},\nabla)=(W_{\Sigma,\mf{k}},\nabla)\circ R_{\gr(\phi)}\circ R_{\gr(q)}^\top.\end{equation}
Moreover, $$(W_{\Sigma,\mf{k}}\circ R_\phi)\cap T\mc{N}$$ is precisely the tangent space to the fibres of the submersion $q:\mc{N}\to \mc{M}_{\Sigma',\mf{k}}$.

One may interpret \cref{eq:ModSpceRed} as saying that the pseudo-Dirac structure on $\mc{M}_{\Sigma',\mf{k}}$ is the reduction of the pseudo-Dirac structure on $\mc{M}_{\Sigma,\mf{k}}$ at moment level $K^n_{\Delta_{ij}}$.

 Generalizing this, suppose $S\subseteq K^n$ is a submanifold. Let $\mc{N}=\Phi^{-1}(S)$, the map $\phi:\mc{N}\to\mc{M}$ denote the inclusion, and $$\mc{F}= (W_{\Sigma,\mf{k}}\circ R_\phi)\cap T\mc{N}.$$  Let $\mc{Q}$ denote the leaf space of $\mc{F}$, and $q:\mc{N}\to\mc{Q}$ the quotient map. Then (assuming everything exists and is  smooth etc.) we define 
$$(W_{\Sigma,\mf{k}},\nabla)\circ R_{\gr(\phi)}\circ R_{\gr(q)}^\top$$
to be the reduction of the pseudo-Dirac structure on $\mc{M}_{\Sigma,\mf{k}}$ at moment level $S\subseteq K^n$.

One may construct interesting moduli spaces by choosing different moment levels. For instance, one may require that the holonomy around the $i^{th}$ boundary component lie in a specific conjugacy class $C_i\subseteq K$, i.e. $S=C_1\times\cdots \times C_n$. Then the reduction at moment level $S$ is the symplectic structure on the moduli space of flat $\mf{k}$ connections on $\Sigma$ whose holonomies around the various boundary components lie in the specified conjugacy classes \cite{Alekseev97}.



We will generalize this example, and explain it in greater detail in a future paper.
\end{example}

\begin{example}[Transverse pseudo-Dirac structures]\label{ex:TransvPsDir}
Suppose that $\mbb{E}\to M$ is a Courant algebroid and $(E,\nabla^E)$ and $(F,\nabla^F)$ are two complementary pseudo-Dirac structures in $\mbb{E}$ (that is, $\mbb{E}=E\oplus F$ as a vector bundle). Then $(E\oplus F,\nabla^E\oplus(-\nabla^F))$ is a pseudo-Dirac structure in $\mbb{E}\times\overline{\mbb{E}}$ transverse to the diagonal $\mbb{E}_\Delta$. It follows that the Courant morphism
$$R_{\on{diag}}:\mbb{T}M\dasharrow \mbb{E}\times\overline{\mbb{E}}$$
described in \cref{ex:DiagMorph} composes cleanly with $(E\oplus F,\nabla^E\oplus(-\nabla^F))$. 
Therefore, $$(W,\nabla):=(E\oplus F,\nabla^E\oplus(-\nabla^F))\circ R_{\on{diag}}$$ is a pseudo-Dirac structure in $\mbb{T}M$. 

Suppose $\mu\in T^*M$, if
$$\mbf{a}^*\mu=x-y,\quad x\in E,\quad y\in F,$$
then $x$ is the image of $\mu$ under the map 
\begin{equation}\label{eq:TransMorph1}T^*M\xrightarrow{\mbf{a}^*}\mbb{E}\to \mbb{E}/F\cong E.\end{equation}
Since $$\mbb{E}=E\oplus F\quad\Leftrightarrow \quad\mbb{E}=E^\perp\oplus F^\perp,$$ we have $E^*\cong F^\perp$ and $F^*\cong E^\perp$. 
In particular, $E\cong\mbb{E}/F\cong (F^\perp)^*$. 
Therefore, the composition \cref{eq:TransMorph1} is dual to the map $\mbf{a}\rvert_{F^\perp}:F^\perp\to TM$.
Thus, $$\mbf{a}(x)=\mbf{a}\rvert_E\circ \mbf{a}\rvert_{F^\perp}^*(\mu).$$
Comparison with \cref{eq:DiagMorph2} shows that $$\big( \mbf{a}\rvert_E\circ \mbf{a}\rvert_{F^\perp}^*(\mu),\mu\big)\sim_{R_{\on{diag}}} (x,y),$$
where $x=\mbf{a}\rvert_{F^\perp}^*(\mu)\in E$ and $y=\mbf{a}\rvert_{E^\perp}^*(\mu)\in F$.

It follows that 
$$W=\gr\bigg((\mbf{a}\rvert_E)\circ (\mbf{a}\rvert_{F^\perp})^*:T^*M\to TM\bigg).$$
Or, $$W=\{(\mbf{a}(e_i)\la\mbf{a}(e^i),\mu\ra,\mu)\mid \mu\in T^*M\},$$
where $\{e_i\}$ is a local basis of sections for $E$ and $\{e^i\}$ is a local basis for $F^\perp$ in duality.

We may compute the pseudo-connection $\nabla$ in terms of $\nabla^E$ and $\nabla^F$ using \cref{lem:PullBackPseu}. In this context, the bundle map $\Psi:W\to E\times_M F$ is simply
$$\Psi:\mu\to (\mbf{a}\rvert_{F^\perp}^*\mu,\mbf{a}\rvert_{E^\perp}^*\mu),\quad \mu\in T^*M$$
where we have used the identifications $W\cong T^*M$, $E\cong (F^\perp)^*$ and $F\cong (E^\perp)^*$. 
Thus \cref{lem:PullBackPseu} implies that for any section $\alpha\in\Gamma(T^*M)\cong \Gamma(W)$, and $X\in \mf{X}(M)$,
$$\nabla_X\alpha=\mbf{a}_{F^\perp}\big(\nabla^E_X(\mbf{a}\rvert_{F^\perp}^*\alpha)\big)+\mbf{a}_{E^\perp}\big(\nabla^F_X(\mbf{a}\rvert_{E^\perp}^*\alpha)\big).$$

Moreover, \cref{cor:PsiMorpLieAlg} implies that 
$$\Psi:W\to E\times F$$ is a morphism of Lie algebroids over the diagonal embedding $M\to M\times M$. In particular, the leaves of $W$ must lie in the intersections between the leaves of $E$ with the leaves of $F$.

\end{example}

\begin{example}[Decompositions of action Courant algebroids]\label{ex:ActCourDec}
Suppose a quadratic Lie algebra, $\mf{d}$, acts on $M$ with coisotropic stablizers, so that
$$\mbb{E}:=\mf{d}\times M$$ 
is an action Courant algebroid \cite{LiBland:2009ul}. 
Suppose we have a vector space decomposition $\mf{d}=\mf{e}\oplus\mf{f}$, where $\mf{e},\mf{f}\subseteq\mf{d}$ are subalgebras. Note that we do not assume that $\mf{e},\mf{f}\subseteq\mf{d}$ are either ideals or Lagrangian subalgebras. In the literature, one says that $\mf{e}$ and $\mf{f}$ form a matched pair, and $\mf{d}=\mf{e}\bowtie\mf{f}$ is the double. 

\Cref{ex:LieSubAct} shows that
$(E,\d)$ and $(F,\d)$ are complementary pseudo-Dirac structures in $\mbb{E}$, where
$$E:=\mf{e}\times M,\quad F:=\mf{f}\times M,$$
and $\d$ is the standard flat connection on these trivial bundles.

Let $\rho:\mf{d}\times  M\to T M$ be the action map. Using the identification $ \mf{e}^*\cong \mf{f}^\perp$,  let $\{e_i\}$ and $\{e^i\}$ be bases for $\mf{e}$ and $\mf{f}^\perp$ in duality. Similarly, using the identification $ \mf{f}^*\cong \mf{e}^\perp$, let $\{f_i\}$ and $\{f^i\}$ be bases for $\mf{f}$ and $\mf{e}^\perp$ in duality.
 Let $$\tilde e_i:= \sum_j e^j\la e_j,e_i\ra$$
 denote the image of $e_i\in\mf{e}$ under the projection of $\mf{e}$ to $\mf{f}^\perp$ along $\mf{e}^\perp$. Similarly, let 
 $$\tilde f_i:= \sum_j f^j\la f_j,f_i\ra$$ 
 denote the image of $f_i\in\mf{f}$ under the projection of $\mf{f}$ to $\mf{e}^\perp$ along $\mf{f}^\perp$.
 Then \cref{ex:TransvPsDir} shows that the subbundle
$$W:=\big\{\big(\sum_i \rho(e_i)\mu(\rho(e^i)),\mu\big)\mid \mu\in T^* M\big\}
\subseteq \mbb{T} M$$
together with the pseudo-connection
$$\nabla\alpha=
\sum_i\rho(\tilde e_i)\d\iota_{\rho(e^i)}\alpha+\sum_i\rho(\tilde f_i) \d\iota_{\rho(f^i)}\alpha
\quad \alpha\in \Omega^1( M),$$
defines a pseudo-Dirac structure in $\mbb{T} M$ (we have implicitly used the canonical identification $W\cong T^*M$ is the formula above). In particular,  this endows $T^* M\cong W$ with a Lie-algebroid structure.

Moreover, \cref{cor:PsiMorpLieAlg} implies that the map
$$\Psi:T^*M\to (\mf{e}\times M)\times (\mf{f}\times M)$$ given by
$$\Psi:\mu\to (\rho\rvert_{\mf{f}^\perp}^*\mu,\rho\rvert_{\mf{e}^\perp}^*\mu)$$
 is a morphism of Lie algebroids over the diagonal embedding $M\to M\times M$.

When $\mf{e}=\mf{e}^\perp$ and $\mf{f}=\mf{f}^\perp$, $\tilde e_i=0$ and $\tilde f_i=0$, so $\nabla=0$. Therefore $W\subset \mbb{T}M$ is just a Dirac structure. In fact it is the graph of the Poisson structure constructed by Lu and Yakimov for the action of the Manin triple $(\mf{d},\mf{e},\mf{f})$ on $M$ \cite{Lu06,LiBland:2009ul}.

When one takes $\mf{e}=0$ and $\mf{f}=\mf{d}$, 
$$W=\{(0,\mu)\mid \mu \in T^*M\}.$$
 This shows that $T^*M$ is a bundle of Lie algebras, and $\rho^*:T^*_mM\to \mf{d}$ is a morphism of Lie algebras for every $m\in M$ - a result first proven in \cite{LiBland:2010wi}. 
\end{example}

\begin{example}[pseudo-Dirac structures on the variety of Lagrangian subalgebras]
Let $\mf{d}$ be a quadratic Lie algebra. In \cite{Evens:2001ue,Evens:2006kk} it was shown any vector space decomposition $\mf{d}=\mf{e}\oplus\mf{f}$ where $\mf{e},\mf{f}\subseteq\mf{d}$ are Lagrangian subalgebras induces a Poisson structure on the variety
$$\mc{L}_\mf{d}:=\{\mf{l}\subseteq\mf{d}\mid\mf{l} \text{ is a Lagrangian subalgebra}\},$$
of Lagrangian subalgebras of $\mf{d}$. A more general result is possible using pseudo-Dirac geometry:

Since the stabilizers of the natural $\mf{d}$ action on $\mc{L}_\mf{d}$ are coisotropic, 
$$\mbb{E}:=\mf{d}\times\mc{L}_\mf{d}$$ 
is an action Courant algebroid \cite{LiBland:2009ul}. 
Therefore \cref{ex:ActCourDec} shows that  any vector space decomposition $\mf{d}=\mf{e}\oplus\mf{f}$, where $\mf{e},\mf{f}\subseteq\mf{d}$ are subalgebras, endows $T^*\mf{L}_\mf{d}$ with the structure of a Lie algebroid. (For example, one can always take $\mf{e}=0$, $\mf{f}=\mf{d}$). In the special case where $\mf{e}$ and $\mf{f}$ are both Lagrangian, this is the Lie algebroid structure on $T^*M$ arising from Lu and Evens' Poisson structure.

\end{example}

\begin{example}[pseudo-Dirac Lie groups]
Suppose that $\mf{d}$ is a quadratic Lie algebra, and $\mf{g},\mf{h}\subseteq\mf{d}$ are two subalgebras such that $\mf{d}=\mf{g}\oplus\mf{h}$. Let $G$ and $D$ be the simply connected Lie groups integrating $\mf{g}$ and $\mf{d}$ respectively, and $$\phi:G\to D$$ the immersion of Lie groups integrating the inclusion $\mf{g}\subseteq\mf{d}$.

The quadratic Lie algebra $\overline{\mf{d}}\oplus\mf{d}$ acts on $D$ via the map $$\rho:(\xi,\eta)\to -\xi^R+\eta^L,$$ where $\xi^R,\xi^L\in\mf{X}(D)$ are the right/left invariant vector fields with value $\xi\in\mf{d}$ at the identity. This action has coisotropic stabilizers, and hence 
$$(\overline{\mf{d}}\oplus\mf{d})\times D$$
is an action Courant algebroid \cite[Example 4.2]{LiBland:2009ul}. Meanwhile \cref{ex:LieSubAct} shows that $$\big((\h\oplus\h)\times D,\d\big)$$ is a pseudo-Dirac structure, where $\d$ is the standard flat connection on the trivial bundle $(\h\oplus\h)\times D$.

%
 As shown in \cite{Alekseev:2009tg}, there is a canonical isomorphism
  $$(\overline{\mf{d}}\oplus\mf{d})\times D\cong \mbb{T}_\eta D,$$
 where $\eta=\frac{1}{12}\la[\theta^L,\theta^L],\theta^L\ra\in\Omega^3_{cl}(D)$ is the Cartan 3-form (here $\theta^L\in\Omega^1(D,\mf{d})$ is the left invariant Maurer-Cartan form).
 Recall the Courant morphism 
 $$R_{\gr(\phi)}:\mbb{T}_{\phi^*\eta}G\dasharrow \mbb{T}_\eta D$$
 described in \cref{ex:StdDiracStr}. The theory of \cref{sec:FBimage} shows that
 $$(W,\nabla):=\big((\h\oplus\h)\times D,\d\big)\circ R_{\gr(\phi)}$$ is a pseudo Dirac structure in $\mbb{T}_{\phi^*\eta}G$.
 
 In conclusion, we have constructed a pseudo-Dirac structure $(W,\nabla)$ in $\mbb{T}_{\phi^*\eta}G$, corresponding to the matched pair $\mf{d}=\g\bowtie\h$ of Lie algebras and the invariant quadratic form on $\mf{d}$. 
 
 When $(\mf{d},\g,\h)$ is a Manin triple,
  $\phi^*\eta=0$, $\nabla=0$, and $W=\gr(\pi^\sharp)$ is the graph of the Poisson structure $\pi\in\mf{X}^2(G)$, which Drinfel'd associates to the Lie bialgebra $\g,\g^*\cong\h$ \cite{Drinfeld83} (our construction of this Poisson structure is identical to the one found in \cite{LiBland:2009ul}).
 
 When only $\h\subseteq\mf{d}$ is Lagrangian then the pseudo-connection $\nabla$ is still trivial, and $W\subseteq \mbb{T}_{\phi^*\eta}G$ coincides with the Dirac Lie group structure described in \cite{LiBland:2011vqa,LiBland:2010wi} corresponding to the Dirac Manin triple $(\mf{d},\h;\g)_{\la\cdot,\cdot\ra}$.
\end{example}

\begin{appendix}
\section{Total kernels and quotients of double vector bundles}\label{app:TotQuo}

Suppose that the  map
\begin{equation}\label{eq:TotQuo}
\begin{tikzpicture}
\mmat{m1} at (-2,0){D&B\\ A& M\\};
\path[->] (m1-1-1)	edge (m1-1-2)
				edge (m1-2-1);
\path[<-] (m1-2-2)	edge (m1-1-2)
				edge (m1-2-1);
\mmat{m2} at (2,0) {Q&M\\ M& M\\};
\path[->] (m2-1-1)	edge (m2-1-2)
				edge (m2-2-1);
\path[<-] (m2-2-2)	edge node[swap]{$\on{id}$} (m2-1-2)
				edge node{$\on{id}$} (m2-2-1);
\path[->] (m1) edge node {$q$} (m2);
\draw (-2,0) node (c) {$C$};
\path[left hook->] (c) edge (m1-1-1);
\draw (2,0) node (c2) {$Q$};
\path[left hook->] (c2) edge (m2-1-1);
\end{tikzpicture}
\end{equation}
is a morphism of double vector bundles
such that the restriction of $q$ to the core $C$ of $D$ is a surjection.
Since $Q$ has a common zero section over both side bundles, it makes sense to define the \emph{total kernel} $$\on{ker}(q):=\{x\in D \mid q(x)=0\},$$
a double vector subbundle of $D$. 

On the other hand, suppose we have a double vector subbundle of $D$,
\begin{equation}\label{eq:TotKer}
\begin{tikzpicture}
\mmat{m1} at (-2,0){D'&B\\ A& M\\};
\path[->] (m1-1-1)	edge (m1-1-2)
				edge (m1-2-1);
\path[<-] (m1-2-2)	edge (m1-1-2)
				edge (m1-2-1);
\mmat{m2} at (2,0) {D&B\\ A& M\\};
\path[->] (m2-1-1)	edge (m2-1-2)
				edge (m2-2-1);
\path[<-] (m2-2-2)	edge (m2-1-2)
				edge (m2-2-1);
\draw (0,0) node {$\subseteq$} (m2);
\draw (-2,0) node (c) {$C'$};
\path[left hook->] (c) edge (m1-1-1);
\draw (2,0) node (c2) {$C$};
\path[left hook->] (c2) edge (m2-1-1);
\end{tikzpicture}
\end{equation}
%
containing both side bundles $A$ and $B$. 

Note that the abelian groupoid $C$ acts on $D$, via the formula
\begin{equation}\label{eq:coreaction}d+c:=(d+_{D/A}0_{D/A})+_{D/B}(c+_{D/A}0_{D/B})=(d+_{D/B}0_{D/B})+_{D/A}(c+_{D/B}0_{D/A}).\end{equation}
 for any $(d,c)\in D\times_M C$.
Consequently, we have a well defined map $q:D\to C/C'$, given by the condition 
\begin{equation}\label{eq:totQuotMap}q(d)=-\tilde c\Leftrightarrow d+c\in D',\end{equation} where $c\in C$ is any lift of $\tilde c\in C/C'$. The map $q:D\to C/C'$ is called the \emph{total quotient} of $D$ by $D'$. 

It is clear that these two constructions invert each other, yielding the following proposition.

\begin{proposition}\label{prop:totkerVtotquo}
The operations of taking the total kernel or the total quotient define a one-to-one correspondence between surjective morphisms of double vector bundles of the form \labelcref{eq:TotQuo} and inclusions of the form \labelcref{eq:TotKer}.
\end{proposition}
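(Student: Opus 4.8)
The plan is to recognize the two constructions as the double-vector-bundle analogues of the elementary fact that kernels of surjections correspond to subspaces of the appropriate quotient, and then to verify directly that total kernel and total quotient invert one another. The only genuine computation is checking that the total quotient respects both additive structures.

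First I would clarify the target object $Q$ in \labelcref{eq:TotQuo}: since both of its side bundles are the trivial bundle $M$, the double vector bundle $Q$ coincides with its own core, so it is simply a vector bundle over $M$, and a morphism $q\colon D\to Q$ of the form \labelcref{eq:TotQuo} automatically sends the zero sections $0_{D/A}$ and $0_{D/B}$ into the zero section of $Q$ (a morphism of vector bundles sends zero sections to zero sections, applied to each structure). Consequently the total kernel $\on{ker}(q)$ contains both side bundles $A$ and $B$, and its core is $\on{ker}(q|_C)$; since $q$ is surjective on the core, $C/\on{ker}(q|_C)\cong Q$. That $\on{ker}(q)$ is a double vector subbundle follows because it is the preimage of the zero section under a morphism of double vector bundles, hence closed under both $+_{D/A}$ and $+_{D/B}$. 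This shows the total kernel produces an inclusion of the form \labelcref{eq:TotKer}.

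Next I would identify the total quotient map \labelcref{eq:totQuotMap} with the quotient projection $\pi\colon D\to D/D'$ in the category of double vector bundles. Because $D'$ contains both side bundles, the quotient $D/D'$ has trivial side bundles and core $C/C'$, so $D/D'\cong C/C'$ is again an object occupying the corner as in \labelcref{eq:TotQuo}, and $\pi$ restricts on the core to the surjection $C\to C/C'$. The compatibility $\pi(d+c)=\pi(d)+\tilde c$ of $\pi$ with the core action \labelcref{eq:coreaction} shows that $\pi(d)=-\tilde c$ exactly when $d+c\in D'$, which is precisely \labelcref{eq:totQuotMap}; thus the total quotient is a well-defined surjective morphism of the form \labelcref{eq:TotQuo}. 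Existence of $\pi$ (equivalently, existence of a translating core element for every $d$) can be checked in a local splitting $D\cong A\times_M B\times_M C$ as in \cref{ex:DirectSumDVB}, and well-definedness of $\tilde c$ follows because $d+c_1,d+c_2\in D'$ forces $c_1-c_2\in C'$ by closure of $D'$ under the core operations.

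Finally I would verify that the two operations are mutually inverse. Starting from $D'$ and forming $\pi$, one has $\pi(d)=0$ iff $d+c\in D'$ for a lift $c$ of $0$, i.e. $c\in C'\subseteq D'$, which holds iff $d\in D'$; hence $\on{ker}(\pi)=D'$. Starting from $q$ and setting $D'=\on{ker}(q)$ with core $C'=\on{ker}(q|_C)$, the induced total quotient $q'\colon D\to C/C'$ satisfies $q'(d)=-\tilde c$ iff $q(d+c)=q(d)+q|_C(c)=0$, i.e. $q(d)=-q|_C(c)$; under the isomorphism $C/C'\cong Q$ induced by $q|_C$ this reads $q'=q$. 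The main obstacle, carried out in the second step, is the verification that $\pi$ is genuinely a morphism of double vector bundles, respecting both additions: here one uses the interchange law \labelcref{eq:DVBinterchg} together with the two equal expressions for the core action in \labelcref{eq:coreaction}, so that $\pi$ intertwines $+_{D/A}$ and $+_{D/B}$ with the single additive structure on the corner bundle $C/C'$. This step is routine but is where all the double-structure compatibilities are actually consumed.
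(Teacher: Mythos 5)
Your proof is correct and takes essentially the same route as the paper, which simply asserts that the two constructions are mutually inverse; you supply the routine verifications (the kernel contains both side bundles, the translating core element exists and is unique modulo $C'$, and the quotient map respects both additions via the interchange law) that the paper leaves implicit.
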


\begin{remark}\label{rem:DblQuot}
Alternatively, one may understand the map $q:D\to C/C'$ as taking the double quotient (in both the vertical and horizontal directions) of the double vector bundle $D$ by $D'$. Indeed, one should interpret the right hand side of \cref{eq:totQuotMap} as saying that $d$ and $-c$ differ by an element of $D'$, and hence project to the same element, $-\tilde c$, under the double quotient.
%
\end{remark}

\end{appendix}

\bibliography{basicbib}{}
\bibliographystyle{plain}
\end{document}